\documentclass[a4paper,10pt]{amsart}
\usepackage{amssymb,amsfonts,amsthm}
\usepackage{amstext}
\usepackage{comment}
\usepackage{amsmath}
\usepackage{mathtools}
\usepackage{color}
\usepackage[active]{srcltx}
\usepackage{ dsfont }
\usepackage[latin1]{inputenc}
\usepackage{tensor}
\usepackage{graphicx}

\usepackage{chngcntr} 
\usepackage{mathrsfs} 
\usepackage{hyperref}  

\DeclareMathOperator{\shp}{sh}

\theoremstyle{plain}
\newtheorem{theorem}{Theorem}[section]
\newtheorem{lemma}[theorem]{Lemma}
\newtheorem{remark}[theorem]{Remark}
\newtheorem{definition}[theorem]{Definition}

\newtheorem{corollary}[theorem]{Corollary}

\begin{document}
	
	\title{On symmetric partition lattices and probability interactions}

	\begin{abstract}
	The M\"obius function of a partition lattice can be used to define interactions of probability measures. We call a lower set a symmetric partition lattice when it is invariant under permutations of the variables. We characterize these lattices through integer partitions and obtain a recursion for their M\"obius coefficients at the top element. By constructing probability measures whose partition products are linearly independent, we show that permutation invariance of an interaction is equivalent to that of its underlying lower set, provided the coordinate spaces are sufficiently large. We also characterize the partitions on a lower set that force an interaction to vanish for every probability measure, and construct indecomposable distributions with zero interaction. We define the order of a symmetric partition lattice and relate it to vanishing marginals. We study generalized Lancaster and define the Streitberg  and size-limited partition lattices, obtaining explicit M\"obius coefficients for them. Finally, we express interaction equations in terms of characteristic functions. For Gaussian distributions, we show that an interaction vanishes exactly when the distribution factorizes according to a nontrivial partition in the underlying lower set. We also give a nonvanishing criterion for radial characteristic functions generated by Bernstein functions.
	\end{abstract}

	\keywords{Partition lattices; Probability interactions; M\"obius inversion; Characteristic functions; Identifiability}

	\subjclass[2020]{62H20; 06A07; 60E10; 11A25; 62F01}
	
	\author{Jean Carlo Guella}
	\address{	Universidade Estadual de Mato Grosso do Sul, Nova Andradina, Brazil}
	\email{jean.guella@uems.br} 	
	\maketitle

	\tableofcontents

	\section{Introduction}
	
	Independence is a central structural assumption in probability and statistics. For a probability measure $P$ on a product space $\mathds{X}_{n}:=X_{1}\times\ldots\times X_{n}$, joint independence is expressed by the factorization $P=P_{1}\times\ldots\times P_{n}$. For two variables, this identity underlies dependence measures such as distance covariance \cite{Szekely2007,Szekely2009,Bakirov2006,Janson2021} and the Hilbert-Schmidt Independence Criterion (HSIC) \cite{Gretton2005,Gretton2008}. For more than two variables, dHSIC tests joint independence \cite{Pfister2018}.
	
	However, for $n\geq3$, the failure of joint independence does not by itself describe how the variables interact. A distribution may factorize according to a nontrivial partition of the coordinates without being jointly independent. Probability interactions, such as those of \cite{lancaster1969chi} and \cite{streitberg1990lancaster}, provide a way to study these factorizations. The Streitberg interaction vanishes whenever $P=P_{\pi}$ for a partition $\pi\neq\hat{\mathbf{1}}_{n}$, whereas the Lancaster interaction vanishes when such a partition contains a singleton block (that is, if at least  one of the variables is independent from all the others). These interactions have been studied using kernel methods \cite{NIPS2013_076a0c97} and distance multivariance \cite{Boettcher2018,Boettcher2019,Chakraborty2019}. More recently, partition lattices have provided a framework for studying higher-order interactions and associated statistical measures \cite{NEURIPS2023_74f11936,pmlr-v258-liu25f}.
	
	A natural requirement is permutation invariance: the interaction should not depend on the labeling of the variables. We define symmetric partition lattices as lower sets that are invariant under permutations and characterize them in terms of integer partitions. This characterization also gives a recursion for their M\"obius coefficients  which depend only on the size of its blocks.
	
	We first examine how the interaction determines its underlying lower set. We construct probability measures for which  its partitions are linearly independent, thus an identifiable family. Using these measures, we prove that permutation invariance of the interaction is equivalent to invariance of the lower set, provided each coordinate space contains at least $n$ points. We also show that $T[P_{\sigma}]=0$ for every probability measure $P$ precisely when $\sigma\in T\setminus\{\hat{\mathbf{1}}_{n}\}$. In contrast, we construct indecomposable distributions with zero interaction whenever $n\geq3$ and the lower set contains all partitions with $n-1$ blocks, generalizing the result of  \cite{NIPS2013_076a0c97}  proved for the Lancaster interaction of three variables.
	
	We also define the order of a symmetric partition lattice and relate it to the marginals that vanish for every associated interaction. Together with a representation of products of signed measures, this gives an extension of results in \cite{Guella2025a} that  symmetric partition lattices of the same order define the same nonnegativity conditions for the radial kernels  in all Euclidean dimensions.
	
	We study and develop three families of examples: generalized Lancaster lattices $\Lambda_{k}^{n}$, generalized Streitberg lattices $\Sigma_{k}^{n}$, and size-limited lattices $J_{k}^{n}$. We give a lattice interpretation of the generalized Lancaster interaction introduced in \cite{Guella2025a}, define and  express the M\"obius coefficients of $\Sigma_{k}^{n}$ using Stirling numbers, and obtain generating functions and marginalization formulas for the size-limited family with $k=3$.
	
	Finally, the Fourier transform converts the condition $T[P]=0$ into a functional equation for the characteristic function of $P$. For Gaussian distributions, we show that the interaction vanishes if and only if the distribution factorizes according to a nontrivial partition in $T$. We also obtain a sufficient condition for nonvanishing and apply it to radial characteristic functions generated by Bernstein functions.
	
	The paper is organized as follows. Section \ref{posetsandlattices} reviews the necessary facts about posets and partition lattices. Section \ref{symmetricpartitionlattices} develops the general results on identifiability, symmetry, and order. Sections \ref{GeneralizedLancasterinteraction}, \ref{GeneralizedStreitberginteraction}, and \ref{sizelimited} study the three families of examples. Section \ref{MultivariateGaussian} treats characteristic functions, radial functions, and Gaussian distributions.
	
	\section{Posets and lattices}\label{posetsandlattices}
	
	A finite partially ordered set, also called a poset, is a finite set $\mathcal{X}$ equipped with a partial order $\leq$.
	For the posets considered here, we assume that each has a maximum and a minimum element, which are denoted by $\hat{\mathbf{1}}$ and $\hat{\mathbf{0}}$. 
	
	If, in a poset $\mathcal{X}$, any two elements $x,y$ have a unique supremum $x\vee y$ (called the join) and a unique infimum $x\wedge y$ (called the meet), the poset is called a lattice.
	
	On a lattice $\mathcal{X}$, let  $\zeta:\mathcal{X}\times \mathcal{X}\to\mathbb{Z}$ be the function defined as
	$\zeta(x, y) = 1$ if $x\leq y$ and 0 otherwise,  called the zeta function of $\mathcal{X}$. This function has an inverse (in the matrix sense), known as the \emph{M\"obius function}   $\mu:\mathcal{X}\times \mathcal{X}\to\mathbb{Z}$, and it can be proved that it satisfies (see  Chapter 3 of \cite{Stanley_2011})
	\begin{equation}\label{posetdefn}
		\mu(x,x)=1,\qquad \mu(x,y)= -\sum_{x\leq z<y}\mu(x,z)\quad  \text{ when } x <y,
	\end{equation}
	and  $\mu(x,y)=0$ otherwise. Note that if $\hat{\mathbf{0}}\neq \hat{\mathbf{1}}$, then
	\begin{equation}\label{soma01}
		\sum_{x \in \mathcal{X}}\mu(x,\hat{\mathbf{1}} )= \sum_{x \in \mathcal{X}}\zeta(\hat{\mathbf{0}},x)\mu(x,\hat{\mathbf{1}} )= \delta_{\hat{\mathbf{0}},\hat{\mathbf{1}}}=0,
	\end{equation}
	more generally, if $ \pi \neq \hat{\mathbf{1}}$, then
	\begin{equation}\label{somax1}
		\sum_{x \geq \pi }\mu(x,\hat{\mathbf{1}} )= \sum_{x \in \mathcal{X}}\zeta(\pi,x)\mu(x,\hat{\mathbf{1}} )= \delta_{\pi,\hat{\mathbf{1}}}=0.
	\end{equation}
	
	\textbf{Streitberg lattice:} A partition $\pi$ of $\{1,\ldots,n\}$ is an unordered collection of nonempty disjoint subsets $(F_{1},\ldots,F_{\ell})$ whose union is the entire set.  In order to simplify the notation, throughout the text we always assume that $|F_{1}| \geq \ldots \geq |F_{\ell}|$, when needed, a different enumeration is stated explicitly.  We use $|\pi|=\ell$ for the number of blocks and $\Pi_{n}$ for the set of these partitions. The minimum and maximum elements are $\hat{\mathbf0}_{n}:=(\{1\},\ldots,\{n\})$ and $\hat{\mathbf1}_{n}:=\{1,\ldots,n\}$.
	
	We can define a lattice structure on $\Pi_{n}$ as follows. For $\pi=(F_{1}, \ldots, F_{|\pi|}),\sigma=(G_{1}, \ldots, G_{|\sigma|}) \in\Pi_{n}$ we write $\pi\leq \sigma$ if for every $1\leq i \leq |\pi|$ we have $F_{i} \subseteq G_{j}$ for some $1 \leq j \leq |\sigma|$. In particular, the \emph{meet} $\pi\wedge\sigma$ is the partition whose blocks are the nonempty intersections $B\cap C$ with $B\in\pi$ and $C\in\sigma$, and the \emph{join} $\pi\vee\sigma$ is the finest partition that is coarser than both, obtained by taking the transitive closure of the relation ``two elements lie in the same block of $\pi$ or of $\sigma$''.  The M\"obius function of $\Pi_{n}$ is
	\[
	\mu(\sigma,\pi)=
	\begin{cases}
		\prod_{B\in\pi} (-1)^{m_{B}-1}(m_{B}-1)!,
		& \text{if } \sigma \leq \pi,\\
		0, & \text{otherwise},
	\end{cases}
	\]
	where $m_{B}:=\#\{A\in\sigma:A\subseteq B\}$. To use this lattice in probability, consider the Cartesian product $\mathds X_{n}:=\prod_{i=1}^{n}X_{i}$. Let $\mathcal M(\mathds X_{n})$ denote the vector space of discrete signed measures on $\mathds{X}_{n}$. For a probability measure $P\in\mathcal M(\mathds X_{n})$ and $\pi=(F_{1},\ldots,F_{\ell})\in\Pi_{n}$, define $P_{\pi}:=\bigtimes_{i=1}^{\ell}P_{F_{i}}$, where $P_{F}$ is the marginal on $\mathds X_{F}:=\prod_{i\in F}X_{i}$.  A probability measure $P$ is called decomposable if $P=P_{\pi}$ for some $\pi\in\Pi_{n}\setminus\{\hat{\mathbf1}_{n}\}$.
	
	The lattice and the M\"obius function of $\Pi_{n}$ satisfy the following probabilistic characterization obtained in   \cite{streitberg1990lancaster}.
	
	\begin{theorem}\label{streit}The collection of real numbers $a_{\pi}:= (-1)^{|\pi|-1}(|\pi|-1)!$, indexed over the partitions of $\{1, \ldots, n\}$, is the only one that satisfies the following conditions on arbitrary product spaces:
		\begin{enumerate}
			\item[$(i)$] $a_{\hat{\mathbf{1}}_{n}}=1$
			\item[$(ii)$] For any decomposable probability $P$ defined on the Cartesian product $\mathds{X}_{n}$ the measure
			\[
			\Sigma[P] := \sum_{\pi \in \Pi_{n}} a_{\pi} P_{\pi}
			\]
			is the zero measure.
			\item[$(iii)$] The operator $\Sigma$ is invariant under permutations of the coordinates: if   $\tau:\{1, \ldots, n\} \to\{1, \ldots, n\}$ is a bijection, then
			\[
			\Sigma[P^{\tau}]= [\Sigma[P]]^{\tau}
			\]
			where for a signed measure $\eta$ on $\mathds{X}_{n}$ the measure $\eta^{\tau}$ is defined on $\prod_{i=1}^{n}X_{\tau(i)}$ for measurable subsets $A_{j} \subseteq X_{j}$ as $\eta^{\tau}(\prod_{i=1}^{n}A_{\tau(i)} ):=\eta (\prod_{i=1}^{n}A_{i} ) $.
		\end{enumerate}
		
	\end{theorem}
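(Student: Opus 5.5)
We prove existence and uniqueness separately.

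\emph{Existence.} Take $a_{\pi}=(-1)^{|\pi|-1}(|\pi|-1)!=\mu(\pi,\hat{\mathbf1}_{n})$ in $\Pi_{n}$. Condition $(i)$ is immediate. Condition $(iii)$ holds because $a_{\pi}$ depends only on $|\pi|$. Indeed, $(P^{\tau})_{\pi}=(P_{\tau\pi})^{\tau}$, and $|\tau\pi|=|\pi|$, so reindexing the sum over $\pi$ gives $\Sigma[P^{\tau}]=[\Sigma[P]]^{\tau}$.

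For $(ii)$, suppose $P=P_{\sigma}$ with $\sigma\neq\hat{\mathbf1}_{n}$. The key identity is $(P_{\sigma})_{\pi}=P_{\pi\wedge\sigma}$. To see this, note that the marginal of $P_{\sigma}$ on a block $F$ of $\pi$ is the product of the marginals $P_{F\cap G}$ over the blocks $G$ of $\sigma$, since the factors for distinct blocks of $\sigma$ are independent. Hence
\[
\Sigma[P]=\sum_{\rho\leq\sigma}P_{\rho}\sum_{\pi:\,\pi\wedge\sigma=\rho}\mu(\pi,\hat{\mathbf1}_{n}).
\]
By Weisner's theorem (dual form), for $\sigma<\hat{\mathbf1}_{n}$ we have $\sum_{\pi\wedge\sigma=\rho}\mu(\pi,\hat{\mathbf1})=0$ for every $\rho$. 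Alternatively, one can compute the inner sum as the M\"obius convolution $\sum_{\pi\geq\rho'}\mu(\pi,\hat{\mathbf1})$ restricted to an interval, reducing it to \eqref{somax1}. In either case $\Sigma[P]=0$.

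\emph{Uniqueness.} Suppose $(a_{\pi})$ satisfies $(i)$–$(iii)$. By $(iii)$, used on product spaces with large enough $X_{i}$, the coefficient $a_{\pi}$ must depend only on the integer partition (block sizes) of $\pi$. Symmetry of the operator forces this once the $P_{\pi}$ are linearly independent for some $P$.

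Fix $\sigma\neq\hat{\mathbf1}_{n}$ and choose $P=P_{\sigma}$, where each block marginal $P_{G}$ is a "generic" measure, for instance a mixture making all further sub-products linearly independent. By the identity above, $(ii)$ becomes
\[
\sum_{\rho\leq\sigma}P_{\rho}\sum_{\pi\wedge\sigma=\rho}a_{\pi}=0.
\]
Linear independence of $\{P_{\rho}:\rho\leq\sigma\}$ for this generic choice then yields
\[
\sum_{\pi\wedge\sigma=\rho}a_{\pi}=0\quad\text{for all }\rho\leq\sigma.
\]
Taking $\rho=\sigma$ gives $\sum_{\pi\geq\sigma}a_{\pi}=0$ for all $\sigma\neq\hat{\mathbf1}_{n}$. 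Together with $a_{\hat{\mathbf1}}=1$, this is exactly the recursion \eqref{posetdefn} characterizing $\mu(\cdot,\hat{\mathbf1}_{n})$, run downward from the top. So $a_{\pi}=\mu(\pi,\hat{\mathbf1}_{n})$.

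\emph{Main obstacle.} The hard step is constructing $P$ with $\{P_{\rho}\}_{\rho\leq\sigma}$ linearly independent. Once the blocks of $\sigma$ are fixed, it suffices to treat the case $\sigma=\hat{\mathbf1}$ on each block. I would try spaces $X_{i}$ with at least $n$ points and take $P$ to be the uniform measure on the "diagonal" points $(j,j,\dots,j)$, $j=1,\dots,n$, perturbed generically. One then shows the $P_{\rho}$ have distinct supports or leading terms, ordered by refinement. An alternative route is via a triangularity argument on the point masses that $P_{\rho}$ assigns to appropriately chosen points.
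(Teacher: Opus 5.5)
Your proof is correct, and it follows the route the paper's own machinery gives. The paper itself quotes this theorem from Streitberg without proof. Your existence argument for $(ii)$ is Lemma \ref{iducedzero} with $T=\Pi_{n}$, i.e.\ Theorem \ref{generalized_orthogonality} applied to $f(\rho)=P_{\rho}$. Your uniqueness argument is the coefficient-isolation step of Corollary \ref{iducedzero2}: apply $(ii)$ to $P_{\sigma}$ for a single marginally identifiable $P$, which replaces your blockwise generic construction; note that condition $(iii)$ is never actually used there. Your ``main obstacle'' is Lemma \ref{marginalidentifiability}, and the unperturbed uniform measure on $p\geq n$ diagonal points already works: $P_{\rho}(\{x_{\beta}\})=p^{-|\rho|}\zeta(\rho,\pi_{\beta})$, every partition occurs as some $\pi_{\beta}$, and invertibility of $\zeta$ gives linear independence, which distinct supports alone would not.
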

	
	As the previous theorem already suggests, the values of $\mu(\sigma, \hat{\mathbf{1}}_{n})$ are the most important in practice.  It may happen that  $\Sigma[P]=0$ even if $P$ is not decomposable, see Lemma \ref{counter}  for an example.
	
	Relation $(iii)$ in Theorem \ref{streit} is the core idea for the definition of symmetric partition lattices obtained in Theorem \ref{charsymmetricpartitionposet}. Also, relation $(ii)$ is adapted to the context of lower sets in Lemma \ref{iducedzero}.
	
	\textbf{Lancaster lattice:} Another well-known lattice, introduced in \cite{lancaster1969chi}, is $\Lambda_{n}^{n}\subseteq\Pi_{n}$, where $\pi\in\Lambda_{n}^{n}$ if and only if it contains a singleton block or is the maximum element $\hat{\mathbf1}_{n}$. The order is inherited from $\Pi_{n}$, so $(\sigma\wedge\pi)_{\Lambda_{n}^{n}}=(\sigma\wedge\pi)_{\Pi_{n}}$. The join $(\sigma\vee\pi)_{\Lambda_{n}^{n}}$ equals $(\sigma\vee\pi)_{\Pi_{n}}$ when the latter belongs to $\Lambda_{n}^{n}$, otherwise, it equals $\hat{\mathbf1}_{n}$. For $\pi=(F_{1},\ldots,F_{\ell})\in\Lambda_{n}^{n}$, its M\"obius function satisfies
	\[
	\mu(\pi, \hat{\mathbf{1}}_{n})=
	\begin{cases}
		1, & \text{ if } \pi= \hat{\mathbf{1}}_{n}\\
		0,& \text{ if } |F_{2}| \geq 2,\\
		(-1)^{n-|F_{1}|}, & \text{ if } |F_{2}|=1 \text{ and } |F_{1}|\geq 2,\\
		(-1)^{n-1}(n-1) & \text{ if } \pi= \hat{\mathbf{0}}_{n}.
	\end{cases}
	\]
	As with the Streitberg lattice, we can define the probability interaction of a probability measure $P$ on $\mathds{X}_{n}$ as
	\[
	\Lambda_{n}^{n}[P]:= \sum_{\pi \in \Lambda_{n}^{n}}\mu(\pi, \hat{\mathbf{1}}_{n})P_{\pi} = \sum_{\ell=0}^{n}(-1)^{n-\ell} \sum_{|F|=\ell}P_{F}\times \left [\bigtimes_{i \in F^{c}}P_{i} \right ],
	\]
	because $P_{\emptyset}:=1$ and
	\[
	\sum_{\ell=0}^{1}(-1)^{n-\ell} \sum_{|F|=\ell}P_{F}\times \left [\bigtimes_{i \in F^{c}}P_{i} \right ]= (-1)^{n-1}(n-1)\bigtimes_{i =1}^{n}P_{i}.
	\]
	
	\section{Symmetric partition lattices}\label{symmetricpartitionlattices}
	
	In this section we aim to  formally define what a probability interaction based on the concept of a lower set of $\Pi_{n}$ is and the concept of a symmetric partition lattice of $\Pi_{n}$. 
	
	First, we note that relation $(ii)$ in Theorem \ref{streit} is part of a broader result in lattice theory which is also a generalization of Weisner's Theorem, see Corollary 3.9.3 in \cite{Stanley_2011}.
	
	\begin{theorem} \label{generalized_orthogonality}
		For any function $f : \mathcal{X} \to \mathbb{R}$ and any element $\sigma \in \mathcal{X} \setminus \{\hat{\mathbf{1}}\}$, the following   identity holds:
		\[
		\sum_{\pi \in \mathcal{X}} \mu(\pi, \hat{\mathbf{1}}) f(\pi \wedge \sigma) = 0.
		\]
	\end{theorem}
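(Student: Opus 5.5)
We prove the identity by grouping the terms of the sum according to the value of the meet $\pi\wedge\sigma$, and then reducing everything to the orthogonality relation \eqref{somax1}. Here $\mathcal{X}$ is a finite lattice with $\hat{\mathbf{0}}$ and $\hat{\mathbf{1}}$, and $\mu$ is its M\"obius function. The proof is a M\"obius inversion argument in the style of Weisner's Theorem.

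\textbf{Step 1 (reduction).} Write $g(\tau):=\sum_{\rho\leq\tau}\mu(\rho,\tau)f(\rho)$. M\"obius inversion on $\mathcal{X}$, i.e.\ the fact that $\mu$ is the inverse of $\zeta$, then gives $f(x)=\sum_{\tau\leq x} g(\tau)$ for every $x\in\mathcal{X}$. Substituting $x=\pi\wedge\sigma$, we get
\[
\sum_{\pi \in \mathcal{X}} \mu(\pi, \hat{\mathbf{1}}) f(\pi \wedge \sigma)=\sum_{\tau\in\mathcal{X}} g(\tau)\sum_{\pi:\ \tau\leq \pi\wedge\sigma}\mu(\pi,\hat{\mathbf{1}}).
\]
Since $\pi\wedge\sigma$ is the greatest lower bound, the condition $\tau\leq \pi\wedge\sigma$ is equivalent to the two conditions $\tau\leq\sigma$ and $\tau\leq\pi$. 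The right-hand side therefore equals
\[
\sum_{\tau\leq\sigma} g(\tau)\sum_{\pi\geq\tau}\mu(\pi,\hat{\mathbf{1}}).
\]

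\textbf{Step 2 (vanishing).} For each $\tau\leq\sigma$ we have $\tau\neq\hat{\mathbf{1}}$. Indeed, $\tau=\hat{\mathbf{1}}$ would force $\sigma=\hat{\mathbf{1}}$, contrary to the hypothesis. By \eqref{somax1}, the inner sum $\sum_{\pi\geq\tau}\mu(\pi,\hat{\mathbf{1}})$ is therefore $\delta_{\tau,\hat{\mathbf{1}}}=0$. Every term in Step 1 vanishes, so the whole sum is $0$.

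\textbf{Main point.} The only step requiring care is the interchange in Step 1. It uses exactly two facts: the lattice property, namely that the meet exists and $\tau\leq\pi\wedge\sigma$ holds iff $\tau\leq\pi$ and $\tau\leq\sigma$; and the finiteness of $\mathcal{X}$, which makes the rearrangement of sums trivially valid. No further obstacle is expected.
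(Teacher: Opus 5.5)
Your proof is correct. The expansion $f(x)=\sum_{\tau\le x}g(\tau)$ is the identity $\mu\zeta=I$, and the interchange of the finite sums is harmless. By the definition of the meet, $\tau\le\pi\wedge\sigma$ is equivalent to $\tau\le\pi$ and $\tau\le\sigma$. Each inner sum then vanishes by Equation~\ref{somax1}, because $\tau\le\sigma<\hat{\mathbf{1}}$.

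The paper gives no proof of this statement; it only points to Weisner's theorem (Corollary 3.9.3 in Stanley). The route that pointer suggests is the one your opening sentence announces, but your argument does not actually follow it. That route groups the terms by the value $\tau=\pi\wedge\sigma\le\sigma$ and rewrites the sum as $\sum_{\tau\le\sigma}f(\tau)\sum_{\pi:\,\pi\wedge\sigma=\tau}\mu(\pi,\hat{\mathbf{1}})$. Each inner sum is then zero by Weisner's identity $\sum_{x\wedge a=\hat{\mathbf{0}}}\mu(x,\hat{\mathbf{1}})=0$ (for $a\neq\hat{\mathbf{1}}$), applied in the interval $[\tau,\hat{\mathbf{1}}]$ with $a=\sigma$. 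This uses Weisner as a black box, together with the fact that the M\"obius function of an interval is the restriction of $\mu$.

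Your argument instead expands $f$ by M\"obius inversion. It is the coordinate form of the M\"obius-algebra proof of Weisner's theorem. There the elements $e_{\tau}:=\sum_{\rho\le\tau}\mu(\rho,\tau)\rho$ are orthogonal idempotents for $\wedge$ with $x=\sum_{\tau\le x}e_{\tau}$, and your $g(\tau)$ is $f$ extended linearly and evaluated at $e_{\tau}$.

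This buys three things:
\begin{itemize}
\item It is self-contained.
\item It contains Weisner's theorem as the case where $f$ is the indicator of $\{\hat{\mathbf{0}}\}$.
\item It uses only the meet and the inverse relation between $\zeta$ and $\mu$. So it applies verbatim to a lower set $S$ with its own M\"obius function $\mu_S$, since the meet in $S$ coincides with that of $\mathcal{X}$. This justifies Equation~\ref{generalized_orthogonalityS}, which the paper states without proof.
\end{itemize}
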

	
	Motivated by the usual notion of a lower set \cite[Chapter 3]{Stanley_2011}, we use the following convention, in which the top element is adjoined.
	\begin{definition}
		A subset $S\subseteq\mathcal X$ is called a lower set in this paper if it contains $\hat{\mathbf0}$ and $\hat{\mathbf1}$ and satisfies the following condition: for every $w\in S\setminus\{\hat{\mathbf1}\}$ and $z\in\mathcal X$, if $z\leq w$, then $z\in S$.
	\end{definition}
	
	For a nonempty subset $F \subset \mathcal{X}\setminus{\{\hat{\mathbf{1}}\}}$ we may define the lattice generated by $F$ as
	\begin{equation}\label{latticegenerated}
		S_{F}:=\{ x \in \mathcal{X}, \quad  x \leq y \text{ for some } y \in F\} \cup \{\hat{\mathbf{1}} \},
	\end{equation}
	which is  a lower set. Conversely, for a lower set  $S \subset \mathcal{X}$, we define its set of maximal  elements by
	\[
	\max(S):= \{ x \in S\setminus{ \{\hat{\mathbf{1}}\}}, \quad \text{ if } y \in S \text{ and }  y> x \text{ then } y=\hat{\mathbf{1}}  \},
	\]
	and clearly we have that $S = S_{\max(S)}$.	If   $S \subseteq \mathcal{X}$ is a lower set,  we can also define a concept of join on $S$ based on that of $\mathcal{X}$, by setting
	\[
	x\vee_{S} y =
	\begin{cases}
		x\vee_{\mathcal{X}} y & \text{ if } 	x\vee_{\mathcal{X}} y \in S. \\
		\hat{\mathbf{1}} & \text{ otherwise}.
	\end{cases}
	\]
	It is a simple exercise to show that with  this definition of join, the poset $S$ is a lattice. In particular, Theorem \ref{generalized_orthogonality} holds when restricted to $S$, in the sense that for any function $f : S \to \mathbb{R}$ and any element $\sigma \in S \setminus \{\hat{\mathbf{1}}\}$, the following   identity holds
	\begin{equation}\label{generalized_orthogonalityS}
		\sum_{\pi \in S} \mu_{S}(\pi, \hat{\mathbf{1}}) f(\pi \wedge \sigma) = 0.
	\end{equation}
	
	A useful technique that leads to proofs by induction on a finite lattice $\mathcal{X}$ is the concept of level sets.  We define $I_{1} := \{\hat{\mathbf{1}}\}$, and inductively define the nonempty sets:
	\[
	I_{k+1} := \left\{ x \in \mathcal{X} \setminus \bigcup_{i=1}^{k} I_{i} \;\middle|\; \text{if } y \in \mathcal{X} \setminus \bigcup_{i=1}^{k} I_{i} \text{ and } y \geq x \text{ then } y = x \right\}.
	\]
	Because $\mathcal{X}$ is finite, this recursion terminates at an integer $h$, where $I_{h} = \{\hat{\mathbf{0}}\}$. This value $h$ is defined as the  height of the lattice $\mathcal{X}$ (the maximum cardinality of a chain). Note that the level sets are disjoint and if $x \in I_{i}$ and $y \in I_{j}$ with $ x < y $  then $j< i$. 
	
	\begin{remark} \label{maiorelementosubposet}
		It can be proved using induction on the level sets  that if  $\mathcal{X}$ is a finite lattice  and  $S \subseteq \mathcal{X}$ is a lower set, then the M\"obius function of $S$ is identical to the M\"obius function of $\mathcal{X}$ strictly below the top element:
		\[
		\mu_{S}(x, y) = \mu_{\mathcal{X}}(x, y),  \quad  \text{for any } x,y \in S \text{ with } y < \hat{\mathbf{1}}.
		\]
	\end{remark}
	
	From now on, our focus is on lower sets of the lattice $\Pi_{n}$.  We use the notation $\hat{\mathbf{1}}_{n}$ for the one-block partition $\{1,\ldots, n\}$ (the largest element of $\Pi_{n}$) to simplify some expressions. If $T \subset \Pi_{n}$ is a lower set  and  $P \in \mathcal{M}(\mathds{X}_{n})$ is a probability measure, we define the interaction $T[P]$ by the signed measure
	\begin{equation}\label{interaction}
		T[P]  :=  \sum_{\pi \in T}\mu(\pi, \hat{\mathbf{1}}_{n})P_{\pi}.
	\end{equation}
	
	\begin{lemma}\label{iducedzero}Let $T \subset \Pi_{n}$ be a lower set  and  $P \in \mathcal{M}(\mathds{X}_{n})$ be a probability.  If $\sigma \in T \setminus\{\hat{\mathbf{1}}_{n}\}$   then $T[P_{\sigma}]=0$.	\end{lemma}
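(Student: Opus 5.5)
The plan is to reduce the statement to the orthogonality identity \eqref{generalized_orthogonalityS} on the lattice $T$. The key observation is that forming partition products commutes with meets: for a probability $P$ on $\mathds{X}_{n}$ and $\sigma,\pi\in\Pi_{n}$,
\[
(P_{\sigma})_{\pi}=P_{\pi\wedge\sigma}.
\]
Granting this, we get
\[
T[P_{\sigma}]=\sum_{\pi\in T}\mu(\pi,\hat{\mathbf{1}}_{n})(P_{\sigma})_{\pi}=\sum_{\pi\in T}\mu(\pi,\hat{\mathbf{1}}_{n})P_{\pi\wedge\sigma}.
\]
Here $\mu(\pi,\hat{\mathbf{1}}_{n})$ is understood as the M\"obius function of $T$, as in \eqref{interaction}.

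Since $\sigma\in T\setminus\{\hat{\mathbf{1}}_{n}\}$ and $T$ is a lower set, $\pi\wedge\sigma\leq\sigma$ lies in $T$ for every $\pi$. Moreover, the meet in $T$ agrees with the meet in $\Pi_{n}$. Hence $f(\rho):=P_{\rho}$ is a function on $T$ and \eqref{generalized_orthogonalityS} applies. That identity is stated for real-valued $f$, while ours takes values in $\mathcal{M}(\mathds{X}_{n})$. We handle this by evaluating at an arbitrary measurable rectangle (or point, since the measures are discrete) $A\subseteq\mathds{X}_{n}$ and setting $f(\rho):=P_{\rho}(A)$, which is real-valued. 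This gives $T[P_{\sigma}](A)=0$ for all $A$, so $T[P_{\sigma}]=0$.

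The main step is verifying $(P_{\sigma})_{\pi}=P_{\pi\wedge\sigma}$, which is a two-level marginal computation. Write $\sigma=(G_{1},\dots,G_{m})$ and take a block $F$ of $\pi$. The marginal of the product measure $P_{\sigma}=\bigtimes_{j}P_{G_{j}}$ on the coordinates $F$ is the product over $j$ of the marginals of $P_{G_{j}}$ on $F\cap G_{j}$, because marginals of product measures factor over the blocks. The marginal of $P_{G_{j}}$ on $F\cap G_{j}$ is just $P_{F\cap G_{j}}$, since marginalization is transitive. Empty intersections contribute the factor $P_{\emptyset}=1$. Thus
\[
(P_{\sigma})_{F}=\bigtimes_{j:\,F\cap G_{j}\neq\emptyset}P_{F\cap G_{j}}.
\]
Taking the product over the blocks $F$ of $\pi$ yields the product of $P_{B}$ over all nonempty $B=F\cap G_{j}$, which is exactly $P_{\pi\wedge\sigma}$ by the description of the meet in $\Pi_{n}$. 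This requires only bookkeeping of coordinate orderings, and I expect no real obstacle beyond stating it carefully.
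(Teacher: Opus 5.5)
Your proposal is correct and matches the paper's proof: the paper also writes $T[P_{\sigma}]=\sum_{\pi\in T}\mu(\pi,\hat{\mathbf{1}}_{n})P_{\sigma\wedge\pi}$ and concludes with Equation \ref{generalized_orthogonalityS}. Your check that $(P_{\sigma})_{\pi}=P_{\pi\wedge\sigma}$, and your reduction to a real-valued $f$ by evaluating both sides on a fixed set $A$, fill in details the paper leaves implicit.
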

	
	\begin{proof}
		This follows from Equation \ref{generalized_orthogonalityS}:
		\[
		T[P_{\sigma}]=  \sum_{\pi \in T}\mu(\pi, \hat{\mathbf{1}}_{n})(P_{\sigma})_{\pi} =  \sum_{\pi \in T}\mu(\pi, \hat{\mathbf{1}}_{n})P_{\sigma \wedge \pi} =0.
		\]\end{proof}
	
	A family of probability measures generates identifiable finite mixtures if and only if it is linearly independent. For example, the family of distinct nondegenerate Gaussian measures $N(\omega,\Sigma)$ is linearly independent \cite{10.1214/aoms/1177698520}.
	
	For the purposes of this paper, we call a probability measure $P\in\mathcal M(\mathds X_{n})$ marginally identifiable if the indexed family $(P_{\pi})_{\pi\in\Pi_{n}}$ is linearly independent. The next lemma shows that such measures exist whenever each coordinate space contains at least $n$ points. We then use them to prove a converse of Lemma \ref{iducedzero}.
	
	\begin{lemma}\label{marginalidentifiability}
		Let $p\geq n$ and $x_{i}=(x_{1}^{i},\ldots,x_{n}^{i})\in\mathds X_{n}$, $1\leq i\leq p$, be such that, for every coordinate $j$, the points $x_{j}^{1},\ldots,x_{j}^{p}$ are pairwise distinct. Then $P:=p^{-1}\sum_{i=1}^{p}\delta_{x_{i}}$ is marginally identifiable.
	\end{lemma}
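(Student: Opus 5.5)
The plan is to evaluate a hypothetical vanishing linear combination $\sum_{\pi\in\Pi_{n}}c_{\pi}P_{\pi}=0$ on a well-chosen family of atoms. This turns linear independence into the invertibility of the zeta matrix of $\Pi_{n}$.

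\textbf{Step 1: coordinates of the atoms.} For a map $g:\{1,\ldots,n\}\to\{1,\ldots,p\}$ let $y_{g}:=(x_{1}^{g(1)},\ldots,x_{n}^{g(n)})\in\mathds X_{n}$. Since for each coordinate $j$ the values $x_{j}^{1},\ldots,x_{j}^{p}$ are pairwise distinct, the point $y_{g}$ determines $g$ uniquely. Let $\ker g\in\Pi_{n}$ denote the partition of $\{1,\ldots,n\}$ into the nonempty fibres of $g$.

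\textbf{Step 2: the masses $P_{\pi}(\{y_{g}\})$.} Fix $\pi=(F_{1},\ldots,F_{\ell})$. Each marginal is $P_{F}=p^{-1}\sum_{i=1}^{p}\delta_{(x_{j}^{i})_{j\in F}}$, so the product expands as
\[
P_{\pi}=p^{-\ell}\sum_{h:\{F_{1},\ldots,F_{\ell}\}\to\{1,\ldots,p\}}\delta_{z_{h}},
\]
where $z_{h}$ has $j$-th coordinate $x_{j}^{h(F_{k})}$ for $j\in F_{k}$. By Step 1, $z_{h}=y_{g}$ exactly when $g$ is constant on each block of $\pi$ with value $h(F_{k})$ on $F_{k}$. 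Such an $h$ exists, and is then unique, if and only if $\pi\leq\ker g$. Hence
\[
P_{\pi}(\{y_{g}\})=p^{-|\pi|}\,\zeta(\pi,\ker g).
\]
This bookkeeping, in particular the uniqueness of $h$, is the only delicate point, and it rests entirely on the coordinatewise distinctness hypothesis.

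\textbf{Step 3: realizing every partition and concluding.} Given any $\sigma=(G_{1},\ldots,G_{m})\in\Pi_{n}$, we have $m\leq n\leq p$. So we may define $g$ to take the distinct values $1,\ldots,m$ on $G_{1},\ldots,G_{m}$, which gives $\ker g=\sigma$; this is where $p\geq n$ is used. Evaluating $\sum_{\pi}c_{\pi}P_{\pi}=0$ at $\{y_{g}\}$ yields
\[
\sum_{\pi\leq\sigma}c_{\pi}p^{-|\pi|}=0\qquad\text{for every }\sigma\in\Pi_{n}.
\]
Since the zeta matrix of $\Pi_{n}$ is invertible with inverse $\mu$, M\"obius inversion gives $c_{\pi}p^{-|\pi|}=\sum_{\sigma\leq\pi}\mu(\sigma,\pi)\cdot0=0$. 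Therefore $c_{\pi}=0$ for all $\pi$, and $(P_{\pi})_{\pi\in\Pi_{n}}$ is linearly independent. Alternatively, one can argue by induction upward from $\hat{\mathbf0}_{n}$ along the level sets.
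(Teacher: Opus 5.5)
Your proposal is correct and follows essentially the same route as the paper's proof. The paper indexes the atoms by $\alpha\in\{1,\ldots,p\}^{n}$ and uses its $\pi_{\alpha}$, which is your $\ker g$; it computes $P_{\pi}=p^{-|\pi|}\sum_{\pi\leq\pi_{\beta}}\delta_{x_{\beta}}$, uses $p\geq n$ to realize every partition, and concludes from the invertibility of the zeta matrix. Your explicit check that $h$ is unique spells out a step the paper leaves implicit.
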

	
	\begin{proof} Let $b_{\pi}\in \mathbb{R}$ be such that $\sum_{\pi \in \Pi_{n}}b_{\pi} P_{\pi}=0$. For $\alpha\in\{1,\ldots,p\}^{n}$, write $x_{\alpha}=(x_{1}^{\alpha_{1}},\ldots,x_{n}^{\alpha_{n}})$. First note that this sum is a linear combination of $\delta_{x_{\alpha}}$ for $\alpha \in \{1, \ldots, p\}^{n}$, which by the hypothesis are all distinct. To obtain the coefficients that multiply the Dirac measure $\delta_{x_{\alpha}}$, define the partition $\pi_{\alpha} \in \Pi_{n}$, by the property that $i, j \in \{1, \ldots,n\}$ are in the same block if and only if $\alpha_{i}=\alpha_{j}$.
		Since, for $\pi=(F_{1}, \ldots, F_{|\pi|}) \in \Pi_{n}$ we have that
		\[
		P_{\pi} =\frac{1}{p^{|\pi|}} \bigtimes_{j=1}^{|\pi|} \left (\sum_{i=1}^{p}\delta_{(x_{i})_{F_{j}}}\right )= \frac{1}{p^{|\pi|}}\sum_{\beta \in \{1, \ldots, p\}^{n}  \mid  \pi \leq \pi_{\beta} }\delta_{x_{\beta}}.
		\]
		Consequently, if $\sum_{\pi\in\Pi_{n}}b_{\pi} P_{\pi}=0$, then
		\[
		0=\sum_{\pi \in \Pi_{n}}b_{\pi} P_{\pi}= \sum_{\alpha\in \{1, \ldots, p\}^{n} }\left ( \sum_{\pi \leq \pi_{\alpha}}b_{\pi}\frac{1}{p^{|\pi|}} \right )\delta_{x_{\alpha}},
		\]
		thus, for any $\alpha \in \{1, \ldots, p\}^{n}$
		\[
		0=\left ( \sum_{\pi \leq \pi_{\alpha}}b_{\pi}\frac{1}{p^{|\pi|}} \right )= \sum_{\sigma \in \Pi_{n}} \frac{b_{\sigma}}{p^{|\sigma|}}\zeta(\sigma,\pi_{\alpha}).
		\]
		Since $p\geq n$,  the set $\{ \pi_{\alpha}, \quad \alpha \in \{1, \ldots, p\}^{n} \}$ encompasses all partitions in $\Pi_{n}$. Since the zeta function is invertible, we obtain that all $b_{\pi}=0$.		\end{proof}
	
	Lemma \ref{marginalidentifiability} still holds true if $P =\sum_{i=1}^{p}c_{i}\delta_{x_{i}}$, where $0 < c_{i} < 1$ and  $\sum_{i=1}^{p}c_{i}=1$. To present the converse of Lemma \ref{iducedzero}, we use the same marginally identifiable probability measure.
	
	\begin{corollary} \label{iducedzero2}Let $T \subset \Pi_{n}$ be a lower set and suppose that  $|X_{i}| \geq n$ for all $i$. Then a partition $\sigma \in \Pi_{n}$ satisfies $T[P_{\sigma}]=0$ for all probabilities  $P \in \mathcal{M}(\mathds{X}_{n})$ if and only if $\sigma  \in T \setminus\{\hat{\mathbf{1}}_{n}\}$.
	\end{corollary}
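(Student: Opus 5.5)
The direction ``if'' is exactly Lemma \ref{iducedzero}. For $\sigma\in T\setminus\{\hat{\mathbf{1}}_{n}\}$ we have $T[P_{\sigma}]=0$ for every probability $P$, and no assumption on $|X_{i}|$ is needed.

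For ``only if'' I argue by contraposition. Suppose $\sigma\notin T\setminus\{\hat{\mathbf 1}_{n}\}$. I will exhibit a probability $P$ with $T[P_{\sigma}]\neq0$, using the marginally identifiable measure of Lemma \ref{marginalidentifiability}. Since $|X_{i}|\geq n$, choose $n$ points with pairwise distinct coordinates in each $X_{j}$ and set $P=n^{-1}\sum_{i}\delta_{x_{i}}$.

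The key identity is $(P_{\sigma})_{\pi}=P_{\sigma\wedge\pi}$, already used in the proof of Lemma \ref{iducedzero}. It gives
\[
T[P_{\sigma}]=\sum_{\pi\in T}\mu(\pi,\hat{\mathbf 1}_{n})P_{\sigma\wedge\pi}=\sum_{\rho\in\Pi_{n}}c_{\rho}P_{\rho},\qquad c_{\rho}:=\sum_{\pi\in T,\ \sigma\wedge\pi=\rho}\mu(\pi,\hat{\mathbf 1}_{n}).
\]
By linear independence of $(P_{\rho})_{\rho\in\Pi_{n}}$, it suffices to find a single $\rho$ with $c_{\rho}\neq0$. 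I take $\rho=\sigma$, so $c_{\sigma}=\sum_{\pi\in T,\ \pi\geq\sigma}\mu(\pi,\hat{\mathbf 1}_{n})$.

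There are two cases.
\begin{enumerate}
\item If $\sigma=\hat{\mathbf 1}_{n}$, the only $\pi\geq\sigma$ is $\hat{\mathbf 1}_{n}$ itself, which lies in $T$. Hence $c_{\sigma}=\mu(\hat{\mathbf 1}_{n},\hat{\mathbf 1}_{n})=1$.
\item If $\sigma\notin T$, then no $\pi\in T\setminus\{\hat{\mathbf 1}_{n}\}$ satisfies $\pi\geq\sigma$, because $T$ is a lower set and would otherwise contain $\sigma$. So again only $\pi=\hat{\mathbf 1}_{n}$ contributes, and $c_{\sigma}=1$.
\end{enumerate}
In both cases $T[P_{\sigma}]$ has coefficient $1$ on $P_{\sigma}$ in the independent family, so $T[P_{\sigma}]\neq0$.

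The main step needing care is justifying the regrouping into the family $(P_{\rho})$. Distinct $\pi$ may give the same meet $\sigma\wedge\pi$, and linear independence only controls the coefficients after those terms are collected. This is exactly why I compute the aggregated coefficient $c_{\rho}$ rather than reading off individual $\mu(\pi,\hat{\mathbf 1}_{n})$. The lower-set property is what makes $c_{\sigma}$ collapse to the single term $\mu(\hat{\mathbf 1}_{n},\hat{\mathbf 1}_{n})=1$.
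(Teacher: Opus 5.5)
Your proposal is correct and follows essentially the paper's own argument. Both take the marginally identifiable measure from Lemma~\ref{marginalidentifiability}, expand $T[P_{\sigma}]$ via $(P_{\sigma})_{\pi}=P_{\sigma\wedge\pi}$, and use the lower-set property to show that the coefficient of $P_{\sigma}$ collapses to $\mu(\hat{\mathbf 1}_{n},\hat{\mathbf 1}_{n})=1$, contradicting linear independence. You also spell out two points the paper leaves implicit: the case $\sigma=\hat{\mathbf 1}_{n}$ (the paper's proof only treats $\sigma\notin T$), and the fact that the terms must be regrouped into aggregated coefficients $c_{\rho}$ before linear independence is applied.
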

	
	\begin{proof}
		The reverse implication is Lemma \ref{iducedzero}. For the forward implication, choose a marginally identifiable probability measure $P$ and write
		\[
		0=T[P_{\sigma}]=\sum_{\pi\in T}\mu(\pi,\hat{\mathbf1}_{n})P_{\sigma\wedge\pi}.
		\]
		If $\sigma\notin T$, then $\sigma\wedge\pi<\sigma$ for every proper $\pi\in T$, since otherwise the lower-set property would imply $\sigma\in T$. Thus the only coefficient that multiplies $P_{\sigma}$ is $\mu(\hat{\mathbf1}_{n},\hat{\mathbf1}_{n})=1$, contradicting linear independence.
	\end{proof}
	
	However, as shown in \cite{NIPS2013_076a0c97} for the lattice $\Pi_{3}$, the fact that a probability  $P \in \mathcal{M}(\mathds{X}_{n})$ satisfies $T[P]=0$ does not directly  imply that $P=P_{\pi}$ for some $\pi \in T \setminus\{\hat{\mathbf{1}}_{n}\}$. In the next lemma we provide a general construction exhibiting the same phenomenon. In its proof, we repeatedly use the fact that for $p \in \mathbb{N}$ it holds that   $\sum_{\alpha \in \{1,2\}^{p}} (-1)^{\alpha_{j}}=0$ for any $1\leq j \leq p$.
	
	\begin{lemma}\label{counter} Let $n\geq3$ and let $T \subset \Pi_{n}$ be a lower set such that  $\{\pi \in \Pi_{n}, \quad |\pi|=n-1\} \subset T$ and suppose that $|X_{i}| \geq 2$ for all $i$. Then there exists a probability   $P \in \mathcal{M}(\mathds{X}_{n})$ such that $T[P]=0$ but $P \neq P_{\pi}$ for all $\pi \in \Pi_{n}\setminus\{\hat{\mathbf{1}}_{n}\}$.
	\end{lemma}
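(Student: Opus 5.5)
The idea is to take a small perturbation of the uniform distribution on a $\{1,2\}^{n}$-grid. The perturbation should be built from pairwise ``correlation characters'' that all share the coordinate $1$. Then no partition can separate two of them into different blocks, and $P_{\pi}$ stays linear in the perturbation.

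\textbf{Construction.} Pick two distinct points in each $X_{i}$ and identify them with $\{1,2\}$. For $\alpha\in\{1,2\}^{n}$ write $s_{j}(\alpha)=(-1)^{\alpha_{j}}$, and let $U$ be the uniform probability on $\{1,2\}^{n}$. Define
\[
P:=U\cdot\Big(1+a\sum_{j=2}^{n}s_{1}s_{j}\Big),\qquad 0<a<\tfrac{1}{n-1},
\]
meaning $P(\{\alpha\})=2^{-n}\big(1+a\sum_{j\ge2}s_{1}(\alpha)s_{j}(\alpha)\big)$. This is a probability measure: it is positive by the choice of $a$, and it has total mass $1$ by the cancellation fact $\sum_{\alpha}(-1)^{\alpha_{j}}=0$ quoted before the lemma.

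\textbf{Marginals.} Using the same cancellation, summing out any coordinate kills every monomial containing it. Hence for $F\subseteq\{1,\ldots,n\}$ we get $P_{F}=U_{F}\big(1+a\sum_{j\in F,\,j\neq1}s_{1}s_{j}\big)$ if $1\in F$, and $P_{F}=U_{F}$ otherwise. For a partition $\pi$, only the block containing $1$ carries a nonconstant factor, so no products of distinct monomials appear. This is the point of choosing a star of pairs centred at $1$ rather than, say, a cycle. Therefore
\[
P_{\pi}=U\Big(1+a\sum_{j\neq1,\ j\sim_{\pi}1}s_{1}s_{j}\Big).
\]

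\textbf{Vanishing of $T[P]$.} Substituting into \eqref{interaction} gives
\[
T[P]=U\Big[\sum_{\pi\in T}\mu(\pi,\hat{\mathbf1}_{n})+a\sum_{j=2}^{n}s_{1}s_{j}\sum_{\pi\in T,\ \pi\geq\sigma_{1j}}\mu(\pi,\hat{\mathbf1}_{n})\Big].
\]
Here $\sigma_{1j}$ is the partition whose only nonsingleton block is $\{1,j\}$; it has $n-1$ blocks, so $\sigma_{1j}\in T$ by hypothesis. The first sum vanishes by \eqref{soma01} applied to the lattice $T$ (using Remark~\ref{maiorelementosubposet}, $\mu$ on $T$ is $\mu_{T}$ at the top). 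Each inner sum vanishes by \eqref{somax1} in $T$ with $\pi=\sigma_{1j}\neq\hat{\mathbf1}_{n}$. Hence $T[P]=0$. This is exactly where the hypothesis on partitions with $n-1$ blocks is used.

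\textbf{Indecomposability.} Let $\pi\neq\hat{\mathbf1}_{n}$. Then some $j\neq1$ lies in a different block from $1$, so the coefficient of $s_{1}s_{j}$ in $P_{\pi}/U$ is $0$, while in $P/U$ it is $a\neq0$. The functions $\{\prod_{i\in A}s_{i}\}_{A}$ are linearly independent on $\{1,2\}^{n}$ (they are the characters of $(\mathbb Z/2)^{n}$), so $P\neq P_{\pi}$. The main obstacle was choosing a perturbation for which the products over blocks in $P_{\pi}$ do not generate higher-order monomials that $T$ cannot cancel, while still making every nontrivial partition fail. The star configuration resolves both requirements at once; note that $n\geq3$ is needed for the statement to be nontrivial.
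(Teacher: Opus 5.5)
Your proof is correct and is essentially the paper's own argument: the same star-shaped perturbation $2^{-n}\bigl(1+\varepsilon(-1)^{\alpha_{1}}\sum_{j\geq2}(-1)^{\alpha_{j}}\bigr)$ of the uniform measure on the $\{1,2\}^{n}$ grid, the same marginal formula, and the same cancellation via Equations~\ref{soma01} and~\ref{somax1} at the partitions $\sigma_{1j}\in T$. The only difference is cosmetic: for indecomposability you use linear independence of the characters, whereas the paper uses $P_{\{1,j\}}\neq P_{\{1\}}\times P_{\{j\}}$.

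There are two minor slips in your commentary. First, Remark~\ref{maiorelementosubposet} concerns values strictly below the top, so it is not what licenses applying Equations~\ref{soma01} and~\ref{somax1}; that holds because $T[P]$ is defined with the M\"obius function of $T$ itself. Second, for $n=2$ the statement is false rather than trivial, since then $\sigma_{12}=\hat{\mathbf{1}}_{2}$ and Equation~\ref{somax1} no longer applies.
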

	
	\begin{proof} Let $x_{i}^{1}\neq x_{i}^{2}$ be  points in $X_{i}$ for $1\leq i \leq n$ and  write $x_{\alpha}=(x_{1}^{\alpha_{1}},\ldots,x_{n}^{\alpha_{n}})$ for $\alpha\in\{1,2\}^{n}$. For $0<\varepsilon<1/(n-1)$, define the measure
		\[
		P=\frac{1}{2^{n}}\sum_{\alpha \in \{1,2\}^{n}}\left [1 +  \varepsilon (-1)^{\alpha_{1}}\left (  \sum_{j=2}^{n}(-1)^{\alpha_{j}}\right ) \right ]\delta_{x_{\alpha}}.
		\]
		Then $P$ is a probability because
		\[
		P(\{x_{\alpha}\})= \frac{1}{2^{n}}\left [1 +  \varepsilon (-1)^{\alpha_{1}}\left (  \sum_{j=2}^{n}(-1)^{\alpha_{j}}\right ) \right ] \geq  \frac{1}{2^{n}}(1 - \varepsilon(n-1)) >0,
		\]
		\[
		\sum_{\alpha \in \{1,2\}^{n}}P(\{x_{\alpha}\})= \sum_{\alpha \in \{1,2\}^{n}}\frac{1}{2^{n}}+  	\frac{\varepsilon}{2^{n}} \sum_{j=2}^{n}\left (\sum_{\alpha \in \{1,2\}^{n}}  (-1)^{\alpha_{1}+\alpha_{j}}\right )=1+0=1.
		\]
		Now let $F\subseteq\{1,\ldots,n\}$ be nonempty. Its marginal $P_{F}$ depends on whether $1\in F$, as follows:
		\[
		P_{F}(\{x_{\alpha_{F}}\})=
		\begin{cases}
			2^{-|F|},
			& \text{ if }  1\notin F,\\
			\left (1 + \varepsilon (-1)^{\alpha_{1}}\sum_{j \in F\setminus{\{1\}}}(-1)^{\alpha_{j}}\right )2^{-|F|}  , & \text{ if } 1 \in F \text{ and } |F|>1
		\end{cases}
		\]
		and $P_{\{1\}}= (\delta_{x_{1}^{1}}+\delta_{x_{1}^{2}})/2 $. Now let $\pi\in\Pi_{n}\setminus\{\hat{\mathbf1}_{n}\}$. Then there must exist a $j \in \{2, \ldots, n\}$ for which $1$ and $j$ are not in the same block of $\pi$. In particular, if $P = P_{\pi}$ we would have the marginal equality $P_{\{1,j\}}=(P_{\pi})_{\{1,j\}} = P_{\{1\}}\times P_{\{j\}}$ which cannot occur by the description of $P_{F}$.\\
		For $\pi\in\Pi_{n}$, including the top partition, let $B_{\pi}^{1}$ be the block containing $1$. For $2\leq j\leq n$, let $\sigma_{j}$ be the partition with block $\{1,j\}$ and all other blocks singletons, thus $|\sigma_{j}|=n-1$ and  $\sigma_{j} \in T$. Then
		\[
		P_{\pi}(\{x_{\alpha}\})= (1 + \varepsilon (-1)^{\alpha_{1}}\sum_{j \in B_{\pi}^{1}\setminus{\{1\}}}(-1)^{\alpha_{j}})2^{-n}= (1 + \varepsilon \sum_{\sigma_{j}\leq \pi }(-1)^{\alpha_{1} + \alpha_{j}})2^{-n}.
		\]
		To conclude that $T[P]$ is the zero measure we use Equations \ref{soma01}	and \ref{somax1} to get
		\[
		T[P](\{x_{\alpha}\})=\sum_{\pi \in T}\mu(\pi,\hat{\mathbf 1}_{n})P_{\pi}(\{x_{\alpha}\})=2^{-n}\sum_{\pi \in T}\mu(\pi,\hat{\mathbf 1}_{n}) + \varepsilon2^{-n}\sum_{j=2}^{n}(-1)^{\alpha_{1} + \alpha_{j}} \sum_{\substack{\pi\in T\\\pi\geq\sigma_{j}}}\mu(\pi,\hat{\mathbf 1}_{n})=0.
		\]
	\end{proof}

	To define the  concept of a symmetric partition lattice we need some additional terminology.
	
	\begin{definition}\label{order[n]} For $n \in \mathbb{N}$, we define  its integer-partition poset
		\[
		\operatorname{Par}(n):= \left\{ \alpha \in \mathbb{N}^{\ell} \mid 1 \leq \ell \leq n, \quad \alpha_{1} \geq \ldots \geq \alpha_{\ell} \geq 1 \text{ and } \sum_{i=1}^{\ell} \alpha_{i}=n \right\}.
		\]
		For $\alpha\in\operatorname{Par}(n)$, let $\ell(\alpha)$ be its number of parts. Define $\alpha\preceq\beta$ if the indices of the parts of $\alpha$ can be partitioned into blocks $(G_{1},\ldots,G_{\ell(\beta)})$ such that $\sum_{j\in G_{i}}\alpha_{j}=\beta_{i}$. The maximal element is denoted by $(n)$.
		
		For $\pi\in\Pi_{n}$, we define its shape by $\shp(\pi)$, as  the integer partition obtained by listing its block sizes in decreasing order. For $\alpha\preceq\beta$, define $m(\alpha,\beta)$ as the number of coarsenings of shape $\beta$ of a fixed partition of shape $\alpha$, and set $m(\alpha,\beta)=0$ otherwise. This number does not depend on the chosen partition. Equivalently, it counts the groupings of the parts of $\alpha$, with their indices distinguished, that yield $\beta$; merely reordering the groups does not give a new grouping. For example, $m((2,1,1),(3,1))=2$, corresponding to $(\{1,2\},\{3\})$ and $(\{1,3\},\{2\})$.
		
		We also define for $\alpha=(\alpha_{1}, \ldots, \alpha_{\ell})   \in \operatorname{Par}(n)$ the largest part $\alpha_{1}$, that is, the largest entry of $\alpha$.
	\end{definition}
	
	We are not interested in the behavior of the M\"obius function on lower sets of the poset  $\operatorname{Par}(n)$. However, in the next theorem we show that the analysis of the class of symmetric lower sets of $\Pi_{n}$ is connected to lower sets of $\operatorname{Par}(n)$.
	
	\begin{theorem} \label{charsymmetricpartitionposet}
		Let $T \subset \Pi_{n}$ be a lower set that  is invariant under the action of the symmetric group, that is, if $\pi=(F_{1}, \ldots, F_{\ell}) \in T$, then for any bijection $\tau:\{1, \ldots, n\}\to \{1, \ldots, n\}$, the permuted partition $\tau(\pi)= (\tau(F_{1}), \ldots, \tau(F_{\ell}))$ is also an element of $T$.\\
		Then, there exists a lower set $S \subset \operatorname{Par}(n)$  such that $\pi \in T$ if and only if $\shp(\pi) \in S$. Conversely, every lower set $S\subseteq\operatorname{Par}(n)$ gives an invariant lower set by this rule. Furthermore, $\mu(\pi, \hat{\mathbf{1}}_{n})= H(\shp(\pi))$, for some function  $H: S \to \mathbb{R}$, which satisfies the recursive relations:
		\[
		H((n))=1, \quad H(\alpha) = - \sum_{\substack{\alpha \prec \beta \preceq (n) \\ \beta \in S}}H(\beta)m(\alpha, \beta).
		\]
	\end{theorem}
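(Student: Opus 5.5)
The proof splits into three parts: the correspondence $T\leftrightarrow S$ via shapes, the fact that $\mu(\pi,\hat{\mathbf{1}}_{n})$ depends only on $\shp(\pi)$, and the recursion for $H$. The basic tool throughout is that permutations act transitively on partitions of a given shape: two partitions $\pi,\pi'\in\Pi_{n}$ satisfy $\shp(\pi)=\shp(\pi')$ if and only if $\pi'=\tau(\pi)$ for some bijection $\tau$. The bijection is obtained by matching blocks of equal size and choosing bijections between them.

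\textbf{The correspondence.} Given an invariant lower set $T$, set $S:=\{\shp(\pi):\pi\in T\}$. By transitivity and invariance, $\pi\in T$ iff $\shp(\pi)\in S$. It remains to check that $S$ is a lower set of $\operatorname{Par}(n)$ in the paper's sense.
\begin{itemize}
\item[$\bullet$] $S$ contains $(1,\ldots,1)=\shp(\hat{\mathbf0}_{n})$ and $(n)=\shp(\hat{\mathbf1}_{n})$.
\item[$\bullet$] Suppose $\alpha\preceq\beta\in S\setminus\{(n)\}$, and pick $\sigma\in T$ with $\shp(\sigma)=\beta$. The grouping $(G_{1},\ldots,G_{\ell(\beta)})$ witnessing $\alpha\preceq\beta$ tells us to split the $i$-th block of $\sigma$ into pieces of sizes $\alpha_{j}$, $j\in G_{i}$. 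This yields $\pi\leq\sigma$ with $\shp(\pi)=\alpha$. Since $\sigma\neq\hat{\mathbf1}_{n}$ and $T$ is a lower set, $\pi\in T$, so $\alpha\in S$.
\end{itemize}
For the converse, let $S$ be a lower set and $T:=\{\pi:\shp(\pi)\in S\}$. Then $T$ is invariant because $\tau$ preserves shapes. It contains $\hat{\mathbf0}_{n}$ and $\hat{\mathbf1}_{n}$. It is a lower set because $\pi\leq\sigma$ implies $\shp(\pi)\preceq\shp(\sigma)$: group the blocks of $\pi$ according to the block of $\sigma$ containing them.

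\textbf{Invariance of $\mu$.} Each bijection $\tau$ induces an order automorphism of $\Pi_{n}$. This automorphism maps $T$ onto itself and fixes $\hat{\mathbf1}_{n}$. By the recursive characterization \eqref{posetdefn} (applied to $\mu_{T}$, which is the $\mu$ used in the interaction), we get $\mu_{T}(\tau(\pi),\hat{\mathbf1}_{n})=\mu_{T}(\pi,\hat{\mathbf1}_{n})$. The cleanest way to see this is induction on the level sets of $T$ from the top. Together with transitivity on each shape class, this shows $H(\shp(\pi)):=\mu_{T}(\pi,\hat{\mathbf1}_{n})$ is well defined on $S$.

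\textbf{The recursion.} The one step requiring care is translating \eqref{posetdefn} into a sum over shapes. I will use the dual form of the recursion,
\[
\mu(\pi,\hat{\mathbf1})=-\sum_{\pi<\sigma\leq\hat{\mathbf1}}\mu(\sigma,\hat{\mathbf1}),
\]
which holds because $\mu$ is also a left inverse of $\zeta$. Restricting to $\sigma\in T$ and fixing $\pi$ with $\shp(\pi)=\alpha\neq(n)$, I group the $\sigma>\pi$ in $T$ by their shape $\beta$. By definition there are exactly $m(\alpha,\beta)$ coarsenings of $\pi$ of shape $\beta$, this count being independent of the representative $\pi$ by transitivity. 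Each has $\mu$-value $H(\beta)$. The strict inequality $\sigma>\pi$ corresponds to $\beta\succ\alpha$, because a coarsening of the same shape has the same number of blocks and hence equals $\pi$. This gives
\[
H(\alpha)=-\sum_{\substack{\alpha\prec\beta\preceq(n)\\ \beta\in S}}H(\beta)\,m(\alpha,\beta),
\]
with $H((n))=\mu(\hat{\mathbf1}_{n},\hat{\mathbf1}_{n})=1$. I expect the only point needing care to be the well-definedness of $m(\alpha,\beta)$ and its identification with the number of such $\sigma$; both follow from the transitivity observation.
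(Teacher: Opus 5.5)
Your proposal is correct and follows essentially the same route as the paper. The shape correspondence comes from block-matching permutations and block refinement, and the recursion comes from the top-down identity $\mu(\pi,\hat{\mathbf{1}}_{n})=-\sum_{\sigma>\pi}\mu(\sigma,\hat{\mathbf{1}}_{n})$, grouped by shapes and counted by $m(\alpha,\beta)$. The only difference is cosmetic: you prove separately that $\mu_T(\cdot,\hat{\mathbf{1}}_{n})$ depends only on shape, via the order automorphism induced by $\tau$, whereas the paper obtains this inside its level-set induction.
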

	
	\begin{proof} Define $S \subset \operatorname{Par}(n)$ as the set of $\alpha \in \operatorname{Par}(n)$ for which there exists a $\pi \in T$ such that $\alpha = \shp(\pi)$.\\
		Now,  we prove that if $\shp(\sigma)  \in S$, then $\sigma \in T$. Indeed,  since  $\shp(\sigma)  \in S$  there must exist a partition $\pi \in T$ for which   $\shp(\sigma) =\shp(\pi)$. Hence, if $\sigma=(G_{1}, \ldots, G_{\ell})$ and $\pi = (F_{1}, \ldots, F_{\ell})$, as we always have $|G_{i}|= |F_{i}|$, the union of bijections from $F_{i}$ to $G_{i}$ defines a permutation taking $\pi$ to $\sigma$. The invariance of $T$ gives $\sigma\in T$.\\ Now, if $\alpha\in S\setminus\{(n)\}$ and $\gamma\preceq\alpha$, choose $\pi\in T$ with shape $\alpha$ and refine its blocks to obtain a partition $\sigma\leq\pi$ of shape $\gamma$. Since $\pi$ is proper, $\sigma\in T$, so $\gamma\in S$. Thus $S$ is a lower set. Conversely, the preimage of a lower set $S$ under $\shp$ is a lower set of $\Pi_{n}$ and is invariant under permutations.\\
		We construct the function $H$ and prove its properties by induction on the level sets of $S$ using   Equation \ref{somax1}. First, note that if $\pi > \sigma $ then $\shp(\pi) \succ \shp(\sigma)$. \\
		Define $	H((n))= \mu(\hat{\mathbf{1}}_{n}, \hat{\mathbf{1}}_{n})=1$.  For $\alpha \in I_{2}$, for any $\pi \in T$ with $\shp(\pi) =\alpha$ we have that  $m(\shp(\pi),(n))=1$ and then  $\mu(\pi, \hat{\mathbf{1}}_{n})= H(\shp(\pi))=-1$.\\
		Now, suppose that we have already  constructed the function $H$ and proved its properties on $I_{1}, \ldots, I_{k}$. We prove the corresponding properties on $I_{k+1}$. Let $\alpha\in I_{k+1}$. Then for any $\pi \in T$ with $\shp(\pi) =\alpha$ we have that
		\[
		\mu(\pi, \hat{\mathbf{1}}_{n}) = -\sum_{x> \pi} \mu(x, \hat{\mathbf{1}}_{n})=  -\sum_{x> \pi} H(\shp(x))=  - \sum_{\substack{\alpha \prec \beta \preceq (n) \\ \beta \in S}}H(\beta)\#\{x \in T \mid \shp(x) = \beta \text{ and } x > \pi \},
		\]
		The conclusion follows from the fact that $\#\{x \in T \mid \shp(x) = \beta \text{ and } x > \pi \}= m(\alpha, \beta)$ for any $\beta \in S$.
	\end{proof}
	
	We emphasize that the function $H: S \to \mathbb{R}$ obtained in  Theorem \ref{charsymmetricpartitionposet} is not the M\"obius function on $S$. As an example, in $\Pi_{3}$  we have the Lancaster interaction (or Streitberg, as they are the same in this case) with M\"obius function  $H((3))=1,  H((2,1))=-1, H((1,1,1))=2$, but the M\"obius function on the poset $\operatorname{Par}(3)$ is  $\mu((3),(3))=1,  \mu((2,1),(3))=-1,  \mu((1,1,1),(3))=0$.
	
	If a lower set  $T \subset \Pi_{n}$ satisfies the invariance hypothesis of Theorem \ref{charsymmetricpartitionposet}, we say that $T$ is a symmetric partition lattice. We define the order of $T$ as
	\[
	|T|_{\infty}:= 1 + \max_{\alpha \in S \setminus{\{(n)} \}}\alpha_{1}.
	\]
	For instance, both the Lancaster and the Streitberg lattices have order $n$. A reason for the  term $+1$ in this definition is to keep the indexing consistent with \cite{Guella2025a}. We emphasize that the value of the order of $T$ can be quite different from its height. For instance, the symmetric partition lattice $J_{3}^{2n}$ defined in Section \ref{sizelimited} has a height of $n+2$ while its order is $3$ (for $n\geq2$). Every symmetric partition lattice of order at least $3$ contains all partitions with $n-1$ blocks. Thus Lemma \ref{counter} applies to each such lattice when the coordinate spaces contain at least two points.
	
	Now we prove   two results  that will lead to some important consequences for the use of symmetric partition lattices in kernel methods and related aspects. For the first result, given probability measures $P^{1}_{i}, P_{i}^{2} \in \mathcal{M}(X_{i})$  and $\alpha=(\alpha_{1}, \ldots, \alpha_{n}) \in \mathbb{N}_{2}^{n}:=\{1,2\}^{n}$, we use the notation $P_{\alpha}:=\bigtimes_{i=1}^{n} P_{i}^{\alpha_{i}}$ and $|\alpha|:=\sum_{i=1}^{n}\alpha_{i} $.

	\begin{theorem}\label{genStreitbergcartesianproduct}
		Let $T\subseteq\Pi_{n}$ be a symmetric partition lattice of order $k$, and let $\mu_{i}\in\mathcal M(X_{i})$, $1\leq i\leq n$, satisfy $\#\{i:\mu_{i}(X_{i})=0\}\geq k$. There exist $M>0$ and a probability measure $P\in\mathcal M(\mathds X_{n})$ such that $T[P]=M(-1)^{n}\bigtimes_{i=1}^{n}\mu_{i}$.
	\end{theorem}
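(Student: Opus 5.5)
The plan is to build $P$ as a product measure plus a small signed perturbation whose marginals vanish on every block that can occur in a proper partition of $T$. The perturbation is a scalar multiple of $\bigtimes_{i=1}^{n}\mu_{i}$. Equation \ref{soma01} then kills the product part, and only the perturbation survives in $T[P]$.

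\textbf{Trivial case and setup.} If some $\mu_{i}=0$, take any product probability $P=\bigtimes_i Q_i$. It equals $P_{\hat{\mathbf0}_{n}}$, and $\hat{\mathbf0}_{n}\in T\setminus\{\hat{\mathbf1}_{n}\}$ when $n\geq 2$, so Lemma \ref{iducedzero} gives $T[P]=0$, which is the required identity for any $M>0$. Otherwise, fix a set $A\subseteq\{1,\ldots,n\}$ with $|A|\geq k$ and $\mu_{i}(X_{i})=0$ for $i\in A$; note $A\neq\emptyset$ since $k\geq 2$. Define normalized measures as follows.
\begin{itemize}
\item For $i\in A$, use the Jordan decomposition to write $\mu_{i}=c_{i}(P_{i}^{1}-P_{i}^{2})$ with $c_i>0$ and $P_i^1,P_i^2$ probabilities. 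Set $Q_{i}:=(P_{i}^{1}+P_{i}^{2})/2$ and $\nu_{i}:=(P_{i}^{1}-P_{i}^{2})/2$.
\item For $i\notin A$, set $Q_{i}:=|\mu_{i}|/\|\mu_{i}\|$ and $\nu_{i}:=\mu_{i}/\|\mu_{i}\|$.
\end{itemize}
In every case $|\nu_{i}|\leq Q_{i}$ setwise, and $\nu_i$ is a positive multiple of $\mu_i$. Put
\[
P:=\bigtimes_{i=1}^{n}Q_{i}+\varepsilon\bigtimes_{i=1}^{n}\nu_{i},\qquad \varepsilon\in\{\pm\tfrac12\}.
\]

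\textbf{$P$ is a probability.} The perturbation has total mass $0$, because each factor with $i\in A$ has mass $0$. Positivity holds because $|\bigtimes_i\nu_i|\leq\bigtimes_i Q_i$, so the perturbation is dominated by the product part once $|\varepsilon|<1$.

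\textbf{Marginals.} For $F\subseteq\{1,\ldots,n\}$ with $A\not\subseteq F$, the marginal of $\bigtimes_i\nu_i$ on $\mathds X_F$ vanishes: some $i\in A\setminus F$ is integrated out, and $\nu_i(X_i)=0$. Hence $P_F=\bigtimes_{i\in F}Q_i$. Now take $\pi\in T\setminus\{\hat{\mathbf1}_{n}\}$. Its shape lies in $S\setminus\{(n)\}$, so every block of $\pi$ has size at most $|T|_\infty-1=k-1<|A|$, and no block contains $A$. Therefore $P_{\pi}=\bigtimes_{i=1}^{n}Q_{i}$ for every proper $\pi\in T$, while $P_{\hat{\mathbf1}_{n}}=P$.

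\textbf{Conclusion.} Using Equation \ref{soma01} on $T$ (together with Remark \ref{maiorelementosubposet}),
\[
T[P]=\Big(\sum_{\pi\in T}\mu(\pi,\hat{\mathbf1}_{n})\Big)\bigtimes_i Q_i+\varepsilon\bigtimes_i\nu_i=\varepsilon C\bigtimes_{i=1}^{n}\mu_i,
\]
where $C>0$ is the product of the normalizing constants. Choosing the sign $\varepsilon=(-1)^n/2$ gives $M=C/2>0$.

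The main step is the marginal claim: the lattice enters only through the bound that every block of a proper partition in $T$ has size below the order $k$.
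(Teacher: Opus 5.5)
Your proof is correct and follows the paper's argument. The paper's $P$, written as a mixture of parity-matched product measures, is exactly your measure with $A=\{1,\ldots,k\}$ and $\varepsilon=(-1)^{n}$ instead of $(-1)^{n}/2$. As in your proof, it uses that marginals on at most $k-1$ coordinates are products, and then Equation \ref{soma01} reduces $T[P]$ to $P$ minus the product of its one-dimensional marginals. Your perturbative form makes positivity and the marginal computation more transparent. The appeal to Remark \ref{maiorelementosubposet} is unnecessary, since Equation \ref{soma01} holds directly in the lattice $T$.
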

	
	\begin{proof} We  may assume that all measures $\mu_{i}$ are nonzero, because otherwise we take $M=1$ and an arbitrary $P $ that satisfies $P=\bigtimes_{i=1}^{n}P_{i}$.\\
		Let $\mu_{i}  \in \mathcal{M}(X_{i})$, $1\leq i \leq n$,  with the restriction that  $\#\{i:\mu_{i}(X_{i})=0\}\geq k$. For convenience, we assume that $\{1, \ldots, k\} \subset \{i, \quad \mu_{i}(X_{i})=0\}$. Then, for $1\leq i \leq k$ the Hahn-Jordan decomposition of $\mu_{i}$  can be written as  $\mu_{i}= b_{i}[S_{i}^{1}  - S_{i}^{2} ]$, where $b_{i}$ is positive  and $S_{i}^{1}, S_{i}^{2}$ are probability measures on $X_{i}$. Then
		\[
		\bigtimes_{i=1}^{k}\mu_{i} = B(\bigtimes_{i=1}^{k}[S_{i}^{1} - S_{i}^{2}])= B\sum_{\alpha\in \mathbb{N}_{2}^{k}}(-1)^{k- |\alpha|}S_{\alpha}=  (-1)^{k}B\left [ \sum_{|\alpha| \in 2\mathbb{Z}}S_{\alpha} - \sum_{|\alpha| \in 2\mathbb{Z}+1}S_{\alpha}\right ],
		\]
		where $B=  \prod_{i=1}^{k}b_{i}$.  Also, for $i\geq k+1$,  let $\mu_{i}= c_{i}^{1}R_{i}^{1}  - c_{i}^{2}R_{i}^{2} $ be a  Hahn-Jordan decomposition, where $c_{i}^{1}, c_{i}^{2}$ are nonnegative and $R_{i}^{1}, R_{i}^{2}$ are probability measures on $X_{i}$. We then have
		\[
		\bigtimes_{i=k+1}^{n}\mu_{i} = \sum_{\beta\in \mathbb{N}_{2}^{n-k}}(-1)^{n-k- |\beta|}c_{\beta}R_{\beta} = (-1)^{n-k} \left [ \sum_{|\beta| \in 2\mathbb{Z}}c_{\beta}R_{\beta} - \sum_{|\beta| \in 2\mathbb{Z}+1}c_{\beta}R_{\beta}\right ],
		\]
		where $c_{\beta} := \prod_{i=k+1}^{n}c_{i}^{\beta(i-k)}$ and $R_{\beta }:= \bigtimes_{i=k+1}^{n}R_{i}^{\beta(i-k)}$. Thus
		\[
		(-1)^{n}\bigtimes_{i=1}^{n}\mu_{i}=  B\left [ \sum_{|\alpha| \in 2\mathbb{Z}}S_{\alpha} - \sum_{|\alpha| \in 2\mathbb{Z}+1}S_{\alpha}\right ]  \times \left [ \sum_{|\beta| \in 2\mathbb{Z}}c_{\beta}R_{\beta} - \sum_{|\beta|   \in 2\mathbb{Z}+1}c_{\beta}R_{\beta}\right ].
		\]
		Define the probability
		\[
		P:= \frac{1}{D}\left [\left (  \sum_{|\alpha| \in 2\mathbb{Z}}S_{\alpha} \right ) \times \left ( \sum_{|\beta| \in 2\mathbb{Z}}c_{\beta}R_{\beta} \right ) + \left ( \sum_{|\alpha| \in 2\mathbb{Z}+1}S_{\alpha} \right ) \times \left (  \sum_{|\beta| \in 2\mathbb{Z}+1}c_{\beta}R_{\beta} \right ) \right ],
		\]
		where $D = 2^{k-1}\sum_{\beta\in \mathbb{N}_{2}^{n-k}}c_{\beta}$. Then, if $i \in \{1, \ldots, k\}$
		\[
		P_{\{1,\ldots,n\}\setminus\{i\}}=\frac{2^{k-1}}{D}
		\left(\bigtimes_{\substack{j=1\\j\neq i}}^{k}\frac{S_{j}^{1}+S_{j}^{2}}{2}\right)\times\left(\sum_{\beta\in\mathbb N_{2}^{n-k}}c_{\beta} R_{\beta}\right)=\left(\bigtimes_{\substack{j=1\\j\neq i}}^{k}\frac{S_{j}^{1}+S_{j}^{2}}{2}\right)\times\left(\bigtimes_{j=k+1}^{n}\frac{c_{j}^{1}R_{j}^{1}+c_{j}^{2}R_{j}^{2}}{c_{j}^{1}+c_{j}^{2}}\right).
		\]
		From this relation we obtain that
		\[
		P_{i}= \frac{S_{i}^{1} + S_{i}^{2}}{2}, \text{ for $1\leq i \leq k$  and } P_{i}= \frac{c_{i}^{1}R_{i}^{1} + c_{i}^{2}R_{i}^{2}}{c_{i}^{1} + c_{i}^{2}}, \text{ for $i \geq k+1$}.
		\]
		In particular,  we also  obtain that  for any $F \subset \{1, \ldots, n\}$, with $|F|\leq k-1$, it holds that $P_{F}= \bigtimes_{i\in F }P_{i}$,  because there must exist an index in $\{1, \ldots, k\} \cap F^{c}$. Gathering all these relations, we conclude that for any $\pi \in T\setminus\{\hat{\mathbf{1}}_{n}\}$ it holds that $P_{\pi}= \bigtimes_{i=1 }^{n}P_{i}$. 	Thus, using $\mu(\hat{\mathbf1}_{n},\hat{\mathbf1}_{n})=1$ and Equation \ref{soma01}, we get that
		\[
		T[P]=    \sum_{\pi \in T}\mu(\pi, \hat{\mathbf{1}}_{n})P_{\pi}=P  +  \left [\bigtimes_{i=1}^{n} P_{i}  \right ]\sum_{\pi \in T\setminus\{\hat{\mathbf{1}}_{n}\} }\mu(\pi, \hat{\mathbf{1}}_{n}) =  P -\bigtimes_{i=1}^{n} P_{i},
		\]
		On the other hand,
		\[
		\bigtimes_{i=1}^{n} P_{i}=\frac{1}{2D} \left ( \sum_{\alpha \in \mathbb{N}_{2}^{k}} S_{\alpha}  \right ) \times \left (  \sum_{\beta \in \mathbb{N}_{2}^{n-k}} c_{\beta}R_{\beta}   \right ),
		\]
		Therefore, $T[P]$ is equal to 
		\[
		P -\bigtimes_{i=1}^{n} P_{i}= \frac{1}{2D}  \left [\left (  \sum_{|\alpha| \in 2\mathbb{Z}}S_{\alpha} - \sum_{|\alpha| \in 2\mathbb{Z}+1}S_{\alpha}\right ) \times \left ( \sum_{|\beta| \in 2\mathbb{Z}}c_{\beta}R_{\beta} - \sum_{|\beta| \in 2\mathbb{Z}+1}c_{\beta}R_{\beta} \right ) \right ]=\frac{(-1)^{n}}{2BD}\bigtimes_{i=1}^{n}\mu_{i}.
		\]\end{proof}
	
	For the second result, we recall the definition of the vector space
	\[
	\mathcal{M}_{k}( \mathds{X}_{n}):=\{\mu \in \mathcal{M}( \mathds{X}_{n}) : \mu (\prod_{i=1}^{n}A_{i} )=0, \text{ if } | \{i: A_{i}=X_{i}\} |\geq n-k+1 \},
	\]
	defined in  \cite{Guella2025a}.
	
	\begin{theorem}\label{genStreitbergvectorspace}Let $T \subset \Pi_{n}$ be a symmetric partition lattice of order $k$ and  $P \in \mathcal{M}(\mathds{X}_{n})$ be a probability. Then  $T[P] \in  \mathcal{M}_{k}(\mathds{X}_{n})$.\end{theorem}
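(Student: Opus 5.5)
The plan is to unpack membership in $\mathcal{M}_{k}(\mathds{X}_{n})$ as a statement about marginals, and then reduce each marginal of $T[P]$ to an interaction that vanishes by Lemma \ref{iducedzero}. A set $\prod_{i}A_{i}$ with $|\{i: A_{i}=X_{i}\}|\geq n-k+1$ has its nontrivial coordinates inside some $G\subseteq\{1,\ldots,n\}$ with $|G|\leq k-1$. So it suffices to prove that the marginal $(T[P])_{G}$ on $\mathds{X}_{G}$ is the zero measure for every $G$ with $|G|\leq k-1$.

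The case $G=\emptyset$ is a total-mass statement. Every $P_{\pi}$ is a probability, so the total mass of $T[P]$ is $\sum_{\pi\in T}\mu(\pi,\hat{\mathbf1}_{n})$, which is $0$ by Equation \ref{soma01}. For $G\neq\emptyset$, first compute the marginal of each term. If $\pi=(F_{1},\ldots,F_{\ell})$, then
\[
(P_{\pi})_{G}=\bigtimes_{j: F_{j}\cap G\neq\emptyset}P_{F_{j}\cap G},
\]
so it depends on $P$ only through $P_{G}$ and its marginals.

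The key step is to replace $P$ by the auxiliary probability $Q:=P_{G}\times\bigtimes_{i\notin G}P_{i}$. This $Q$ has the same marginals $Q_{E}=P_{E}$ for all $E\subseteq G$. Hence $(Q_{\pi})_{G}=(P_{\pi})_{G}$ for every $\pi$, and therefore $(T[P])_{G}=(T[Q])_{G}$. By construction $Q=Q_{\sigma}$, where $\sigma\in\Pi_{n}$ is the partition with block $G$ and all other blocks singletons.

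It remains to check that $\sigma\in T\setminus\{\hat{\mathbf1}_{n}\}$, which is where the order enters. Since $|T|_{\infty}=k$, there is $\alpha\in S\setminus\{(n)\}$ with $\alpha_{1}=k-1$. The shape $\shp(\sigma)=(|G|,1,\ldots,1)$ satisfies $\shp(\sigma)\preceq\alpha$: merge $|G|$ of the ones, together with the leftover singletons, appropriately into the parts of $\alpha$, using $|G|\leq k-1=\alpha_{1}$. Since $\alpha\neq(n)$ and $S$ is a lower set, $\shp(\sigma)\in S$, and so $\sigma\in T$ by Theorem \ref{charsymmetricpartitionposet}. Also $\sigma\neq\hat{\mathbf1}_{n}$, because $|G|\leq k-1\leq n-1$. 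Lemma \ref{iducedzero} then gives $T[Q]=T[Q_{\sigma}]=0$, hence $(T[P])_{G}=0$.

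The only mildly delicate point is the claim $\shp(\sigma)\preceq\alpha$ in $\operatorname{Par}(n)$. It is immediate because $(|G|,1,\ldots,1)$ refines any integer partition whose largest part is at least $|G|$: split $\alpha_{1}$ into $|G|$ and $\alpha_{1}-|G|$ ones, and split every other part into ones.
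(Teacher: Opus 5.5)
Your proof is correct and is essentially the paper's argument. The paper also takes $\sigma$ with one block of size at most $k-1$ and singletons elsewhere, uses $(P_{\pi})_{F}=(P_{\pi\wedge\sigma})_{F}$ to get $(T[P])_{F}=(T[P_{\sigma}])_{F}=0$ by Lemma \ref{iducedzero} (your auxiliary $Q$ is exactly $P_{\sigma}$), and differs from you only cosmetically: it treats $|F|=k-1$ and then marginalizes further, whereas you handle every $|G|\leq k-1$ directly and the total mass via Equation \ref{soma01}.
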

	
	\begin{proof} By hypothesis, and with the notation of Theorem \ref{charsymmetricpartitionposet}, there is some $\alpha=(k-1, \alpha_{2}, \ldots, \alpha_{\ell} ) \in S$. Since $(k-1,1^{n-k+1}) \preceq \alpha $, we obtain by the hypothesis on $T$ that  $(k-1,1^{n-k+1}) \in S$.\\
		Let $F=\{1, \ldots, k-1\}$   and define the partition  $\sigma=(F, \{k\}, \{k+1\}, \ldots, \{n\})$. Hence, $\shp(\sigma)= (k-1,1^{n-k+1}) $ and then $\sigma \in T \setminus\{\hat{\mathbf{1}}_{n}\}$, so Lemma \ref{iducedzero} gives $T[P_{\sigma}]=0$.\\
		However, for any $\pi \in T$ we have that  $(P_{\pi})_{F} = (P_{\pi \wedge \sigma})_{F}$, and then
		\[
		(T[P])_{F} = \left ( \sum_{\pi \in T} \mu(\pi, \hat{\mathbf{1}}_{n})P_{\pi}  \right )_{F}=\sum_{\pi \in T} \mu(\pi, \hat{\mathbf{1}}_{n}) (P_{\pi})_{F}  =\sum_{\pi \in T} \mu(\pi, \hat{\mathbf{1}}_{n})(P_{\pi \wedge \sigma})_{F}= (T[P_{\sigma}])_{F}=0.
		\]
		By the same argument, we can prove that this equality holds for any $F \subset \{1, \ldots, n\}$ with $|F|=k-1$, Marginalizing once more gives the same conclusion for smaller $F$, which concludes the proof.
	\end{proof}
	
	The algebraic identities and properties for probability interactions extend to  finite Radon measures on Hausdorff spaces by the same arguments, see \cite{Guella2025a}.

	We conclude this section by showing an equivalence of the  concept of symmetric partition lattice with  property $(iii)$ of Theorem \ref{streit}. 
	\begin{theorem} \label{aaa}
		Let $T \subset \Pi_{n}$ be a lower set and let $X_{i}$ be sets, $1\leq i \leq n$ with $|X_{i}| \geq n$. Then the property of $T $ being invariant under the symmetric group, as in Theorem \ref{charsymmetricpartitionposet}, is equivalent to property $(iii)$ of Theorem \ref{streit} for all probability measures on $\mathds{X}_{n}$.
	\end{theorem}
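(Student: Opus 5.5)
The plan is to prove the two implications separately. The forward one is a direct computation; the converse uses the marginally identifiable measure of Lemma \ref{marginalidentifiability} together with the uniqueness of coefficients it provides.

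\emph{Invariance of $T$ implies $(iii)$.} For a probability $P$ on $\mathds{X}_{n}$ and a bijection $\tau$, I first check the marginal identity $(P^{\tau})_{\pi}=(P_{\tau(\pi)})^{\tau}$, with the indexing convention of Theorem \ref{streit}. This holds because taking marginals on blocks and forming products commute with relabeling the coordinates; it is a routine check on product sets $\prod_{i}A_{\tau(i)}$. By Theorem \ref{charsymmetricpartitionposet}, $\mu(\pi,\hat{\mathbf 1}_{n})=H(\shp(\pi))$ and $\shp(\tau(\pi))=\shp(\pi)$, so the coefficients are invariant. Since $T$ is invariant, $\pi\mapsto\tau(\pi)$ is a bijection of $T$. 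Therefore
\[
T[P^{\tau}]=\sum_{\pi\in T}\mu(\pi,\hat{\mathbf 1}_{n})(P_{\tau(\pi)})^{\tau}=\Big(\sum_{\pi\in T}\mu(\tau(\pi),\hat{\mathbf 1}_{n})P_{\tau(\pi)}\Big)^{\tau}=(T[P])^{\tau}.
\]
This direction does not need $|X_{i}|\geq n$.

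\emph{$(iii)$ implies invariance of $T$.} Extend $\mu(\cdot,\hat{\mathbf 1}_{n})$ by zero outside $T$ and write $T[P]=\sum_{\pi\in\Pi_{n}}c_{\pi}P_{\pi}$. Two coordinate conventions are involved, since $P^{\tau}$ lives on $\prod_{i}X_{\tau(i)}$. To avoid mismatched spaces, I would identify all $X_{i}$ with a common set of size at least $n$, or at least choose points $x_{i}^{1},\dots,x_{i}^{p}\in X_{i}$ with $p=n$ and pull back through fixed injections. I then take $P$ as in Lemma \ref{marginalidentifiability}, built from the same index pattern in each coordinate, so that $P^{\tau}$ is again of that form and hence marginally identifiable. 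Applying the identity $(P^{\tau})_{\pi}=(P_{\tau(\pi)})^{\tau}$ in reverse, $(iii)$ gives
\[
\sum_{\pi}c_{\pi}(P_{\tau(\pi)})^{\tau}=\Big(\sum_{\pi}c_{\pi}P_{\pi}\Big)^{\tau},
\]
that is, $\sum_{\pi}(c_{\tau^{-1}(\pi)}-c_{\pi})P_{\pi}=0$ after undoing $\tau$. By linear independence, $c_{\tau^{-1}(\pi)}=c_{\pi}$ for all $\pi$, so the support of $c$ is invariant.

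It remains to recover $T$ from the support of $c$. The support need not equal $T$, since Möbius values can vanish; the Lancaster lattice has zeros, for example. Here I would use maximal elements. For $\pi\in\max(T)$, the recursion gives $\mu(\pi,\hat{\mathbf 1}_{n})=-1\neq 0$. Moreover, if $\tau(\pi)\in T\setminus\{\hat{\mathbf 1}_{n}\}$ then $\tau(\pi)\in\max(T)$, because any $\rho>\tau(\pi)$ in $T$ would give $\tau^{-1}(\rho)>\pi$ with coefficient determined by invariance. The cleaner argument is this: $c_{\tau(\pi)}=c_{\pi}=-1\neq0$ forces $\tau(\pi)\in T$. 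Hence $\max(T)$ is mapped into $T$ by every permutation. Since $T=S_{\max(T)}$ and permutations preserve the order, every element below $\tau(\pi)$ lies in $T$, and so $T$ is invariant.

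The main obstacle I expect is the bookkeeping in the converse. One must make precise that $P^{\tau}$ is defined on the permuted product and that one marginally identifiable $P$ can be used on both sides. I would handle this by working on the discrete support $\{x_{\alpha}\}$ indexed by $\alpha\in\{1,\dots,n\}^{n}$ and comparing the coefficients of $\delta_{x_{\alpha}}$ directly, as in the proof of Lemma \ref{marginalidentifiability}.
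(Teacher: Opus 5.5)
Your proposal is correct and follows essentially the same route as the paper. The forward direction is the same reindexing via $(P^{\tau})_{\pi}=(P_{\tau(\pi)})^{\tau}$ together with invariance of $\mu(\pi,\hat{\mathbf 1}_{n})$, and the converse likewise uses a marginally identifiable $P$ to obtain $\mu(\tau(\pi),\hat{\mathbf 1}_{n})=\mu(\pi,\hat{\mathbf 1}_{n})$, then uses that elements of $\max(T)$ have coefficient $-1$; your final step, mapping $\max(T)$ into $T\setminus\{\hat{\mathbf 1}_{n}\}$ and applying the lower-set property, is a slightly more direct version of the paper's identification of the maximal elements of the support with $\max(T)$, and you can drop the unfinished sentence before your ``cleaner argument''.
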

	
	\begin{proof}If $T$ is invariant under the symmetric group, we have  that 
		\[
		\begin{aligned}
			(T[P^{\tau}])&= \sum_{\pi \in T}\mu(\pi,\hat{\mathbf 1}_{n})(P^{\tau})_{\pi}=  \sum_{\sigma \in T}\mu(\tau^{-1}(\sigma),\hat{\mathbf 1}_{n})(P^{\tau})_{\tau^{-1}(\sigma)}= \sum_{\sigma \in T}\mu(\tau^{-1}(\sigma),\hat{\mathbf 1}_{n})(P_{\sigma})^{\tau}\\
			&= \sum_{\sigma \in T}\mu(\sigma,\hat{\mathbf 1}_{n})(P_{\sigma})^{\tau}= (T[P])^{\tau},
		\end{aligned}
		\]
		where the second equality occurs  since    $\pi \in T$ if and only if   $\tau^{-1}(\pi) \in T$, the third because  $(P_{\sigma})^{\tau}=(P^{\tau})_{\tau^{-1}(\sigma)}$ for any partition $\sigma$ and bijective $\tau$ and the fourth because   $\mu(\sigma,\hat{\mathbf 1}_{n}) = \mu(\tau^{-1}(\sigma),\hat{\mathbf 1}_{n})$.\\
		Conversely, suppose that $(T[P^{\tau}])=(T[P])^{\tau}$ for any bijection $\tau$ and probability $P$ of $\mathds{X}_{n}$. Since 
		\[
		(T[P^{\tau}])= \sum_{\pi \in T}\mu(\pi,\hat{\mathbf 1}_{n})(P^{\tau})_{\pi}, \quad  (T[P])^{\tau} =\sum_{\pi \in T}\mu(\pi,\hat{\mathbf 1}_{n})(P_{\pi})^{\tau} = \sum_{\pi \in T}\mu(\pi,\hat{\mathbf 1}_{n})(P^{\tau})_{\tau^{-1}(\pi)},
		\]
		by choosing a  marginally identifiable $P$ we obtain that $\mu(\pi,\hat{\mathbf 1}_{n})= \mu(\tau(\pi),\hat{\mathbf 1}_{n})$ for  all partitions $\pi$ and bijective $\tau$. To conclude,  the   maximal elements of  $ \{\pi \neq \hat{\mathbf 1}_{n}, \quad \mu(\pi,\hat{\mathbf 1}_{n}) \neq 0\}$  	are precisely the maximal elements of $T\setminus\{\hat{\mathbf 1}_{n}\}$, because  the  M\"obius coefficient of each such maximal element is $-1$. Hence  $\max(T)$, and therefore $T$, is invariant under 	the symmetric group.\end{proof}

	\section{Generalized Lancaster interaction}\label{GeneralizedLancasterinteraction}
	Using Equation \ref{latticegenerated}, define for $2\leq k \leq n $ the following symmetric partition lattice
	\[
	\Lambda_{k}^{n}:= \{ \pi \in \Pi_{n}, \quad \pi=\hat{\mathbf{1}}_{n} \text{ or }  \shp(\pi) \preceq (k-1,1^{n-k+1})  \},
	\]
	whose set of maximal elements is the set of $ \sigma  \in \Pi_{n}$ for which $\shp(\sigma)=(k-1,1^{n-k+1}) $.
	\begin{lemma} A partition $\pi$ belongs to $\Lambda_{k}^{n} \setminus\{\hat{\mathbf{1}}_{n}\}$ if and only if either  $\shp(\pi)= (1^{n})$ or  $\shp(\pi)= \alpha = (\alpha_{1}, \ldots, \alpha_{p}, 1^{n-a} )$, where $\alpha_{p} \geq 2$ and $a = \sum_{i=1}^{p}\alpha_{i} \leq  k-1$.
	\end{lemma}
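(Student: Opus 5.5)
By the definition of $\Lambda_{k}^{n}$, a partition $\pi\neq\hat{\mathbf{1}}_{n}$ lies in $\Lambda_{k}^{n}$ exactly when $\shp(\pi)\preceq(k-1,1^{n-k+1})$. So the lemma reduces to a statement about $\operatorname{Par}(n)$. I will show that an integer partition $\alpha$ satisfies $\alpha\preceq(k-1,1^{n-k+1})$ if and only if either $\alpha=(1^{n})$, or $\alpha=(\alpha_{1},\ldots,\alpha_{p},1^{n-a})$ with $\alpha_{p}\geq 2$ and $a=\sum_{i=1}^{p}\alpha_{i}\leq k-1$. Every $\alpha\in\operatorname{Par}(n)$ can be written uniquely in the second form with $p\geq 0$, where $p$ is the number of parts of size at least $2$. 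The case $p=0$ is exactly $(1^{n})$, so it suffices to prove that $\alpha\preceq(k-1,1^{n-k+1})$ if and only if $a\leq k-1$.

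\emph{Forward direction.} Suppose $\alpha\preceq\beta:=(k-1,1^{n-k+1})$. By Definition \ref{order[n]}, the indices of the parts of $\alpha$ are grouped into blocks $G_{1},\ldots,G_{\ell(\beta)}$ with $\sum_{j\in G_{i}}\alpha_{j}=\beta_{i}$. Every block $G_{i}$ with $i\geq 2$ has sum $\beta_{i}=1$, so it consists of a single part equal to $1$. Hence every part $\alpha_{j}\geq 2$ has its index in $G_{1}$. This gives $a\leq\sum_{j\in G_{1}}\alpha_{j}=k-1$. The edge case $k=2$, where $\beta=(1^{n})$, is covered because then no part $\geq 2$ can occur, so $a=0$.

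\emph{Reverse direction.} Suppose $a\leq k-1$. Put into $G_{1}$ the indices of all $p$ parts of size $\geq 2$, together with $k-1-a$ indices of parts equal to $1$. This is possible because the number of parts equal to $1$ is $n-a\geq k-1-a$. Then make each of the remaining $n-k+1$ unit parts its own block. The block sums are $k-1,1,\ldots,1$, which gives $\alpha\preceq\beta$.

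Alternatively, I could argue directly in $\Pi_{n}$. If $\pi\leq\sigma$ with $\shp(\sigma)=(k-1,1^{n-k+1})$, then every non-singleton block of $\pi$ is contained in the unique block of size $k-1$ of $\sigma$, so $a\leq k-1$. Conversely, merging all non-singleton blocks of $\pi$ with $k-1-a$ singletons produces such a $\sigma\geq\pi$. This uses the characterization of the maximal elements of $\Lambda_{k}^{n}$ stated above.

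I do not expect a real obstacle. The only point needing care is the bookkeeping in the reverse direction: checking that there are enough unit parts to complete the block of size $k-1$, which follows from $n-a\geq k-1-a$.
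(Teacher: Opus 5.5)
Your proof is correct and takes the same approach as the paper. The paper likewise reduces to $\operatorname{Par}(n)$ and observes that every part $\geq 2$ must fall into the block summing to $k-1$. You additionally spell out the explicit grouping for the reverse direction and the $k=2$ edge case, both of which the paper's one-line proof leaves implicit.
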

	
	\begin{proof}Note that $(\alpha_{1}, \ldots, \alpha_{p}, 1^{n-a} ) \preceq (k-1,1^{n-k+1}) $ where $\alpha_{p} \geq 2$ and $a := \sum_{i=1}^{p}\alpha_{i}$  if and only if  $a \leq k-1$, because all terms $\alpha_{j}$ must be in the same block of the grouping, by the definition of the order on $\operatorname{Par}(n)$ in Definition  \ref{order[n]}.
	\end{proof}
	
	In order to obtain the  M\"obius function of $	\Lambda_{k}^{n}$, we prove some binomial relations.
	
	\begin{lemma}\label{binomlanck} The following identities hold when $N-1 \geq L$ and $n\geq k \geq a+1   $
		\begin{equation}\label{binomlanckeq1}
			\sum_{J=0}^{L}(-1)^{J}\binom{N-1-J}{L-J}\binom{N}{J}= (-1)^{L},
		\end{equation}
		\begin{equation}\label{binomlanckeq2}
			\sum_{j = a} ^{k-1}(-1)^{k-j}\binom{n-j-1}{n-k}\binom{n-a}{j-a}=-1,
		\end{equation}
		\begin{equation}\label{binomlanckeq3}
			\sum_{j=2}^{k-1}(-1)^{k-j}\binom{n-j-1}{n-k}\binom{n}{j} =-1 - (-1)^{k}\left [ \binom{n-1}{n-k} -  n\binom{n-2}{n-k} \right ].
		\end{equation}
	\end{lemma}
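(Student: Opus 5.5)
The plan is to derive all three identities from a single Chu--Vandermonde computation. I will prove \eqref{binomlanckeq1} first and then obtain \eqref{binomlanckeq2} and \eqref{binomlanckeq3} from it by reindexing.

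For \eqref{binomlanckeq1}, I will use upper negation, $\binom{m}{r}=(-1)^{r}\binom{r-m-1}{r}$, which holds for any integer $m$ and any $r\geq 0$ with the polynomial definition of binomial coefficients. Take $m=N-1-J$ and $r=L-J\geq 0$. Then $r-m-1=L-N$, so
\[
\binom{N-1-J}{L-J}=(-1)^{L-J}\binom{L-N}{L-J}.
\]
The left-hand side of \eqref{binomlanckeq1} therefore becomes $(-1)^{L}\sum_{J=0}^{L}\binom{N}{J}\binom{L-N}{L-J}$. The Vandermonde convolution $\sum_{J}\binom{x}{J}\binom{y}{L-J}=\binom{x+y}{L}$ is a polynomial identity, so it holds for the negative upper index $L-N$. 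The terms with $J>L$ vanish, since they would require a negative lower index. Hence the sum equals $\binom{L}{L}=1$, and the total is $(-1)^{L}$.

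The only delicate point is the case where some upper index $N-1-J$ could be negative, but the hypothesis $N-1\geq L$ rules this out. Indeed, for $J\leq L$ we have $N-1-J\geq L-J\geq 0$. So both sides involve ordinary binomial coefficients and the negation step is legitimate. This bookkeeping is the main thing I expect to check carefully; the rest is reindexing.

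For \eqref{binomlanckeq2}, I will substitute $j=a+J$, $N=n-a$ and $L=k-1-a$. Then $J$ runs from $0$ to $L$ and $\binom{n-a}{j-a}=\binom{N}{J}$. Next, $\binom{n-j-1}{n-k}=\binom{N-1-J}{n-k}$, and by the symmetry $\binom{p}{q}=\binom{p}{p-q}$ this equals $\binom{N-1-J}{L-J}$, because $N-1-J-(n-k)=k-a-1-J=L-J\geq 0$. The sign is $(-1)^{k-j}=(-1)^{L+1-J}$. The hypothesis $n\geq k\geq a+1$ gives $L\geq 0$ and $N-1-L=n-k\geq 0$, so \eqref{binomlanckeq1} applies. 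The sum is then $(-1)^{L+1}(-1)^{L}=-1$.

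Finally, \eqref{binomlanckeq3} is \eqref{binomlanckeq2} with $a=0$, which is allowed because $k\geq 1$. That case gives $\sum_{j=0}^{k-1}(-1)^{k-j}\binom{n-j-1}{n-k}\binom{n}{j}=-1$. I will then remove the terms $j=0$ and $j=1$, which are $(-1)^{k}\binom{n-1}{n-k}$ and $(-1)^{k-1}n\binom{n-2}{n-k}$. Moving them to the right-hand side gives exactly
\[
-1-(-1)^{k}\left[\binom{n-1}{n-k}-n\binom{n-2}{n-k}\right].
\]
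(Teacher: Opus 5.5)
Your proof is correct. For \eqref{binomlanckeq2} and \eqref{binomlanckeq3} you do exactly what the paper does. That is the substitution $J=j-a$, $N=n-a$, $L=k-a-1$, then the symmetry $\binom{N-1-J}{N-1-L}=\binom{N-1-J}{L-J}$, then $a=0$ with the $j=0,1$ terms split off. So the only genuine difference is in \eqref{binomlanckeq1}.

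The paper proves \eqref{binomlanckeq1} with a generating function. It writes $(1+x)^{-1}=(1+x)^{N-1}\bigl(1-\tfrac{x}{1+x}\bigr)^{N}$ and expands by the binomial theorem. It then isolates the $J=N$ term $(-1)^{N}x^{N}(1+x)^{-1}$, which only affects powers $x^{L}$ with $L\geq N$. Comparing coefficients of $x^{L}$ for $L\leq N-1$ with those of $\sum_{L}(-1)^{L}x^{L}$ finishes the proof. The hypothesis $N-1\geq L$ is used exactly to discard that isolated term.

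You instead use upper negation to rewrite the summand as $(-1)^{L}\binom{N}{J}\binom{L-N}{L-J}$. Chu--Vandermonde, valid for a negative upper index as a polynomial identity, then collapses the sum to $\binom{L}{L}=1$. In your route, $N-1\geq L$ only ensures that the binomials in the statement are ordinary ones, so your sign manipulation matches the statement's convention.

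Your argument is shorter and shows what the identity really is, namely $\binom{L}{L}=1$ in disguise, at the cost of using generalized binomial coefficients. The paper's argument is self-contained and keeps all upper indices nonnegative. The underlying mechanism, coefficient extraction from products of powers of $1+x$, is the same one that proves Vandermonde.

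One small bookkeeping remark: splitting off both the $j=0$ and $j=1$ terms in \eqref{binomlanckeq3} tacitly needs $k\geq 2$, not just $k\geq 1$. The paper makes the same tacit assumption, and $k\geq 2$ is the only case where the lemma is applied.
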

	\begin{proof}
		Note that
		\[
		\begin{aligned}
			(1+x)^{-1} &=  (1+x)^{N-1} \left ( 1 - \frac{x}{1+x}\right )^{N}= \sum_{J=0}^{N}(-1)^{J} \binom{N}{J}x^{J}(1+x)^{N-1-J}\\
			&= (-1)^{N}x^{N}(1+x)^{-1} +   \sum_{J=0}^{N-1}(-1)^{J} \binom{N}{J}x^{J}(1+x)^{N-1-J}.
		\end{aligned}
		\]
		The first term can be written as  $(-1)^{N}x^{N}(1+x)^{-1}= \sum_{L=N}^{\infty}(-1)^{L}x^{L}$, while the second term satisfies
		\[
		\begin{aligned}
			\sum_{J=0}^{N-1}(-1)^{J} \binom{N}{J}x^{J}(1+x)^{N-1-J}&= \sum_{J=0}^{N-1} \sum_{l=0}^{N-1-J}(-1)^{J} \binom{N-1-J}{l}\binom{N}{J}x^{J+l} \\
			&=\sum_{L=0}^{N-1} \left [ \sum_{J=0}^{L}(-1)^{J}\binom{N-1-J}{L-J}\binom{N}{J} \right ] x^{L}.
		\end{aligned}
		\]
		Thus, comparing coefficients of $x^{L}$ for $0\leq L\leq N-1$ with those in $(1+x)^{-1}=\sum_{L=0}^{\infty}(-1)^{L}x^{L}$, we obtain the desired equality.\\
		For the second relation, by Equation \ref{binomlanckeq1}   and the change of variables $J: =j-a$, $N : =n-a$ and $L:=k-a-1$, we obtain
		\[
		\begin{aligned}
			\sum_{j = a}^{k-1}(-1)^{k-j}\binom{n-j-1}{n-k}\binom{n-a}{j-a}&= \sum_{J = 0}^{L}(-1)^{L-J+1}\binom{N-J-1}{N-L-1}\binom{N}{J}\\
			&=	(-1)^{L+1}\sum_{J = 0}^{L}(-1)^{J}\binom{N-J-1}{L-J}\binom{N}{J}= -1.
		\end{aligned}
		\]
		To obtain Equation \ref{binomlanckeq3}, if we set $a=0$ in Equation \ref{binomlanckeq2}, we obtain
		\[
		-1 = (-1)^{k}\binom{n-1}{n-k} + (-1)^{k-1}n\binom{n-2}{n-k}	+\sum_{j = 2} ^{k-1}(-1)^{k-j}\binom{n-j-1}{n-k}\binom{n}{j},
		\]
		which concludes the proof.\end{proof}
	
	\begin{theorem}The M\"obius function at $ \pi=(F_{1}, \ldots, F_{\ell}) \in \Lambda_{k}^{n} $ is
		\[
		\mu(\pi, \hat{\mathbf{1}}_{n})=
		\begin{cases}
			1, &  \text{ if } \pi = \hat{\mathbf{1}}_{n}\\
			0,& \text{ if } |F_{2}| \geq 2,\\
			(-1)^{k-|F_{1}|}\binom{n-|F_{1}|-1}{n-k}, & \text{ if } |F_{2}|=1 \text{ and } k-1 \geq |F_{1}|\geq 2,\\
			(-1)^{k}\left[\binom{n-1}{n-k} - n\binom{n-2}{n-k}  \right ]& \text{ if } \pi= \hat{\mathbf{0}}_{n}
		\end{cases}
		\]
	\end{theorem}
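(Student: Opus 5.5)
We compute $\mu(\pi,\hat{\mathbf 1}_{n})$ by descending through the level sets of $\Lambda_{k}^{n}$. We use the recursion $\mu(\pi,\hat{\mathbf 1}_{n})=-\sum_{x>\pi,\,x\in\Lambda_{k}^{n}}\mu(x,\hat{\mathbf 1}_{n})$, equivalently Theorem \ref{charsymmetricpartitionposet}. By the preceding lemma, every proper partition in $\Lambda_{k}^{n}$ has shape $(\alpha_{1},\ldots,\alpha_{p},1^{n-a})$ with all $\alpha_{i}\geq 2$ and $a\leq k-1$, or is $\hat{\mathbf 0}_{n}$. The cases of the statement are therefore: $p=1$ (one nontrivial block of size $a=|F_{1}|$), $p\geq2$ (so $|F_{2}|\geq2$), and $p=0$ (the bottom element).

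\textbf{Case $p=1$.} Let $\pi$ have one block $F_{1}$ of size $a$, with $2\leq a\leq k-1$. We argue by downward induction on $a$. For $a=k-1$, $\pi$ is maximal, so $\mu=-1=(-1)^{1}\binom{n-k}{n-k}$, which matches the formula. For smaller $a$, the elements $x>\pi$ in $\Lambda_{k}^{n}\setminus\{\hat{\mathbf 1}_{n}\}$ are of two kinds. Some have a single nontrivial block $G\supsetneq F_{1}$ with $|G|=j\leq k-1$; there are $\binom{n-a}{j-a}$ of these for each $j$. The others have several nontrivial blocks. Assuming inductively that the latter contribute $0$, we get
\[
\mu(\pi,\hat{\mathbf 1}_{n})=-1-\sum_{j=a+1}^{k-1}(-1)^{k-j}\binom{n-j-1}{n-k}\binom{n-a}{j-a}.
\]
By Equation \ref{binomlanckeq2} this equals $(-1)^{k-a}\binom{n-a-1}{n-k}$: the full sum from $j=a$ is $-1$, so the sum from $j=a+1$ is $-1-(-1)^{k-a}\binom{n-a-1}{n-k}$. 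The subtlety is that this induction has to be run jointly with the vanishing claim, ordered by the total size $a$ of the nontrivial blocks. Partitions above $\pi$ always have a strictly larger or equal $a$, and larger $a$ has already been treated.

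\textbf{Case $p\geq2$.} We want $\mu=0$. The cleanest route is Theorem \ref{generalized_orthogonality} in the form \eqref{somax1} applied to an interval. Let $N$ be the union of the nontrivial blocks of $\pi$, so $|N|=a\leq k-1$. Every element of $\Lambda_{k}^{n}$ strictly above $\pi$ other than $\hat{\mathbf 1}_{n}$ merges some blocks of $\pi$ into blocks inside a set of size at most $k-1$. Let $\rho$ be the partition with blocks $N$ and singletons. Then $\rho\in\Lambda_{k}^{n}$ and $\rho>\pi$ because $p\geq 2$. Also, for any proper $x\geq\pi$, the join $x\vee\rho$ lies in $\Lambda_{k}^{n}$ exactly when $x$ has only one nontrivial block. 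I would rather do this directly. Write $\mu(\pi,\hat{\mathbf 1})=-\sum_{\pi\leq x<\hat{\mathbf 1}}\mu(\pi,x)$ and use Remark \ref{maiorelementosubposet}: $\mu(\pi,x)$ equals the $\Pi_{n}$ value, a product of $(-1)^{m-1}(m-1)!$ over the blocks. Grouping the $x\geq\pi$ by the block $G$ of $x$ containing the merged nontrivial material reduces this to sums over partitions of the blocks of $\pi$ lying inside a set of size at most $k-1$. Each such sum is a full Möbius sum over an interval $[\pi|_{G},\hat{\mathbf 1}_{G}]$ of a partition lattice, hence zero, unless that interval is trivial. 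Because $p\geq 2$, the two nontrivial blocks ensure a nontrivial interval appears in every grouping. I expect this bookkeeping to be the main obstacle. The fallback is the joint induction above: show that the sum of $\mu(x,\hat{\mathbf 1}_{n})$ over $x>\pi$ is $-1$ by pairing contributions through the one-block case formula and Equation \ref{binomlanckeq2}, applied with $a$ replaced by the total size of the blocks being absorbed.

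\textbf{Case $\hat{\mathbf 0}_{n}$.} The elements above $\hat{\mathbf 0}_{n}$ with nonzero coefficient are $\hat{\mathbf 1}_{n}$ and the partitions with one nontrivial block of size $j$, $2\leq j\leq k-1$; there are $\binom{n}{j}$ of these for each $j$. Hence
\[
\mu(\hat{\mathbf 0}_{n},\hat{\mathbf 1}_{n})=-1-\sum_{j=2}^{k-1}(-1)^{k-j}\binom{n-j-1}{n-k}\binom{n}{j},
\]
and Equation \ref{binomlanckeq3} turns this into $(-1)^{k}\bigl[\binom{n-1}{n-k}-n\binom{n-2}{n-k}\bigr]$.
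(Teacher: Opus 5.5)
Your one-block computation and your computation at $\hat{\mathbf 0}_{n}$ are correct and coincide with the paper's: the same sums, closed by Equations~\ref{binomlanckeq2} and~\ref{binomlanckeq3}. The gap is the case $p\geq2$. Both of those computations depend, through the joint induction, on partitions with two or more nontrivial blocks having coefficient $0$, and you never prove this: you sketch three routes and finish none.

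You dropped the first route because of a false claim. Take $x\in\Lambda_{k}^{n}\setminus\{\hat{\mathbf 1}_{n}\}$ with $x\geq\pi$. Every block of $x$ meeting $N$ contains some $F_{i}$, so it is nontrivial. Hence $x\vee_{\Pi_{n}}\rho$ has the same nontrivial support as $x$, of size at most $k-1<n$. It therefore lies in $\Lambda_{k}^{n}\setminus\{\hat{\mathbf 1}_{n}\}$ however many nontrivial blocks $x$ has; for example, $\pi\vee\rho=\rho$. So in the lattice $[\pi,\hat{\mathbf 1}_{n}]\subseteq\Lambda_{k}^{n}$, the only $x$ with $x\vee\rho=\hat{\mathbf 1}_{n}$ is $\hat{\mathbf 1}_{n}$ itself. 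Weisner's theorem, $\sum_{x\vee\rho=\hat{\mathbf 1}}\mu(\pi,x)=0$ for $\rho>\pi$, is the dual form of Theorem~\ref{generalized_orthogonality}. It then gives $\mu(\pi,\hat{\mathbf 1}_{n})=0$ in one line, with no induction.

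Your second route, as described, does not produce full interval sums. Group by the union $U$ of the nontrivial blocks of $x$. The sum over $x$ with exactly that $U$ runs only over singleton-free coarsenings of $\pi|_{U}$. What is a full M\"obius sum over $[\pi|_{U'},\hat{\mathbf 1}_{U'}]$, and hence $0$ because $p\geq2$, is the cumulative sum over $U(x)\subseteq U'$. You would still need M\"obius inversion over the sets $N\subseteq U\subseteq U'$ to conclude.

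Your fallback is the paper's argument, but its decisive step is missing. Its target is also misstated: the sum of $\mu(x,\hat{\mathbf 1}_{n})$ over $\pi<x<\hat{\mathbf 1}_{n}$ must be $-1$, so that the sum over all $x>\pi$ is $0$. By induction, the only proper $x>\pi$ with nonzero coefficient are the partitions with a single nontrivial block $G$, and $x\geq\pi$ forces $G\supseteq N$. Now $j=|G|$ ranges over $a\leq j\leq k-1$, \emph{including} $j=a$. That case is the partition $\rho$, strictly above $\pi$ because $p\geq 2$. There are $\binom{n-a}{j-a}$ choices for each $j$. So Equation~\ref{binomlanckeq2} applies with the same $a$, not a modified one, over its full range, and gives exactly $-1$. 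This cancels $\mu(\hat{\mathbf 1}_{n},\hat{\mathbf 1}_{n})=1$. That is precisely the difference from the case $p=1$, where $j=a$ is excluded.

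Finally, ordering by $a$ alone does not suffice here. The partition $\rho$, and the partitions obtained by merging nontrivial blocks of $\pi$, lie above $\pi$ with the same $a$. Use the number of nontrivial blocks as a secondary key, treating $p=1$ first, or induct along the level sets as the paper does.
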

	
	\begin{proof} We prove the formula by induction on the level sets of the set $S_{k} \subset \operatorname{Par}(n)$ obtained through Theorem \ref{charsymmetricpartitionposet}.\\ By definition  $H((n))=\mu(\hat{\mathbf{1}}_{n}, \hat{\mathbf{1}}_{n})=1$. Now, if $\alpha \in I_{2}$ then $\alpha= (k-1, 1^{n-k+1})$, hence,  $H(\alpha) =-1$.\\
		Now,  let  $\alpha=(\alpha_{1}, \ldots, \alpha_{p}, 1^{n-a} ) \in I_{h}$  with $\alpha_{p}\geq2$ and $a=\sum_{i=1}^{p}\alpha_{i}$ (more precisely, although this is not needed for the proof, $h=p+k-a $).
		By the induction hypothesis, if  $\beta= (\beta_{1}, \ldots, \beta_{q}, 1^{n-b}) \in 	S_{k}\setminus{\{(n)\}} $ with $\beta_{q}\geq 2$ and $\beta \succ \alpha$, then either $q\geq 2$ and  $H(\beta)=0$, or $q=1$ and the possible shapes are   $\beta= (j, 1^{n-j})$ for some $j=a+1, \ldots, k-1$ (if $p=1$)  or $\beta= (j, 1^{n-j})$ for some $j=a, \ldots, k-1$ (if $p\geq 2$). Hence, by  using Equation \ref{binomlanckeq2} for the case $p=1$
		\[
		\begin{aligned}
			H(\alpha)&= -\sum_{\beta \succ \alpha}H(\beta)m(\alpha, \beta)=-H((n))m(\alpha, (n)) -\sum_{j=a+1}^{k-1}H((j, 1^{n-j}))m(\alpha, (j, 1^{n-j}))\\
			&=-1   -\sum_{j=a+1}^{k-1}(-1)^{k-j}\binom{n-j-1}{n-k}\binom{n-a}{n-j}=(-1)^{k-a}\binom{n-a-1}{n-k}.
		\end{aligned}
		\]
		The case $p\geq 2$ is also a consequence of Equation \ref{binomlanckeq2}, as
		\[
		\begin{aligned}
			H(\alpha)&= -\sum_{\beta \succ \alpha}H(\beta)m(\alpha, \beta)=-H((n))m(\alpha, (n)) -\sum_{j=a}^{k-1}H((j, 1^{n-j}))m(\alpha, (j, 1^{n-j}))\\
			&=-1   -\sum_{j=a}^{k-1}(-1)^{k-j}\binom{n-j-1}{n-k}\binom{n-a}{n-j}=0.
		\end{aligned}
		\]
		Lastly, if $\alpha=(1^{n})$,  we may only sum over the terms $\beta=(j, 1^{n-j})$ for  $2 \leq j \leq k-1$, by   Equation  \ref{binomlanckeq3}
		\[
		\begin{aligned}
			H((1^{n}))&=- H((n))m((1^{n}), (n)) - \sum_{j=2}^{k-1}H((j, 1^{n-j})) m((1^{n}),(j, 1^{n-j}))\\
			&= -1 -  \sum_{j=2}^{k-1}(-1)^{k-j}\binom{n-j-1}{n-k}\binom{n}{j} =  (-1)^{k}\left [ \binom{n-1}{n-k} -  n\binom{n-2}{n-k} \right ].
		\end{aligned}
		\]
	\end{proof}
	
	Hence, as defined in Equation \ref{interaction}, for a probability  $P \in \mathcal{M}(\mathds{X}_{n})$, we define its interaction with respect to $\Lambda_{k}^{n}$ as
	\[
	\Lambda_{k}^{n}[P] = \sum_{\pi \in \Lambda_{k}^{n}}\mu(\pi,\hat{\mathbf1}_{n})P_{\pi}= P + \sum_{\ell=1}^{k-1}H((\ell,1^{n-\ell})) \sum_{ \shp(\pi)= (\ell, 1^{n-\ell}) }P_{\pi}.
	\]
	However, for $\ell \geq 2$
	\[
	H((\ell,1^{n-\ell}))\sum_{ \shp(\pi)= (\ell, 1^{n-\ell}) }P_{\pi}= (-1)^{k-\ell}\binom{n-\ell-1}{n-k}\sum_{|F|=\ell}P_{F}\times \left [\bigtimes_{i \in F^{c}}P_{i} \right ]
	\]
	and the case $\ell=1$ can be conveniently rewritten as
	\[
	H((1^{n}))\left [\bigtimes_{i=1}^{n}P_{i}\right ]= \sum_{\ell=0}^{1}(-1)^{k-\ell}\binom{n-\ell-1}{n-k}\sum_{|F|=\ell}P_{F}\times \left [\bigtimes_{i \in F^{c}}P_{i} \right ].
	\]
	Thus, we obtain the final and simplified expression
	\[
	\Lambda_{k}^{n}[P] = P + \sum_{\ell=0}^{k-1}(-1)^{k-\ell}\binom{n-\ell-1}{n-k}\sum_{|F|=\ell}P_{F}\times \left [\bigtimes_{i \in F^{c}}P_{i} \right ],
	\]
	which was initially proposed in   \cite{Guella2025a}. The same reference also presents a generalization of this definition for which we do not currently have a lattice interpretation. For probability measures $P,Q \in \mathcal{M}(\mathds{X}_{n})$, it is defined by
	\[
	\Lambda_{k}^{n}[P,Q] := P + \sum_{\ell=0}^{k-1}(-1)^{k-\ell}\binom{n-\ell-1}{n-k}\sum_{|F|=\ell}P_{F}\times Q_{F^{c}},
	\]
	and $	\Lambda_{k}^{n}[P]=\Lambda_{k}^{n}[P, \bigtimes_{i =1}^{n}P_{i}] $.
	
	The following properties were proved in \cite{Guella2025a} without using the lattice interpretation developed here. In property $(iii)$, we use that $\Lambda_{1}^{1}[Q]=0$ for a one-coordinate probability measure $Q$.
	
	\begin{theorem}\label{generallancaster} The generalized Lancaster interaction $\Lambda_{k}^{n}$ satisfies the following properties:
		\begin{enumerate}
			\item [$i)$] For discrete probability measures $P, Q \in \mathcal{M}(\mathds{X}_{n})$, we have $\Lambda_{k}^{n}[P,Q]\in\mathcal M_{k}(\mathds X_{n})$.
			\item [$ii)$] If for some $2\leq k \leq n-1$ we have $\Lambda_{k}^{n}[P]=0$, then $\Lambda_{k+1}^{n}[P]=0$.
			\item [$iii)$] $\Lambda_{n}^{n}[P]$ is multiplicative in the sense that if $P= P_{\pi}$ for some partition $\pi= (F_{1}, \ldots, F_{\ell})$ of $\{1, \ldots, n\}$, then $\Lambda_{n}^{n}[P]= \bigtimes_{i=1}^{\ell}\Lambda_{|F_{i}|}^{|F_{i}|}[P_{F_{i}}]$.
			\item[$iv)$] For any $G \subset\{1, \ldots, n\}$ with $|G|\geq k$, we have $\left ( \Lambda_{k}^{n}[P]\right )_{G}= \Lambda_{k}^{|G|}[P_{G}]$.
			\item [$v)$]  The symmetric partition lattice $\Lambda_{k}^{n}$ satisfies Theorem \ref{genStreitbergvectorspace} and Theorem \ref{genStreitbergcartesianproduct}.
		\end{enumerate}
	\end{theorem}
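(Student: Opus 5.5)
We prove the five items separately. All of them come from expanding the explicit formula
\[
\Lambda_{k}^{n}[P,Q]=P+\sum_{\ell=0}^{k-1}(-1)^{k-\ell}\binom{n-\ell-1}{n-k}\sum_{|F|=\ell}P_{F}\times Q_{F^{c}}
\]
and marginalizing term by term. We never need the lattice structure beyond what has already been established.

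\textbf{Items $v)$, $iv)$ and $i)$.} Item $v)$ is immediate. The maximal proper shape of $\Lambda_{k}^{n}$ is $(k-1,1^{n-k+1})$, so $|\Lambda_{k}^{n}|_{\infty}=k$. Hence Theorems \ref{genStreitbergvectorspace} and \ref{genStreitbergcartesianproduct} apply directly to $\Lambda_{k}^{n}$.

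For $iv)$ and $i)$, fix $G$ and compute $(P_{F}\times Q_{F^{c}})_{G}=P_{F\cap G}\times Q_{G\setminus F}$. Then group the terms according to $E:=F\cap G$, with $|E|=h$. The number of $F$ with $|F|=\ell$ and $F\cap G=E$ is $\binom{n-|G|}{\ell-h}$.
\begin{itemize}
\item For $iv)$, with $Q=\bigtimes_{i}P_{i}$ and $|G|=m\geq k$, we must show that for each $h\leq k-1$
\[
\sum_{\ell=h}^{k-1}(-1)^{k-\ell}\binom{n-\ell-1}{n-k}\binom{n-m}{\ell-h}=(-1)^{k-h}\binom{m-h-1}{m-k}.
\]
The term $P$ marginalizes to $P_{G}$, which matches the leading term of $\Lambda_{k}^{m}[P_{G}]$.
\item For $i)$ it suffices to take $|G|=k-1$, since smaller sets follow by further marginalization. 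Now every $F$ with $|F|=\ell$ contributes, and the term $P$ contributes $P_{G}$ (the case $h=k-1$). The required statement is that the analogous coefficient sum equals $0$ for $h<k-1$, and equals $-1$ for $h=k-1$ so that it cancels $P_{G}$.
\end{itemize}
Both are Vandermonde-type identities. I would prove them by the generating-function method of Lemma \ref{binomlanck}: write $\binom{n-\ell-1}{n-k}$ as a coefficient of $(1+x)^{n-\ell-1}$ and sum the resulting geometric-binomial series. I expect this binomial bookkeeping to be the main technical step.

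\textbf{Item $iii)$.} Suppose $P=P_{\pi}$ with $\pi=(F_{1},\ldots,F_{\ell})$. Then $P_{F}=\bigtimes_{j}P_{F\cap F_{j}}$ for every $F$, and $P_{i}$ is the $i$-th marginal of the corresponding $P_{F_{j}}$. Writing $E_{j}:=F\cap F_{j}$, the Lancaster expansion factorizes blockwise:
\[
\Lambda_{n}^{n}[P]=\sum_{F}(-1)^{n-|F|}P_{F}\times\bigtimes_{i\notin F}P_{i}
=\bigtimes_{j=1}^{\ell}\Big(\sum_{E\subseteq F_{j}}(-1)^{|F_{j}|-|E|}P_{E}\times\bigtimes_{i\in F_{j}\setminus E}P_{i}\Big).
\]
This uses $n-|F|=\sum_{j}(|F_{j}|-|E_{j}|)$ and $P_{\emptyset}=1$. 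Each factor is $\Lambda_{|F_{j}|}^{|F_{j}|}[P_{F_{j}}]$, using the convention $\Lambda_{1}^{1}[Q]=0$ for singleton blocks; there the factor is $Q-Q=0$.

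\textbf{Item $ii)$.} The plan is to find an explicit identity expressing $\Lambda_{k+1}^{n}[P]$ as a linear combination of the measures
\[
\Lambda_{k}^{|G|}[P_{G}]\times\bigtimes_{i\notin G}P_{i},\qquad |G|\geq k.
\]
I would look for it by comparing coefficients of each $P_{F}\times\bigtimes_{i\in F^{c}}P_{i}$. This reduces to a binomial identity relating $\binom{n-\ell-1}{n-k-1}$ to the coefficients $\binom{|G|-\ell-1}{|G|-k}$, which I would again verify with generating functions. Once the identity holds, the hypothesis $\Lambda_{k}^{n}[P]=0$ combined with $iv)$ gives $\Lambda_{k}^{|G|}[P_{G}]=(\Lambda_{k}^{n}[P])_{G}=0$ for all $|G|\geq k$. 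Every term then vanishes, so $\Lambda_{k+1}^{n}[P]=0$.

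The main obstacle is finding this decomposition identity. If it cannot be found directly, the fallback is the simplest candidate $G=\{1,\ldots,n\}$ with a single correction term, namely a multiple of $\sum_{|G|=k}\Lambda_{k}^{k}[P_{G}]\times\bigtimes_{i\notin G}P_{i}$, and to check whether the coefficients match.
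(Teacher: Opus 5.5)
The paper does not actually prove this theorem. Items $i)$--$iv)$ are quoted from \cite{Guella2025a}, and $v)$ is just the observation that $\Lambda_{k}^{n}$ has order $k$, which you handle correctly. Your term-by-term computation is therefore a self-contained alternative, and for $i)$, $iii)$ and $iv)$ it is sound.

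The coefficient identities you leave to generating functions are true. Both are instances of
\[
\sum_{j=0}^{N}(-1)^{j}\binom{N}{j}\binom{A-j}{B}=\binom{A-N}{B-N},
\]
the $N$-th backward difference of $x\mapsto\binom{x}{B}$, with $N=n-|G|$, $A=n-h-1$, $B=n-k$ and $j=\ell-h$. Rewriting your sum over $0\le j\le N$ is harmless, since terms with $j>k-1-h$ vanish because $0\le A-j<B$.
\begin{itemize}
\item For $|G|=m\geq k$ this gives $(-1)^{k-h}\binom{m-h-1}{m-k}$, as needed in $iv)$.
\item For $|G|=k-1$ one has $N=B+1$, so the sum is $0$ when $h\le k-2$ (here $A-N=k-2-h\ge 0$). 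The case $h=k-1$ has the single term $-1$, which cancels $P_{G}$ as you say.
\end{itemize}
Your blockwise factorization for $iii)$ is correct as written. Your treatment of $i)$ also correctly handles general $Q$, which the lattice-based Theorem \ref{genStreitbergvectorspace} does not cover.

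The genuine gap is $ii)$. You describe a strategy but never establish the decomposition, so as written $ii)$ is unproved. Your fallback candidate is in fact exactly right, with coefficients $1$ and $-1$.

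Put $Q:=P_{1}\times\cdots\times P_{n}$ and $D_{F}:=P_{F}\times Q_{F^{c}}$. Then $\Lambda_{k}^{k}[P_{G}]\times Q_{G^{c}}=\sum_{F\subseteq G}(-1)^{k-|F|}D_{F}$, hence
\[
\sum_{|G|=k}\Lambda_{k}^{k}[P_{G}]\times Q_{G^{c}}=\sum_{|F|\leq k}(-1)^{k-|F|}\binom{n-|F|}{n-k}D_{F}.
\]
Comparing coefficients of each $D_{F}$ shows, for $2\le k\le n-1$,
\[
\Lambda_{k+1}^{n}[P]=\Lambda_{k}^{n}[P]-\sum_{|G|=k}\Lambda_{k}^{k}[P_{G}]\times Q_{G^{c}}.
\]
The three cases are:
\begin{itemize}
\item $F=\{1,\ldots,n\}$ gives $1=1-0$.
\item $|F|=k$ gives $-1=0-1$.
\item $|F|=\ell\le k-1$ requires $-\binom{n-\ell-1}{n-k-1}=\binom{n-\ell-1}{n-k}-\binom{n-\ell}{n-k}$, which is Pascal's rule.
\end{itemize}
If $\Lambda_{k}^{n}[P]=0$, then $iv)$ gives $\Lambda_{k}^{k}[P_{G}]=(\Lambda_{k}^{n}[P])_{G}=0$ for every $|G|=k$, and therefore $\Lambda_{k+1}^{n}[P]=0$. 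Adding this identity closes your argument.
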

	
	We emphasize relation $ii)$, for which we have not yet found another family of interactions sharing a similar property. We also emphasize the decomposition in property $(iii)$.
	
	The radial kernel characterizations and their simplifications in \cite{Guella2025a} give the following equivalences.
	
	\begin{theorem}\label{bernsksevndimpart3} Let $n\geq k\geq2$, and let $g:[0, \infty)^{n} \to \mathbb{R}$ be a continuous function. The following conditions are equivalent:
		\begin{enumerate}
			\item [$(i)$] For any $d\in \mathbb{N}$ and discrete signed  measures $\mu_{i}$ on $\mathbb{R}^{d}$, $1\leq i \leq n$, with the restriction that $|\{i, \quad \mu_{i}(\mathbb{R}^{d})=0\}| \geq k $,  it holds that
			\[
			\int_{(\mathbb R^{d})^{n}}\int_{ (\mathbb R^{d})^{n}}(-1)^{k}g(\|x_{1}-y_{1}\|^{2}, \ldots, \|x_{n} - y_{n}\|^{2})d[\bigtimes_{i=1}^{n}\mu_{i}](x)d[ \bigtimes_{i=1}^{n}\mu_{i}](y)\geq 0.
			\]
			\item [$(ii)$] For any $d\in \mathbb{N}$ and nonzero discrete probability measure $P$ on $(\mathbb R^{d})^{n}$,  it holds that
			\[
			\int_{(\mathbb R^{d})^{n}}\int_{ (\mathbb R^{d})^{n}}(-1)^{k}g(\|x_{1}-y_{1}\|^{2}, \ldots, \|x_{n} - y_{n}\|^{2})d[\Lambda_{k}^{n}[P] ](x)d[\Lambda_{k}^{n}[P]](y)\geq 0.
			\]
			\item [$(iii)$]  For any $d\in \mathbb{N}$ and discrete signed measure $\eta$ in $\mathcal M_{k}((\mathbb R^{d})^{n})$,  it holds that
			\[
			\int_{(\mathbb R^{d})^{n}}\int_{ (\mathbb R^{d})^{n}}(-1)^{k}g(\|x_{1}-y_{1}\|^{2}, \ldots, \|x_{n} - y_{n}\|^{2})d\eta(x)d\eta (y)\geq 0.
			\]
		\end{enumerate}
	\end{theorem}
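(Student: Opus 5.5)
The plan is to prove the cycle $(iii)\Rightarrow(ii)\Rightarrow(i)\Rightarrow(iii)$. Throughout, write $\Phi(\eta)$ for the double integral of $(-1)^{k}g(\|x_{1}-y_{1}\|^{2},\ldots,\|x_{n}-y_{n}\|^{2})$ against $\eta\times\eta$. In this notation each condition says that $\Phi$ is nonnegative on some class of discrete signed measures, for every dimension $d$.

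\textbf{The two easy implications.}
\begin{itemize}
\item For $(iii)\Rightarrow(ii)$, Theorem \ref{generallancaster} $(i)$ with $Q=\bigtimes_{i}P_{i}$ gives $\Lambda_{k}^{n}[P]\in\mathcal M_{k}((\mathbb R^{d})^{n})$. Applying $(iii)$ to $\eta=\Lambda_{k}^{n}[P]$ yields $(ii)$. Alternatively, one can invoke Theorem \ref{genStreitbergvectorspace}, since $\Lambda_{k}^{n}$ is a symmetric partition lattice of order $k$.
\item For $(ii)\Rightarrow(i)$, take discrete $\mu_{i}$ with at least $k$ of them having zero total mass. Theorem \ref{genStreitbergcartesianproduct}, as recorded in Theorem \ref{generallancaster} $(v)$, produces $M>0$ and a probability $P$ with $\Lambda_{k}^{n}[P]=M(-1)^{n}\bigtimes_{i}\mu_{i}$. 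The construction in that proof only mixes the Hahn--Jordan parts of the $\mu_{i}$, so $P$ is again discrete on $(\mathbb R^{d})^{n}$. Since $\Phi$ is quadratic, $\Phi(\Lambda_{k}^{n}[P])=M^{2}\Phi(\bigtimes_{i}\mu_{i})$, and $(i)$ follows.
\end{itemize}

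\textbf{The implication $(i)\Rightarrow(iii)$.} This is the genuine content and the main obstacle. Plain approximation is hopeless, because $\mathcal M_{k}$ is not spanned by product measures $\bigtimes\mu_{i}$ in a way compatible with a quadratic form. Even if $\eta=\sum_{r}c_{r}\bigtimes_{i}\mu_{i}^{r}$, the cross terms of $\Phi$ are not controlled by the diagonal ones.

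The route I would take is through the radial kernel characterization of \cite{Guella2025a}. First I would show that $(i)$, holding for all $d$, forces a Bernstein/Schoenberg-type integral representation of $g$. Concretely, $g$ would be a mixture of functions of the form $\prod_{i}e^{-r_{i}t_{i}}$ over $r\in[0,\infty)^{n}$, together with boundary terms in which some coordinates are constant, i.e.\ polynomial of degree less than $1$ in the variables where $\mu_{i}(\mathbb R^{d})=0$ is imposed.

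To obtain this representation I would test $(i)$ with $\mu_{i}$ that are differences of Dirac measures in the zero-mass coordinates and Dirac measures in the remaining coordinates. This gives conditional positive definiteness of order $k$ for the kernel $(x,y)\mapsto g(\|x_{1}-y_{1}\|^{2},\ldots)$. Letting $d\to\infty$, a multivariate Schoenberg theorem then applies to $g$, with the ``$k$ out of $n$'' structure selecting which boundary terms are allowed.

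\textbf{Concluding from the representation.} With the representation in hand, $(iii)$ follows directly. For each Gaussian factor, $\Phi$ restricted to $\prod_{i}e^{-r_{i}\|x_{i}-y_{i}\|^{2}}$ is a Fourier integral $\int|\widehat{\eta}(\xi)|^{2}\,d\nu_{r}(\xi)\geq0$. The boundary terms are annihilated by $\eta\in\mathcal M_{k}$, precisely because such terms depend on at most $n-k$ coordinates, and $\mathcal M_{k}$ kills marginals on that many coordinates.

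The hardest step is establishing this representation, specifically identifying exactly which degenerate terms are permitted. I would take it from the simplifications in \cite{Guella2025a}, as the sentence preceding the statement indicates.
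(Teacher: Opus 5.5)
Your implications $(iii)\Rightarrow(ii)$ and $(ii)\Rightarrow(i)$ are exactly the argument in the paper's remark after the statement: $\Lambda_{k}^{n}[P]\in\mathcal M_{k}$, and the representation $\Lambda_{k}^{n}[P]=M(-1)^{n}\mu_{1}\times\cdots\times\mu_{n}$ with $P$ discrete. For $(i)\Rightarrow(iii)$ the paper gives no argument of its own and imports the characterization from \cite{Guella2025a}, so deferring that step is consistent with the paper. The problem is the sketch you attach to it: its concluding step would fail as written.

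\textbf{The degenerate terms.} By definition, $\eta\in\mathcal M_{k}$ means every marginal of $\eta$ on at most $k-1$ coordinates vanishes; this is what the proof of Theorem \ref{genStreitbergvectorspace} checks. It does not mean marginals on $n-k$ coordinates vanish. Also, in $(i)$ the set of zero-mass coordinates is any set of size at least $k$, not a fixed one. So the terms of $g$ invisible in both $(i)$ and $(iii)$ are exactly the functions of at most $k-1$ of the variables $t_{1},\dots,t_{n}$: the kernel then depends on $x$ only through $x_{F}$ with $|F|\le k-1$, and $\eta_{F}=0$. A term in $n-k$ variables is not annihilated once $n\ge 2k$. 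For instance, take $n=4$, $k=2$, $\mu_{1},\mu_{2}$ of zero mass and $\eta=\mu_{1}\times\mu_{2}\times\delta_{0}\times\delta_{0}\in\mathcal M_{2}$. Then the term $t_{1}t_{2}$ contributes $4\|\int x\,d\mu_{1}\|^{2}\|\int x\,d\mu_{2}\|^{2}$, which is generally nonzero.

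\textbf{The shape of the representation.} For $n=k=2$, $g(t)=t_{1}t_{2}$ satisfies $(i)$: the double integral factors as $\prod_{i=1}^{2}\bigl(-2\|\int x\,d\mu_{i}\|^{2}\bigr)\ge 0$. Yet $g$ is not a convergent mixture of $\prod_{i}e^{-r_{i}t_{i}}$ plus functions of one variable. Its mixed second differences are constant, whereas those of such a mixture tend to $0$ at infinity and those of one-variable functions vanish.

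The representation in \cite{Guella2025a} is of Bernstein type. It is built from regularized factors such as $1-e^{-r_{i}t_{i}}$, whose limits as $r_{i}\to 0$ produce factors $t_{i}$. These polynomial pieces are not covered by your Fourier argument and need their own positivity check on $\mathcal M_{k}$. For $t_{1}t_{2}$, only the cross term $4\langle x_{1},y_{1}\rangle\langle x_{2},y_{2}\rangle$ survives, giving $4\|\int x_{1}\otimes x_{2}\,d\eta(x)\|^{2}\ge 0$.

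\textbf{Products versus $\mathcal M_{k}$.} Testing $(i)$ with Dirac differences gives positivity only on product measures, not on all of $\mathcal M_{k}$. Passing from products to arbitrary $\eta\in\mathcal M_{k}$ is precisely the content of the cited characterization. It should be imported as such, not presented as a consequence of those tests.
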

	
	By Theorems \ref{genStreitbergcartesianproduct} and \ref{genStreitbergvectorspace}, we may replace $\Lambda_{k}^{n}$ in condition $(ii)$ by any symmetric partition lattice $T$ of order $k$. Indeed, condition $(iii)$ implies the resulting inequality because $T[P]\in\mathcal M_{k}$, and that inequality implies condition $(i)$ by the representation in Theorem \ref{genStreitbergcartesianproduct}. Thus lattices of the same order define the same nonnegativity conditions in this setting.  We expect analogous equivalences for isotropic kernels on spheres and compact homogeneous spaces.
	
	\section{Generalized Streitberg interaction}\label{GeneralizedStreitberginteraction}
	
	In this section we define a new symmetric partition lattice based on the Streitberg lattice and obtain its M\"obius function. For $2 \leq k \leq n$ we define the lattice
	\[
	\Sigma_{k}^{n}:=  \{ \pi \in \Pi_{n}, \quad |\pi| \geq n-k+2 \text{ or } \pi=\hat{\mathbf{1}}_{n}\},
	\]
	that is,  the set consisting of the largest partition and all partitions having at least $n-k+2$ blocks. This set satisfies Theorem \ref{charsymmetricpartitionposet}. Note that when $k=n$ we have that $\Sigma_{n}^{n} = \Pi_{n}$.
	
	To obtain its M\"obius function, first, we recall the recurrence relations for the Stirling numbers of the first kind (the signed version)
	
	\begin{equation}\label{Stirling1}
		s(n+1,k+1)= -ns(n, k+1) + s(n, k), \quad n, k \geq 0
	\end{equation}
	with the initial conditions
	\[
	s(0,0)=1 \text{ and } s(n+1, 0 )=s(0,n+1 )=0 \text{ for all } n \geq 0.
	\]
	
	We also recall the recurrence relation for the  Stirling numbers of the  second kind
	\begin{equation}\label{Stirling2}
		S(n+1,k+1)= (k+1)S(n, k+1) + S(n, k), \quad n, k \geq 0
	\end{equation}
	with the same initial conditions
	\[
	S(0,0)=1 \text{ and } S(n+1, 0 )=S(0,n+1 )=0 \text{ for all } n \geq 0.
	\]
	
	In particular,
	\begin{equation}\label{lower}
		s(n,k)=S(n,k)=0, \quad  \text{ if }k>n.
	\end{equation}
	
	These two objects are interconnected, as the matrices of Stirling numbers of the first and second kind are inverses of each other, in the sense that
	
	\begin{equation}\label{stirling1and2inverse}
		\sum_{j=0}^{n}s\left(j, \ell \right)S\left(L,j\right)=\sum_{j=0}^{n}s\left(L,j\right)S\left(j,\ell \right)=\delta_{L,\ell}, \quad n\geq \ell, L.
	\end{equation}
	
	We also   recall the following expansion of the signed Stirling number of the first kind using the falling factorial
	\[
	(x)_{n}:=\prod_{k=0}^{n-1}(x-k)= \sum_{m=0}^{n}s(n,m)x^{m},
	\]
	Setting $x=1$ for $n\geq2$ gives the following relation, which we use in the text
	\begin{equation}\label{fallingfactorial}
		\sum_{m=0}^{n}s(n,m)=\sum_{m=1}^{n}s(n,m)=0, \quad n>1.
	\end{equation}
	
	In order to prove our main results for $\Sigma_{k}^{n}$, we need a few additional  equalities regarding the connection between Stirling numbers.
	
	\begin{lemma}\label{prestreit} For $\ell, j \geq 0$ we define $a(j,\ell):= \sum_{m=1}^{\ell+1}s(j,m)$.
		\begin{enumerate}
			\item[$i)$] It satisfies the same recurrence as in Equation \ref{Stirling1}
			\[
			a(j+1,\ell+1 )= a(j,\ell) -	ja(j,\ell+1) , \quad  j\geq1,\ \ell\geq0
			\]
			with initial conditions  $a(1,\ell)=1$, $a(j,0)=s(j,1)=(-1)^{j-1}(j-1)!$ for $j,\ell \geq 1$.
			\item [$ii)$] For $1\leq L\leq n$ and $\ell\geq0$,
			\[
			\sum_{j=1}^{n}a(j,\ell)S(L,j) = \begin{cases}
				1 & \text{if } 1 \leq L \leq \ell +1 \\
				0 & \text{if } L \geq \ell +2
			\end{cases}
			\]
			\item[$iii)$] $a(j,\ell)=0$ for $1 < j \leq \ell+1$  and $a(\ell +2,\ell)=-1$.
		\end{enumerate}
	\end{lemma}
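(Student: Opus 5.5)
For part $i)$, I will start from the definition $a(j,\ell)=\sum_{m=1}^{\ell+1}s(j,m)$ and sum Equation \ref{Stirling1} over $m$. Writing $s(j+1,m)=-j\,s(j,m)+s(j,m-1)$ and summing over $1\leq m\leq \ell+2$ gives
\[
a(j+1,\ell+1)=-j\,a(j,\ell+1)+\sum_{m=0}^{\ell+1}s(j,m).
\]
Since $j\geq 1$, we have $s(j,0)=0$, so the last sum equals $a(j,\ell)$. The initial conditions follow in two steps. First, $a(1,\ell)=s(1,1)=1$, because $s(1,m)=0$ for $m>1$ by Equation \ref{lower}. Second, $a(j,0)=s(j,1)$, and the value $(-1)^{j-1}(j-1)!$ comes by induction from Equation \ref{Stirling1} with $k=0$, namely $s(j+1,1)=-j\,s(j,1)$.

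For part $ii)$, I will exchange the order of summation:
\[
\sum_{j=1}^{n}a(j,\ell)S(L,j)=\sum_{m=1}^{\ell+1}\sum_{j=1}^{n}s(j,m)S(L,j).
\]
The $j=0$ term can be added freely, since $S(L,0)=0$ for $L\geq1$. Equation \ref{stirling1and2inverse} with $n\geq L$ then makes the inner sum equal to $\delta_{L,m}$. The whole expression is therefore $\sum_{m=1}^{\ell+1}\delta_{L,m}$, which is $1$ if $L\leq \ell+1$ and $0$ otherwise. The one point to check is the range: the inverse relation needs the upper limit to be at least $\max(L,m)$. When $m>n$ the terms $s(j,m)$ vanish for $j\leq n$, and then $\delta_{L,m}=0$ anyway because $L\leq n$, so the formula holds in that case too.

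For part $iii)$, I will argue directly. If $1<j\leq \ell+1$, then $s(j,m)=0$ for $m>j$ by Equation \ref{lower}, so
\[
a(j,\ell)=\sum_{m=1}^{j}s(j,m)=0
\]
by Equation \ref{fallingfactorial}. For $j=\ell+2\geq 2$, the same identity gives
\[
a(\ell+2,\ell)=\sum_{m=1}^{\ell+2}s(\ell+2,m)-s(\ell+2,\ell+2)=0-1=-1,
\]
using $s(n,n)=1$, which follows from the recurrence. None of the three steps is a real obstacle; the only care needed is with the boundary indices ($m=0$, $j=0$, and $m>n$) in parts $i)$ and $ii)$.
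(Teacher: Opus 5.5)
Your proposal is correct and follows the paper's proof essentially step for step. Part $i)$ sums the recurrence in Equation \ref{Stirling1} over $m$, part $ii)$ exchanges the sums and applies Equation \ref{stirling1and2inverse}, and part $iii)$ uses Equations \ref{lower} and \ref{fallingfactorial}; your explicit check of the boundary case $m>n$ in part $ii)$ is a small detail the paper leaves implicit.
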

	
	\begin{proof}
		To prove relation $i)$, note that by Equation \ref{Stirling1}
		\[
		\begin{aligned}
			ja(j,\ell+1) + a(j+1,\ell+1 )&= \sum_{m=1}^{\ell +2}[js(j,m) + s(j+1,m)]\\
			&= \sum_{m=1}^{\ell +2}[js(j,m) + [-js(j,m) + s(j,m-1)]]\\
			&=	\sum_{m=1}^{\ell +2}  s(j,m-1)= 0 + \sum_{m=1}^{\ell +1}  s(j,m)= a(j,\ell).
		\end{aligned}
		\]
		The boundary cases are obtained by direct verification. \\
		For relation $ii)$, by Equation \ref{stirling1and2inverse}
		\[
		\sum_{j=1}^{n}a(j,\ell)S(L,j)=\sum_{j=1}^{n}\left [\sum_{m=1}^{\ell +1}s(j,m)\right ]S(L,j)=\sum_{m=1}^{\ell +1}\left [	\sum_{j=1}^{n}s(j,m) S(L,j)\right ]=\sum_{m=1}^{\ell +1}\delta_{m,L},
		\]
		and the conclusion follows directly from this.\\
		To prove relation $iii)$,  by Equation \ref{fallingfactorial} and Equation \ref{lower} we get
		\[
		a(j, \ell)=\sum_{m=1}^{\ell+1}s(j,m)= \sum_{m=1}^{j}s(j,m)+ \sum_{m=j+1}^{\ell+1}s(j,m)=0+0=0, \quad 1< j \leq \ell +1
		\]
		\[
		a(\ell+2,\ell)=\sum_{m=1}^{\ell+1}s(\ell+2,m)= \sum_{m=1}^{\ell+2}s(\ell+2,m) -  s(\ell+2,\ell+2) =0-1=-1.
		\]
	\end{proof}
	
	For a probability measure $P$ on $\mathds{X}_{n}$, define for $1\leq k \leq n$  the nonnegative measure $
	P^{k,n}:= \sum_{|\pi|=k}P_{\pi}$, that is, $P^{k,n}$ is the sum over all partitions of $\{1, \ldots, n\}$  having $k$ blocks. For convenience, define $P^{0,n}$ and $P^{n+h,n}$ as the zero measure for every $n,h\geq 1$. Note that $P^{k,n}(\mathds{X}_{n})= S(n,k)$.
	
	\begin{theorem}  \label{genstreitchar} The M\"obius function at $ \pi  \in 	\Sigma_{k}^{n} $ is $ \mu(\pi, \hat{\mathbf{1}}_{n})=a(|\pi|, n-k)$.
	\end{theorem}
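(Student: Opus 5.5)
We verify that the proposed values satisfy the defining recursion $\mu(\pi,\hat{\mathbf 1}_{n})=-\sum_{x>\pi,\,x\in\Sigma_{k}^{n}}\mu(x,\hat{\mathbf 1}_{n})$. Since the Möbius coefficients at the top are determined uniquely by this recursion, descending through the level sets, this suffices. We set $\ell:=n-k$. The claim is then $\mu(\pi,\hat{\mathbf 1}_{n})=a(|\pi|,\ell)$ for every $\pi\in\Sigma_{k}^{n}$.

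For $\pi=\hat{\mathbf 1}_{n}$ the value is $a(1,\ell)=1$ by Lemma \ref{prestreit} $i)$, which is correct. For proper $\pi$ we have $|\pi|\geq \ell+2$. The elements of $\Sigma_{k}^{n}$ strictly above $\pi$ are $\hat{\mathbf 1}_{n}$ together with the coarsenings $x>\pi$ satisfying $\ell+2\leq |x|<|\pi|$. Coarsenings of $\pi$ with exactly $j$ blocks correspond to partitions of the $|\pi|$ blocks of $\pi$ into $j$ groups, so there are $S(|\pi|,j)$ of them. Writing $L:=|\pi|$, the recursion to verify becomes
\[
a(L,\ell)=-1-\sum_{j=\ell+2}^{L-1}a(j,\ell)S(L,j).
\]

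To establish this identity we invoke Lemma \ref{prestreit}. By part $ii)$ with $n$ replaced by $L$, and using $L\geq\ell+2$, we have $\sum_{j=1}^{L}a(j,\ell)S(L,j)=0$. We split this sum into three ranges. The term $j=1$ gives $a(1,\ell)S(L,1)=1$. The terms $2\leq j\leq \ell+1$ vanish by part $iii)$. The term $j=L$ gives $a(L,\ell)S(L,L)=a(L,\ell)$. The remaining range $\ell+2\leq j\leq L-1$ is exactly the sum appearing in the recursion. Hence $0=1+\sum_{j=\ell+2}^{L-1}a(j,\ell)S(L,j)+a(L,\ell)$, which is precisely the required identity. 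As a consistency check, at the maximal proper level $L=\ell+2$ the inner sum is empty and the identity gives $a(\ell+2,\ell)=-1$, agreeing with part $iii)$.

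The main point requiring care is the counting step. We must confirm that the coarsenings of $\pi$ with $j$ blocks are all the elements of $\Sigma_{k}^{n}$ strictly between $\pi$ and the top at that level, and that no coarsening with fewer than $\ell+2$ blocks other than $\hat{\mathbf 1}_{n}$ lies in the lattice. Both follow directly from the definition of $\Sigma_{k}^{n}$ by block count. Alternatively, the argument can be phrased through Theorem \ref{charsymmetricpartitionposet} by summing $m(\alpha,\beta)$ over the shapes $\beta$ having $j$ parts, which also totals $S(L,j)$.
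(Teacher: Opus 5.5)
Your proof is correct and follows essentially the same route as the paper. Both verify the upward M\"obius recursion (Equation \ref{somax1}) by counting the $S(|\pi|,j)$ coarsenings with $j$ blocks, then collapse the sum using Lemma \ref{prestreit}$(ii)$ and $(iii)$; the only difference is that you absorb the case $|\pi|=n-k+2$ into the general identity (the inner sum is empty there), whereas the paper treats it separately.
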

	
	\begin{proof}
		If $|\pi|=1$, then $a(1,n-k)=1=\mu(\pi,\hat{\mathbf1}_{n})$. If $|\pi|=n-k+2$, then $\pi$ is maximal below the top, so both coefficients are $-1$ by Lemma \ref{prestreit}. For the remaining cases, we verify the proposed coefficients in the M\"obius recursion.
		Let $|\pi|=\ell>n-k+2$. Substituting the proposed value $a(|\eta|,n-k)$ into the sum over $\eta\geq\pi$ gives
		\[
		1+\sum_{\substack{\hat{\mathbf1}_{n}>  \eta\geq\pi\\\eta\in\Sigma_{k}^{n}}}a(|\eta|,n-k)=1+\sum_{j=n-k+2}^{\ell}a(j,n-k)S(\ell,j)=\sum_{j=1}^{\ell}a(j,n-k)S(\ell,j)=0.
		\]
		The first equality counts the coarsenings with $j$ blocks. The second uses $a(1,n-k)=1$ and $a(j,n-k)=0$ for $2\leq j\leq n-k+1$; the last follows from Lemma \ref{prestreit}$(ii)$. Thus the proposed coefficients satisfy Equation \ref{somax1}, which uniquely determines the M\"obius coefficients at the top.
	\end{proof}
	
	It is also possible to prove Theorem \ref{genstreitchar}  from Theorem \ref{charsymmetricpartitionposet} and the equality $ \sum_{\ell(\beta)=j}m(\alpha, \beta)=S(\ell(\alpha),j)$.
	Hence, as defined in Equation \ref{interaction}, for a probability  $P \in \mathcal{M}(\mathds{X}_{n})$, we define its interaction with respect to $\Sigma_{k}^{n}$ as
	\begin{equation}\label{streitkdefn}
		\Sigma_{k}^{n}[P]:=\sum_{\pi \in \Sigma_{k}^{n}}\mu(\pi, \hat{\mathbf{1}}_{n})P_{\pi}=    \sum_{j=1}^{n}a(j, n-k)P^{j,n},
	\end{equation}
	where the last equality is obtained using property    $iii)$ in Lemma \ref{prestreit}.
	
	\begin{corollary}\label{consequencesstreik} The generalized Streitberg interaction $\Sigma_{k}^{n}$ satisfies the following properties:
		\begin{enumerate}
			\item[$i)$] The symmetric partition lattice $\Sigma_{k}^{n}$ has order $k$.
			\item[$ii)$]  If $F \subset \{1, \ldots, n\}$ with $|F| \geq k$ then $(\Sigma_{k}^{n}[P])_{F} = \Sigma_{k}^{|F|}[P_{F}] $.
			\item[$iii)$] $\Sigma_{n}^{n}[P]$ is  the standard definition of the Streitberg interaction.
		\end{enumerate}
	\end{corollary}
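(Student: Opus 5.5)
Part $i)$ follows directly from the definition of the order. The set $S\subseteq\operatorname{Par}(n)$ attached to $\Sigma_{k}^{n}$ by Theorem \ref{charsymmetricpartitionposet} consists of $(n)$ together with all $\alpha$ having $\ell(\alpha)\geq n-k+2$. For such an $\alpha$, the parts other than $\alpha_{1}$ number at least $n-k+1$ and are each at least $1$, so $\alpha_{1}\leq n-(n-k+1)=k-1$. Equality is attained by $(k-1,1^{n-k+1})$, which has $n-k+2$ parts. Hence $\max_{\alpha\in S\setminus\{(n)\}}\alpha_{1}=k-1$, and so $|\Sigma_{k}^{n}|_{\infty}=k$.

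Part $iii)$ is the case $k=n$. There $\Sigma_{n}^{n}=\Pi_{n}$, and Theorem \ref{genstreitchar} gives $\mu(\pi,\hat{\mathbf1}_{n})=a(|\pi|,0)=s(|\pi|,1)=(-1)^{|\pi|-1}(|\pi|-1)!$ by Lemma \ref{prestreit}$(i)$. These are exactly the coefficients $a_{\pi}$ of Theorem \ref{streit}, so $\Sigma_{n}^{n}[P]$ is the Streitberg interaction.

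Part $ii)$ is the main step. I will use the expression \eqref{streitkdefn}, $\Sigma_{k}^{n}[P]=\sum_{j}a(j,n-k)P^{j,n}$, and first marginalize out a single coordinate, say $n$, assuming $n-1\geq k$; iterating this gives every $F$ with $|F|\geq k$, using permutation invariance from Theorem \ref{aaa}. For a partition $\pi$ with $j$ blocks, $(P_{\pi})_{\{1,\ldots,n-1\}}=P_{\pi'}$, where $\pi'$ is $\pi$ restricted to $\{1,\ldots,n-1\}$. The restriction $\pi'$ has $j$ blocks if $n$ lies in a non-singleton block, and $j-1$ blocks if $\{n\}$ is a singleton. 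Conversely, each partition $\rho$ of $\{1,\ldots,n-1\}$ with $m$ blocks arises from $m$ partitions with $m$ blocks (adding $n$ to one of the $m$ blocks) and from one partition with $m+1$ blocks. This gives the marginal recursion
\[
(P^{j,n})_{\{1,\ldots,n-1\}}=j\,P_{\{1,\ldots,n-1\}}^{j,n-1}+P_{\{1,\ldots,n-1\}}^{j-1,n-1},
\]
which mirrors Equation \ref{Stirling2}. Consequently the coefficient of $P^{m,n-1}$ in the marginal equals $m\,a(m,n-k)+a(m+1,n-k)$. By Lemma \ref{prestreit}$(i)$ with $j=m$ and $\ell+1=n-k$, this equals $a(m,n-k-1)=a(m,(n-1)-k)$, which is precisely the coefficient defining $\Sigma_{k}^{n-1}[P_{\{1,\ldots,n-1\}}]$.

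The obstacle is the index range in this last step. Lemma \ref{prestreit}$(i)$ needs $\ell=n-k-1\geq0$, which is exactly the hypothesis $n-1\geq k$, and it needs $m\geq1$. The $m=0$ term is absent because $P^{0,n-1}=0$, and the top value $m=n$ contributes nothing because $P^{n,n-1}=0$. I will also check the boundary case $m=1$ against the initial conditions $a(1,\cdot)=1$. Once these are verified, the induction on $n-|F|$ completes $ii)$.
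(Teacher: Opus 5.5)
Your proposal is correct and follows the paper's proof essentially step for step: the same bound $\alpha_{1}\leq k-1$, attained at $(k-1,1^{n-k+1})$, for $i)$; the same marginal recursion $(P^{j,n})_{F}=j(P_{F})^{j,n-1}+(P_{F})^{j-1,n-1}$ combined with Lemma \ref{prestreit}$(i)$ and then iterated for $ii)$; and $a(j,0)=s(j,1)=(-1)^{j-1}(j-1)!$ for $iii)$. The only cosmetic difference is how you reach an arbitrary $F$ with $|F|=n-1$: you use the forward direction of Theorem \ref{aaa}, which does not need $|X_{i}|\geq n$, whereas the paper simply notes that the counting argument is unchanged under relabeling.
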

	
	\begin{proof}
		If $\alpha=(\alpha_{1}, \ldots, \alpha_{\ell}) \in \operatorname{Par}(n)$ satisfies $\alpha=\shp(\pi)$ for some $\pi \in \Sigma_{k}^{n}\setminus\{\hat{\mathbf{1}}_{n}\}$, then we must have by the definition of $\Sigma_{k}^{n}$ that $\ell \geq n-k+2$, and, since all remaining parts are at least $1$, the largest possible value of $\alpha_{1}$ must be $k-1$, which occurs when $\alpha=(k-1, 1^{n-k+1})$, thus concluding relation $i)$ as $|\Sigma_{k}^{n}|_{\infty}= k-1 +1=k$.\\
		For property $ii)$, the case $k=n$ is immediate. Suppose $k\leq n-1$. Given a partition $\pi$ of $\{1, \ldots, n\}$ of size $j$, the partition $\pi^{\prime}$ of $\{1, \ldots, n-1\}$ obtained by removing the element $n$ from $\pi$ either has size $j$ (in this case $n$ was inserted into one of the $j$ blocks of $\pi^{\prime}$) or has size $j-1$ (in this case the element $n$ is a singleton of $\pi$). From this analysis, we obtain that
		\[
		(P^{j,n})_{\{1, \ldots, n-1\}}= j(P_{\{1, \ldots, n-1\}})^{j,n-1 } + (P_{\{1, \ldots, n-1\}})^{j-1,n-1 }, \quad  n,j \geq 1.
		\]
		By relabeling the coordinates, we obtain that the previous equation also holds for any subset $F\subset \{1, \ldots, n\}$ with $|F|=n-1$. Hence, for such $F$, relation $i)$ in Lemma \ref{prestreit} gives
		\[
		\begin{aligned}
			(\Sigma_{k}^{n}[P])_{F}&= \sum_{j=1}^{n}a(j, n-k)(P^{j,n})_{F}=  \sum_{j=1}^{n}a(j, n-k)[j(P_{F})^{j,n-1 } + (P_{F})^{j-1,n-1}]\\
			&= \sum_{l=1}^{n-1}(a(l+1,n-k) + la(l,n-k)  ) (P_{F})^{l,n-1 } = \sum_{l=1}^{n-1}a(l,n-k-1)(P_{F})^{l,n-1 }= \Sigma_{k}^{n-1}[P_{F}].
		\end{aligned}
		\]
		Applying this result recursively, we obtain the general case.\\
		For relation $iii)$, since $a(j,n-n)=a(j,0)= s(j, 1)=(-1)^{j-1}(j-1)! $, we obtain that $\Sigma_{n}^{n}[P]=  \sum_{j=1}^{n}(-1)^{j-1}(j-1)!P^{j,n}$.\end{proof}

	\section{Size-limited symmetric lattices}\label{sizelimited}
	
	We define for $2 \leq k\leq n$
	\[
	J_{k }^{n}:=\{\pi \in \Pi_{n}, \quad \shp(\pi)_{1} \leq k-1 \} \cup \{\hat{\mathbf{1}}_{n}\}.
	\]
	This  is a symmetric partition lattice, because if $\pi, \sigma \in  \Pi_{n}$ with $\pi \leq \sigma$, then $ \shp(\pi)_{1}\leq   \shp(\sigma)_{1}$. We focus our analysis on the case $k=3$, for which the maximal elements have the following shapes
	\[
	(2^{N},1)\quad\text{when }n=2N+1,\qquad (2^{N})\quad\text{when }n=2N.
	\]
	For $\lceil n/2 \rceil \leq S \leq n-1$, we further define
	\[
	J_{3, S }^{n}:=\{\pi \in \Pi_{n}, \quad \shp(\pi)_{1} \leq 2 \text{ and } |\pi|\geq S\} \cup \{\hat{\mathbf{1}}_{n}\},
	\]
	which can be rewritten as
	\[
	J_{3, S }^{n}:=\{\pi \in \Pi_{n}, \quad \shp(\pi) =(2^{n-S -j}, 1^{2S -n+2j} ) \text { for } 0 \leq j \leq  n-S \} \cup \{\hat{\mathbf{1}}_{n}\},
	\]
	whose maximal elements are 	$\{\pi\in\Pi_{n}:\shp(\pi)=(2^{n-S},1^{2S-n})\}$.  For instance, $J_{3,3}^{5}$ contains partitions of the following non-top shapes $(2,2,1)$, $(2,1,1,1)$ and $(1,1,1,1,1)$ while  $J_{3,4}^{5}$ contains partitions of the following non-top shapes  $(2,1,1,1)$ and $(1,1,1,1,1)$.
	
	Below, we compute some  terms $m(\alpha, \beta)$ that will be needed when using Theorem \ref{charsymmetricpartitionposet}.
	
	\begin{lemma}\label{computem} For $ N \geq 1$  and all $0\leq l \leq j\leq N$,   it holds that
		
		\[
		m(  (2^{N-j}, 1^{2j+1} ), (2^{N-l}, 1^{2l+1} ))= \binom{2j+1}{2l+1}\frac{(2j -2l)!}{2^{j-l}(j-l)!}
		\]
		
		\[
		m( (2^{N-j}, 1^{2j} ), (2^{N-l}, 1^{2l} ))=   \binom{2j}{2l}\frac{(2j -2l)!}{2^{j-l}(j-l)!}
		\]
		
	\end{lemma}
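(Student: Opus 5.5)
Fix a partition $\pi$ of shape $\alpha=(2^{N-j},1^{2j+1})$ (odd case, $n=2N+1$) and count the partitions $x\geq\pi$ of shape $\beta=(2^{N-l},1^{2l+1})$; by Definition \ref{order[n]} this count is $m(\alpha,\beta)$. The plan is a direct combinatorial count based on block sizes. Every block of such an $x$ has size at most $2$, so each block of $x$ is either a single block of $\pi$ or the union of exactly two singleton blocks of $\pi$. A doubleton of $\pi$ cannot be merged with anything, since that would create a block of size at least $3$. Hence the $N-j$ doubletons of $\pi$ stay unchanged as blocks of $x$.

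The coarsening $x$ is therefore determined entirely by what happens to the $2j+1$ singletons of $\pi$. Some of them remain singletons of $x$, and the others are paired into new doubletons. The shape forces exactly $2l+1$ singletons to remain and $2j-2l$ to be paired, giving $j-l$ new doubletons.

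The count has two factors.
\begin{itemize}
\item Choosing which $2l+1$ singletons survive gives $\binom{2j+1}{2l+1}$ options.
\item The remaining $2(j-l)$ elements must be split into unordered pairs. The number of perfect matchings on $2r$ labelled points is $(2r)!/(2^{r}r!)$, because ordering the pairs and the entries within each pair overcounts by $r!\,2^{r}$.
\end{itemize}
Multiplying the two factors with $r=j-l$ gives the first formula.

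The even case $n=2N$, with shapes $(2^{N-j},1^{2j})$ and $(2^{N-l},1^{2l})$, is identical with $2j$ singletons in place of $2j+1$. It yields $\binom{2j}{2l}(2j-2l)!/(2^{j-l}(j-l)!)$. The boundary case $l=j$ gives $1$, which is consistent with $m(\alpha,\alpha)=1$.

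The only point needing care is checking that the correspondence is a bijection. A choice of surviving singletons together with a matching of the rest gives a coarsening of the right shape, and distinct choices give distinct partitions because the blocks of $x$ determine both the surviving singletons and the pairs. This injectivity and surjectivity check is routine, so I expect no real obstacle.
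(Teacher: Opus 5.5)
Your proof is correct and follows the paper's argument: fix a partition of shape $\alpha$, choose which $2l+1$ (respectively $2l$) singletons survive, and count the perfect matchings $(2j-2l)!/(2^{j-l}(j-l)!)$ of the remaining singletons. Your remark that the doubletons of $\pi$ cannot be merged, because every block of the coarsening has size at most $2$, makes explicit a step the paper leaves implicit.
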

	
	\begin{proof} In the first equality,  to go from the  element $(2^{N-j}, 1^{2j+1})$ to  $(2^{N-l}, 1^{2l+1} )$ we must choose which of the $2j+1$ entries equal to $1$ will be paired to form the remaining $j-l$ parts of size $2$  (or equivalently,  which $2l+1$ of the $2j+1$ entries will remain equal to $1$) which is exactly $\binom{2j+1}{2l+1}$. Now that we have the $2(j-l)$ elements to form pairs, we must count the ways in which they can be paired, which is precisely $\frac{(2j -2l)!}{2^{j-l}(j-l)!}$, which concludes the first equality. The proof of the second equality is similar and is omitted.
	\end{proof}
	
	A small and useful reformulation of the identities of Lemma \ref{computem} is that for $\lceil n/2 \rceil  \leq q \leq p \leq n$
	
	\begin{equation}\label{computemuseful}
		m( (2^{n-p}, 1^{2p-n} ), (2^{n-q}, 1^{2q-n} ))=   \binom{2p-n}{2q-n}\frac{(2p -2q)!}{2^{p-q}(p-q)!}.
	\end{equation}
	
	For $a\in\mathbb Z_{\geq0}$, define the sequence recursively by $b_{0}^{a}:=-1$, $b_{1}^{a}:=-1$, and  for $j\geq 1$
	\begin{equation}\label{oddtermsgeneral}
		b_{2j+1}^{a}:= -1 -\sum_{l=0}^{j-1}b_{2l+1}^{a}\binom{a+2j+1}{a + 2l+1}\frac{(2j -2l)!}{2^{j-l}(j-l)!},
	\end{equation}
	\begin{equation}\label{eventermsgeneral}
		b_{2j}^{a}:= -1 -\sum_{l=0}^{j-1}b_{2l}^{a}\binom{a+2j}{a + 2l}\frac{(2j -2l)!}{2^{j-l}(j-l)!}.
	\end{equation}
	
	\begin{theorem}\label{recursivej3lmain}
		The sequence $(	b_{j}^{a})_{j\geq0}$  defined in Equations \ref{oddtermsgeneral} and  \ref{eventermsgeneral} satisfies
		\begin{equation}\label{recursivej3lmaineq}
			-e^{-x^{2}/2}\left [ \frac{e^{x} - \sum_{c=0}^{a-1}x^{c}/ c!}{x^{a}}  \right ]= \sum_{j=0}^{\infty}\frac{b^{a}_{j}}{(a+j)!}x^{j}, \quad x \in \mathbb{R}.
		\end{equation}
		Here the quotient is extended continuously at $x=0$, with value $1/a!$, and the sum is empty when $a=0$.
	\end{theorem}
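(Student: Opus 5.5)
The plan is to show that the two recursions \eqref{oddtermsgeneral} and \eqref{eventermsgeneral} are exactly a coefficient identity for a product of power series, and then to recognise both factors in closed form.

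\textbf{Step 1: rewrite the recursions as a single convolution.} Fix $a$ and write $m=2j+r$ with $r\in\{0,1\}$. Move the sum to the left and include the term $l=j$, whose coefficient is $\binom{a+m}{a+m}\cdot 1=1$. Both recursions then read
\[
\sum_{l=0}^{j} b^{a}_{2l+r}\binom{a+m}{a+2l+r}\frac{(2j-2l)!}{2^{j-l}(j-l)!}=-1 .
\]
Divide by $(a+m)!$ and put $i=j-l$. Since $\binom{a+m}{a+m-2i}(2i)!/(a+m)!=1/(a+m-2i)!$, this becomes
\[
\sum_{i=0}^{\lfloor m/2\rfloor}\frac{b^{a}_{m-2i}}{(a+m-2i)!}\cdot\frac{1}{2^{i}\,i!}=-\frac{1}{(a+m)!}.
\]
This holds for every $m\geq 2$ by the definition of $b^a$. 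For $m=0,1$ the sum has only the term $i=0$, and the identity reduces to $b^{a}_{0}=b^{a}_{1}=-1$, which are the initial values. Because $m-2i$ has the parity of $m$, the odd and even recursions merge into one statement valid for all $m\geq 0$.

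\textbf{Step 2: interpret as a product of generating functions.} Set $B(x)=\sum_{j\ge0} b^{a}_{j}x^{j}/(a+j)!$ as a formal power series. The number $1/(2^{i} i!)$ is the coefficient of $x^{2i}$ in $e^{x^{2}/2}$. Hence the identity of Step 1 says that the coefficient of $x^{m}$ in $e^{x^{2}/2}B(x)$ equals $-1/(a+m)!$. Therefore
\[
e^{x^{2}/2}B(x)=-\sum_{m\ge0}\frac{x^{m}}{(a+m)!}=-\frac{e^{x}-\sum_{c=0}^{a-1}x^{c}/c!}{x^{a}} ,
\]
where the right-hand side is the entire function obtained by removing the first $a$ terms of the exponential series and dividing by $x^{a}$. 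Its value at $0$ is $-1/a!$, consistent with the continuous extension in the statement.

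\textbf{Step 3: pass from formal series to real functions.} Multiply by the entire series $e^{-x^{2}/2}$. This gives the formal identity \eqref{recursivej3lmaineq}, whose right side is the product of two entire functions, hence itself an entire power series. Formal power series coefficients are uniquely determined, so $B$ coincides with the Taylor series of that entire function. In particular $B$ converges for all $x\in\mathbb R$ and the equality holds pointwise.

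\textbf{Main obstacle.} The only delicate point is Step 1's bookkeeping. One must check that the binomial factor collapses exactly to $1/(a+m-2i)!$, and that the base cases $m=0,1$, which are not covered by the recursion, still satisfy the convolution identity. Once that is confirmed, the rest is immediate.
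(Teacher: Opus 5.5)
Your proposal is correct and follows essentially the same route as the paper's proof. Both rewrite the recursions (together with $b^{a}_{0}=b^{a}_{1}=-1$) as the statement that the coefficient of $x^{m}$ in $B^{a}(x)e^{x^{2}/2}$ is $-1/(a+m)!$, and then multiply by $e^{-x^{2}/2}$ and use entireness to get pointwise convergence. Your single parity-indexed convolution is only a cosmetic merging of the paper's separate odd and even identities.
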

	
	\begin{proof}	First, note that from Equation  \ref{oddtermsgeneral}, we have that for any $j \geq 1$
		\[		1=- \sum_{l=0}^{j}b^{a}_{2l +1}\binom{a+2j+1}{a+2l+1}\frac{(2j -2l)!}{2^{j-l}(j-l)!}.
		\]
		Dividing both sides by $(a+2j+1)!$ and simplifying the coefficients, we obtain
		\begin{equation}\label{oddtermsrecurrence2general}
			\frac{1}{(a+2j+1)!}= -\sum_{l=0}^{j}\frac{b^{a}_{2l+1}}{(a+2l+1)!}\frac{1}{2^{j-l}(j-l)!}.
		\end{equation}
		Similarly, using Equation \ref{eventermsgeneral} we also obtain
		\begin{equation}\label{eventermsrecurrence2general}
			\frac{1}{(a+2j)!}= -\sum_{l=0}^{j}\frac{b^{a}_{2l}}{(a+2l)!}\frac{1}{2^{j-l}(j-l)!}.
		\end{equation}
		Define the following formal power series and recall the other two expansions
		\[
		B^{a}(x):=\sum_{j=0}^{\infty}\frac{b^{a}_{j}}{(a+j)!}x^{j}, \quad e^{x^{2}/2}=\sum_{j=0}^{\infty} \frac{1}{2^{j}j!}x^{2j}, \quad E_{a}(x):= \frac{e^{x} - \sum_{c=0}^{a-1}x^{c}/ c!}{x^{a}}= \sum_{j=0}^{\infty} \frac{1}{(a+j)!}x^{j}.
		\]
		By Equations \ref{oddtermsrecurrence2general} and \ref{eventermsrecurrence2general}, which also hold for $j=0$ by the initial conditions, the coefficient of $x^{m}$ in the Cauchy product $B^{a}(x)e^{x^{2}/2}$ equals $-1/(a+m)!$. Therefore
		\[
		B^{a}(x)e^{x^{2}/2}=-E_{a}(x),\qquad B^{a}(x)=-E_{a}(x)e^{-x^{2}/2}.
		\]
		The function $E_{a}$ has a removable singularity at zero and is entire. Hence the last expression is entire, so its Taylor series converges for every real $x$ and proves Equation \ref{recursivej3lmaineq}.
	\end{proof}
	
	\begin{theorem} For $0\leq j \leq n-S$, the M\"obius function at $ \pi  \in 	J_{3,S}^{n} $ such that $ \shp(\pi) =(2^{n-S -j}, 1^{2S -n+2j} )  $ satisfies $\mu(\pi, \hat{\mathbf{1}}_{n})=   b^{2S-n}_{2j}$.
	\end{theorem}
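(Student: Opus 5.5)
We argue by induction on $j$, using the recursion for $H$ in Theorem \ref{charsymmetricpartitionposet} applied to the lower set $S_{3,S}\subset\operatorname{Par}(n)$ associated with $J_{3,S}^{n}$. The non-top shapes of $S_{3,S}$ are $\alpha_{j}:=(2^{n-S-j},1^{2S-n+2j})$ for $0\leq j\leq n-S$. Set $a:=2S-n\geq 0$ and $p_{j}:=S+j$, so that $\alpha_{j}=(2^{n-p_{j}},1^{2p_{j}-n})$ and the number of singleton parts of $\alpha_{j}$ is $a+2j$. The claim is then that $H(\alpha_{j})=b^{a}_{2j}$. These shapes form a chain $\alpha_{n-S}\prec\cdots\prec\alpha_{0}\prec(n)$ inside $S_{3,S}$: one passes from $\alpha_{j}$ to $\alpha_{l}$ for $l<j$ by pairing singletons. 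Hence the elements of $S_{3,S}$ strictly above $\alpha_{j}$ are exactly $(n)$ and $\alpha_{l}$ for $0\leq l<j$.

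For the base case $j=0$, the shape $\alpha_{0}$ is the shape of the maximal elements, so it lies in the level set $I_{2}$. Theorem \ref{charsymmetricpartitionposet} then gives $H(\alpha_{0})=-1=b^{a}_{0}$. For the inductive step, fix $j\geq1$. The recursion of Theorem \ref{charsymmetricpartitionposet} gives
\[
H(\alpha_{j})=-H((n))\,m(\alpha_{j},(n))-\sum_{l=0}^{j-1}H(\alpha_{l})\,m(\alpha_{j},\alpha_{l}).
\]
Here $H((n))=1$ and $m(\alpha_{j},(n))=1$. By Equation \ref{computemuseful} with $p=S+j$ and $q=S+l$,
\[
m(\alpha_{j},\alpha_{l})=\binom{a+2j}{a+2l}\frac{(2j-2l)!}{2^{j-l}(j-l)!}.
\]
Substituting the inductive hypothesis $H(\alpha_{l})=b^{a}_{2l}$ turns the right-hand side into exactly Equation \ref{eventermsgeneral}, so $H(\alpha_{j})=b^{a}_{2j}$. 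The conclusion follows from $\mu(\pi,\hat{\mathbf 1}_{n})=H(\shp(\pi))$.

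The main point needing care is the applicability of Equation \ref{computemuseful}, which requires $\lceil n/2\rceil\leq q\leq p\leq n$. This holds because $S\geq\lceil n/2\rceil$ and $S+j\leq n$. One also has to check that Equation \ref{computemuseful} indeed covers both parities of $n$ from Lemma \ref{computem}: the parity of $2p-n$ matches the parity of $n$, and these are the two displayed cases there. A second point is confirming that no other shape of $S_{3,S}$ lies above $\alpha_{j}$. This holds because every non-top element of $J_{3,S}^{n}$ has largest part at most $2$ and at least $S$ blocks, which forces its shape to be one of the $\alpha_{l}$. Beyond these checks the argument is bookkeeping.
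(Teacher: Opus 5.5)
Your proposal is correct and follows the paper's proof essentially step for step. Both argue by induction on $j$ through the recursion for $H$ in Theorem \ref{charsymmetricpartitionposet}, with base case $H((2^{n-S},1^{2S-n}))=-1=b^{2S-n}_{0}$ and the multiplicities $m$ taken from Equation \ref{computemuseful}, so that the recursion becomes exactly Equation \ref{eventermsgeneral}. Your extra checks, that only $(n)$ and the $\alpha_{l}$ with $l<j$ lie above $\alpha_{j}$ and that the range and parity conditions of Equation \ref{computemuseful} hold, are details the paper leaves implicit.
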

	\begin{proof} By definition, $H((n))=1$. Since $(2^{n-S}, 1^{2S-n})$ is a maximal non-top shape, 	\\$H((2^{n-S}, 1^{2S-n}))=-1$. Since $b^{a}_{0}= -1$ for any $a \in \mathbb Z_{\geq0}$ by definition, we obtain the equality for $j=0$.\\
		By induction on $j$ and Equation \ref{computemuseful}, we conclude that
		\[
		\begin{aligned}
			&H((2^{n-S-j}, 1^{2S-n+2j}))\\
			& = - H((n))  -\sum_{l=0}^{j-1}H((2^{n-S-l}, 1^{2S-n+2l}))m((2^{n-S-j}, 1^{2S-n+2j}), (2^{n-S-l}, 1^{2S-n+2l}))\\
			&= -1 -\sum_{l=0}^{j-1} b^{2S-n}_{2l}\binom{2S-n+2j}{2S-n+2l}\frac{(2j-2l)!}{2^{j-l}(j-l)!} = b^{2S-n}_{2j}.
		\end{aligned}
		\]
	\end{proof}
	
	Hence, as defined in Equation \ref{interaction}, for a probability  $P \in \mathcal{M}(\mathds{X}_{n})$, we define its interaction with respect to $J_{3,S}^{n}$ as
	\begin{equation}\label{sizekdefn}
		J_{3,S}^{n}[P]:=\sum_{\pi \in J_{3,S}^{n}}\mu(\pi, \hat{\mathbf{1}}_{n})P_{\pi}=  P + \sum_{j=0}^{n-S}b^{2S-n}_{2j}P^{(2^{n-S -j}, 1^{2S -n+2j})}
	\end{equation}
	
	where $P^{(2^{J}, 1^{L})}:= \sum_{\shp(\pi)=(2^{J}, 1^{L}) }P_{\pi}$.
	
	In what follows, we prove recurrence relations for the terms $b_{j}^{a}$ that are useful for obtaining the marginals of the interaction defined by $J_{3,S}^{n}$.
	
	\begin{lemma}\label{recurrenceJ}
		The following identities hold:
		\[
		\begin{aligned}
			b_{j}^{a}+(a+j-1)b_{j-2}^{a}&=b_{j}^{a-1} &&(a\geq1,\ j\geq2),\\
			b_{2j}^{a}&=b_{2j+1}^{a-1} &&(a\geq1,\ j\geq0),\\
			b_{j}^{0}+(j-1)b_{j-2}^{0}&=b_{j-1}^{0} &&(j\geq2).
		\end{aligned}
		\]
	\end{lemma}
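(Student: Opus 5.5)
The plan is to read all three identities off the closed form in Theorem \ref{recursivej3lmain} instead of working with the recursions \eqref{oddtermsgeneral}--\eqref{eventermsgeneral}. By that theorem, which is an identity of entire functions, the series $\sum_{j\geq0}b^{a}_{j}x^{j}/(a+j)!$ equals $-e^{-x^{2}/2}E_{a}(x)$. Multiplying by $x^{a}$ gives
\[
G_{a}(x):=\sum_{j=0}^{\infty}\frac{b^{a}_{j}}{(a+j)!}x^{a+j}=-e^{-x^{2}/2}\Bigl(e^{x}-\sum_{c=0}^{a-1}\frac{x^{c}}{c!}\Bigr),
\]
so the coefficient of $x^{m}/m!$ in $G_{a}$ is $b^{a}_{m-a}$ (zero for $m<a$). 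Each recurrence will become a simple functional identity among the $G_{a}$, and we then compare Taylor coefficients.

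\emph{First and third identities: differentiate.} Since $\frac{d}{dx}\bigl(e^{x}-\sum_{c=0}^{a-1}x^{c}/c!\bigr)=e^{x}-\sum_{c=0}^{a-2}x^{c}/c!$, the product rule gives, for $a\geq1$,
\[
G_{a}'(x)=G_{a-1}(x)-xG_{a}(x).
\]
We compare coefficients of $x^{a+j-1}/(a+j-1)!$.
\begin{itemize}
\item On the left the coefficient is $b^{a}_{j}$.
\item In $G_{a-1}$ it is $b^{a-1}_{j}$.
\item In $xG_{a}$ it comes from the term $b^{a}_{j-2}x^{a+j-2}/(a+j-2)!$, which contributes $(a+j-1)b^{a}_{j-2}$ after rescaling. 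This requires $j\geq2$.
\end{itemize}
This yields $b^{a}_{j}+(a+j-1)b^{a}_{j-2}=b^{a}_{j-1}$ with the upper index shifted, i.e.\ exactly $b^{a}_{j}+(a+j-1)b^{a}_{j-2}=b^{a-1}_{j}$.

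For $a=0$ the sum is empty, so $G_{0}=-e^{x-x^{2}/2}$ and $G_{0}'=(1-x)G_{0}$. The same coefficient comparison at $x^{j-1}/(j-1)!$ gives $b^{0}_{j}=b^{0}_{j-1}-(j-1)b^{0}_{j-2}$ for $j\geq2$, which is the third identity.

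\emph{Second identity: use parity.} For $a\geq1$ we have directly
\[
G_{a-1}(x)=G_{a}(x)-e^{-x^{2}/2}\frac{x^{a-1}}{(a-1)!}.
\]
Take the coefficient of $x^{a+2j}$. In $G_{a-1}$ it is $b^{a-1}_{2j+1}/(a+2j)!$, and in $G_{a}$ it is $b^{a}_{2j}/(a+2j)!$. In the correction term it is the coefficient of $x^{2j+1}$ in the even function $e^{-x^{2}/2}$, which is $0$. Hence $b^{a}_{2j}=b^{a-1}_{2j+1}$ for all $j\geq0$.

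\emph{Main obstacle.} The only delicate point is bookkeeping: making sure the index shifts $x^{a+j}$ versus $x^{a-1+j}$ and the factorial rescalings produce exactly the stated factor $(a+j-1)$, and that the boundary ranges are right. Specifically, $j\geq2$ is needed so that $b_{j-2}$ is defined. The initial values $b^{a}_{0}=b^{a}_{1}=-1$ need no separate check, because Theorem \ref{recursivej3lmain} already encodes all coefficients, including $j=0,1$.
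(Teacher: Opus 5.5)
Your proof is correct. For the first and third identities it is the paper's argument: differentiate the closed form of Theorem \ref{recursivej3lmain} and compare coefficients. The paper differentiates $-e^{-x^{2}/2}E_{a}(x)$ directly, which produces the terms $E_{a-1}(x)/x-aE_{a}(x)/x$, and then multiplies by $x$. Your normalization $G_{a}(x)=x^{a}\sum_{j}b^{a}_{j}x^{j}/(a+j)!$ packages the same computation as $G_{a}'=G_{a-1}-xG_{a}$, which makes the index bookkeeping cleaner. The phrase ``$=b^{a}_{j-1}$ with the upper index shifted'' is just a slip, since your comparison yields $b^{a-1}_{j}$ directly.

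The genuine difference is the second identity. The paper proves $b^{a}_{2j}=b^{a-1}_{2j+1}$ by induction on $j$ from Equations \ref{oddtermsgeneral} and \ref{eventermsgeneral}. Since $\binom{(a-1)+2j+1}{(a-1)+2l+1}=\binom{a+2j}{a+2l}$, the recursion for the odd-indexed terms with upper index $a-1$ has the same coefficients and the same initial value $-1$ as the recursion for the even-indexed terms with upper index $a$. So the two sequences agree term by term, and this needs nothing beyond the definitions.

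You instead read the identity off $G_{a-1}=G_{a}-e^{-x^{2}/2}x^{a-1}/(a-1)!$ and the evenness of $e^{-x^{2}/2}$. This keeps the whole lemma within one generating-function framework and explains why only one parity class of indices matches. As a bonus, comparing coefficients of $x^{a+2j-1}$ gives the companion relation $b^{a-1}_{2j}=b^{a}_{2j-1}-(-1)^{j}(a+2j-1)!/\bigl(2^{j}j!\,(a-1)!\bigr)$ for $j\geq1$.
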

	
	\begin{proof}
		Taking the derivative of both sides of Equation \ref{recursivej3lmaineq} when $a\geq 1$,  we get that
		\[
		xe^{-x^{2}/2}E_{a}(x) - e^{-x^{2}/2}\left [\frac{E_{a-1}(x)}{x} - a\frac{E_{a }(x)}{x}  \right ]= \sum_{j=0}^{\infty}\frac{(j+1)b^{a}_{j+1}}{(a+j+1)!}x^{j}.
		\]
		Multiplying both sides by $x$ and using the  series expansion of Equation \ref{recursivej3lmaineq}, we obtain
		\[
		-\frac{b_{j-2}^{a}}{(a+j-2)!} + \frac{b_{j}^{a-1}}{(a+j-1)!} - a\frac{b_{j}^{a}}{(a+j)!}= j\frac{b_{j}^{a}}{(a+j)!},
		\]
		which implies the first equation. The equality $b_{2j}^{a}=b_{2j+1}^{a-1}$ holds when $j=0$, as both sides are equal to $-1$ by definition, and by induction  using Equations \ref{oddtermsgeneral} and \ref{eventermsgeneral} we get that
		\[
		b_{2j+1}^{a-1}= -1 -\sum_{l=0}^{j-1}b_{2l+1}^{a-1}\binom{a+2j}{a + 2l}\frac{(2j -2l)!}{2^{j-l}(j-l)!}= -1 -\sum_{l=0}^{j-1}b_{2l}^{a}\binom{a+2j}{a + 2l}\frac{(2j -2l)!}{2^{j-l}(j-l)!}= b_{2j}^{a}.
		\]
		The last equation follows by the same argument used for the first identity, so the proof is omitted.
	\end{proof}
	A small and useful reformulation of the identities of Lemma \ref{recurrenceJ} is that, for $2S>n$ and $j\geq1$,
	\begin{equation}\label{computemusefulbcoef}
		b_{2j}^{2S-n} + (2S-n +2(j-1) + 1)b_{2(j-1)}^{2S-n}= b_{2j}^{2S-n-1}= b_{2j}^{2(S-1)-(n-1)},
	\end{equation}
	For $2S=n$ and $j\geq1$, we instead have $b_{2j}^{0} + (2j-1)b^{0}_{2(j-1)}=b_{2j-1}^{0}=b^{1}_{2(j-1)}$.
	
	\begin{corollary} The following properties are satisfied:
		\begin{enumerate}
			\item[$i)$] The symmetric partition lattice $J_{k }^{n}$ has order $k$ and $J_{3, S}^{n}$ has order $3$.
			\item[$ii)$] For $n\geq4$ and $F\subseteq\{1,\ldots,n\}$ with $|F|=n-1$, we have $(J_{3,S}^{n}[P])_{F}=J_{3,S-1}^{n-1}[P_{F}]$ if $2S>n$, and $(J_{3,S}^{n}[P])_{F}=J_{3,S}^{n-1}[P_{F}]$ if $2S=n$. For $n=3$, the marginals of $J_{3,2}^{3}[P]$ are zero.
			
		\end{enumerate}
	\end{corollary}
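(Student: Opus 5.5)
For $i)$ I would argue directly from the definition of order. For $J_{k}^{n}$, every non-top shape in its integer-partition lower set has largest part at most $k-1$. The shape $(k-1,1^{n-k+1})$ is attained, so the maximum of $\alpha_{1}$ is $k-1$ and $|J_{k}^{n}|_{\infty}=k$. For $J_{3,S}^{n}$, the non-top shapes are $(2^{n-S-j},1^{2S-n+2j})$. Since $S\leq n-1$, the choice $j=0$ gives at least one part equal to $2$, so the maximal largest part is $2$ and the order is $3$.

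For $ii)$ I would imitate the marginalization argument of Corollary \ref{consequencesstreik}$(ii)$. By symmetry it suffices to take $F=\{1,\ldots,n-1\}$. The first step is a deletion identity for $P^{(2^{J},1^{L})}$. Removing the element $n$ from a partition of shape $(2^{J},1^{L})$ gives one of two things:
\begin{itemize}
\item if $n$ was a singleton, a partition of shape $(2^{J},1^{L-1})$, whose marginal term is unchanged;
\item if $n$ was paired with some $i$, a partition of shape $(2^{J-1},1^{L+1})$, and the marginal of $P_{\{i,n\}}$ becomes $P_{i}$.
\end{itemize}
A fixed partition of shape $(2^{J-1},1^{L+1})$ on $F$ arises in the second way in exactly $L+1$ ways, one for each singleton that $n$ can join. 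Hence
\[
(P^{(2^{J},1^{L})})_{F}=(P_{F})^{(2^{J},1^{L-1})}+(L+1)(P_{F})^{(2^{J-1},1^{L+1})},
\]
where terms with a negative exponent are zero. For the top element, $P_{F}=(P_{F})^{(n-1)}$.

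Next I would substitute this identity into Equation \ref{sizekdefn} and collect the coefficient of each shape $(2^{n-S-j},1^{2S-n+2j-1})$ on $n-1$ points, $0\leq j\leq n-S$. The term with index $j$ contributes through the singleton case, and the term with index $j-1$ contributes through the paired case with factor $L+1=2S-n+2(j-1)+1$. So the coefficient is
\[
b_{2j}^{2S-n}+(2S-n+2j-1)\,b_{2j-2}^{2S-n},
\]
with the second summand absent when $j=0$.

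When $2S>n$, Equation \ref{computemusefulbcoef} turns this coefficient into $b_{2j}^{2(S-1)-(n-1)}$, and at $j=0$ both sides are $-1$. The shapes $(2^{(n-1)-(S-1)-j},1^{2(S-1)-(n-1)+2j})$ are exactly the non-top shapes of $J_{3,S-1}^{n-1}$, and the constraint $\lceil (n-1)/2\rceil\leq S-1$ follows from $2S>n$. This gives $J_{3,S-1}^{n-1}[P_{F}]$.

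When $2S=n$, there is no singleton in the shape for $j=0$, so the $j=0$ term only feeds the paired case. The shapes produced on $n-1$ points are then $(2^{S-j-1},1^{2j+1})$. Their coefficient is $b_{2j+2}^{0}+(2j+1)b_{2j}^{0}$, which equals $b^{1}_{2j}$ by the remark after Equation \ref{computemusefulbcoef}. These are the coefficients of $J_{3,S}^{n-1}$, for which $2S-(n-1)=1$. I expect the index bookkeeping here to be the main obstacle: shifting $j$ correctly and checking the endpoint $j=n-S$, where the shape is all singletons.

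The case $n=3$, $S=2$ I would settle by direct computation. Here $J_{3,2}^{3}=\Pi_{3}$, whose interaction is the Streitberg, equivalently the Lancaster, interaction. Its two-dimensional marginals vanish by Theorem \ref{genStreitbergvectorspace}, since the order is $3$. Alternatively one can marginalize $P-\sum P_{\{i,j\}}\times P_{k}+2P_{1}\times P_{2}\times P_{3}$ explicitly.
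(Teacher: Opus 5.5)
Your proposal is correct and follows the paper's proof essentially step for step. Like the paper, you use the deletion identity $(P^{(2^{J},1^{L})})_{F}=(P_{F})^{(2^{J},1^{L-1})}+(L+1)(P_{F})^{(2^{J-1},1^{L+1})}$ and collect the coefficient $b_{2j}^{2S-n}+(2S-n+2j-1)b_{2j-2}^{2S-n}$, converting it via Equation \ref{computemusefulbcoef} (or the remark after it when $2S=n$), and you invoke Theorem \ref{genStreitbergvectorspace} for $n=3$; your direct argument for $i)$, which the paper leaves implicit, is also fine.
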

	
	\begin{proof} If $2S >n$, by an analysis similar to that in relation $ii)$ of Corollary \ref{consequencesstreik}, the following identity holds for $2j+l =n$, where $l \geq 1$, and any $F\subset \{1, \ldots, n\}$ with $|F|=n-1$
		\[
		(P^{(2^{j}, 1^{l})})_{F}= (P_{F})^{(2^{j}, 1^{l-1})}+ (l+1)(P_{F})^{(2^{j-1}, 1^{l+1})}, \quad \text{ for } j\geq 1, \quad \text{ and } (P^{(1^{n})})_{F}= (P_{F})^{( 1^{n-1})}.
		\]
		For instance, if $F=\{1, \ldots, n-1\}$, for a related partition, either the element $n$ is a singleton (which gives a unique extension of each resulting partition), or it belongs to a pair (which does not occur if $j=0$), and in this case, the other element of the pair will become a singleton after taking the marginal, and each resulting partition of $\Pi_{n-1}$ occurs $l+1$ times, once for each singleton.  Since
		\[
		(J_{3,S}^{n}[P])_{F} = P_{F} + \sum_{j=0}^{n-S}b^{2S-n}_{2j}(P^{(2^{n-S -j}, 1^{2S -n+2j})})_{F},
		\]
		this marginal relation and Equation \ref{computemusefulbcoef} lead to
		\[
		\begin{aligned}
			& \sum_{j=0}^{n-S}b^{2S-n}_{2j}(P^{(2^{n-S -j}, 1^{2S -n+2j})})_{F}\\
			&=b_{0}^{2S-n} (P_{F})^{(2^{n-S}, 1^{2S-n-1})} + \sum_{j=1}^{n-S}[b^{2S-n}_{2j}+(2S-n+2(j-1)+1)b^{2S-n}_{2(j-1)}](P_{F})^{(2^{n-S-j}, 1^{2S-n+2j-1})}\\
			&= b_{0}^{2S-n} (P_{F})^{(2^{n-S}, 1^{2S-n-1})} + \sum_{j=1}^{n-S}b^{2S-n-1}_{2j}(P_{F})^{(2^{n-S-j}, 1^{2S-n+2j-1})}\\
			&=\sum_{j=0}^{(n-1)-(S-1)}b^{2(S-1)-(n-1)}_{2j}(P_{F})^{(2^{(n-1)-(S-1)-j}, 1^{2(S-1)-(n-1)+2j})},
		\end{aligned}
		\]
		which directly implies our claim. If $2S=n$, then the same type of argument can be used with the equality mentioned after Equation \ref{computemusefulbcoef} and the fact that  $(P^{(2^{j})})_{F}= (P_{F})^{(2^{j-1}, 1)}$.	The case $n=3$ follows from Theorem \ref{genStreitbergvectorspace}. \end{proof}
	
	By an argument similar to that in Lemma \ref{computem}, we can show that $P^{(2^{j}, 1^{l})}(\mathds{X}_{n})= \binom{n}{l}\frac{(2j)!}{2^{j}j!}$.
	
	\section{Probability interactions related to characteristic functions and multivariate Gaussians}\label{MultivariateGaussian}
	
	In this section we connect zero probability interaction with functional equations involving characteristic functions. For $F\subseteq\{1,\ldots,n\}$, the vector $(t_{F},\vec0)$ agrees with $t$ on $F$ and is zero on $F^{c}$.
	
	\begin{lemma}\label{gaussiansum}
		Let  $T \subset \Pi_{n}$ be a lower set and let $P \in \mathfrak{M}(\mathbb{R}^{n})$ be a probability measure with Fourier transform $\hat{P} =f$. Then  $T[P ]=0$ if and only if the function $f$ satisfies the functional equation
		\[
		\sum_{\pi \in T} \mu(\pi, \hat{\mathbf{1}}_{n})f_{\pi}(t)=0 , \quad t \in \mathbb{R}^{n}
		\]
		where $f_{\pi}(t)= \prod_{i=1 }^{|\pi|}f(t_{F_{i}}, \vec{0})$.
	\end{lemma}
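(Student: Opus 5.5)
The plan is to use the injectivity of the Fourier transform on finite signed Borel measures on $\mathbb{R}^{n}$ together with linearity. The measure $T[P]=\sum_{\pi\in T}\mu(\pi,\hat{\mathbf1}_{n})P_{\pi}$ is a finite linear combination of probability measures, hence a finite signed measure. Consequently $T[P]=0$ if and only if $\widehat{T[P]}(t)=0$ for all $t\in\mathbb{R}^{n}$. By linearity,
\[
\widehat{T[P]}(t)=\sum_{\pi\in T}\mu(\pi,\hat{\mathbf1}_{n})\widehat{P_{\pi}}(t),
\]
so the statement reduces to the identity $\widehat{P_{\pi}}=f_{\pi}$ for every $\pi\in T$.

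To establish this identity, fix $\pi=(F_{1},\ldots,F_{\ell})$. Since $P_{\pi}=\bigtimes_{i=1}^{\ell}P_{F_{i}}$ and $\langle t,x\rangle=\sum_{i}\langle t_{F_{i}},x_{F_{i}}\rangle$, Fubini's theorem factors the transform of the product measure as $\widehat{P_{\pi}}(t)=\prod_{i=1}^{\ell}\widehat{P_{F_{i}}}(t_{F_{i}})$. It then remains to check that the transform of a marginal is the restriction of $f$: writing
\[
\widehat{P_{F}}(t_{F})=\int e^{i\langle t_{F},x_{F}\rangle}\,dP(x)=f(t_{F},\vec0),
\]
this follows from the change of variables formula for the pushforward under the projection $x\mapsto x_{F}$. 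The sign convention in the exponent is irrelevant here.

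Combining these two observations gives $\widehat{T[P]}=\sum_{\pi\in T}\mu(\pi,\hat{\mathbf1}_{n})f_{\pi}$. The forward implication is then immediate, and the converse follows from the uniqueness theorem for Fourier–Stieltjes transforms of finite signed measures. This is the only point that needs care, since uniqueness is usually stated for probability measures. To pass to the signed case, split $T[P]$ into its positive and negative parts via the Jordan decomposition. Each part is a finite positive measure, and the two parts have equal total mass because their transforms agree at $t=0$. After normalizing, the probability-measure uniqueness theorem shows the two parts are equal, so $T[P]=0$. I do not expect any genuine obstacle in this argument.
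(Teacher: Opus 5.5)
Your proposal is correct and follows the same route as the paper: factor $\widehat{P_{\pi}}$ as a product of marginal transforms, identify each factor with $f(t_{F_i},\vec{0})$, and conclude by uniqueness of the Fourier transform for finite signed measures. Your Jordan-decomposition reduction of the signed case to the probability case spells out a step the paper simply cites.
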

	
	\begin{proof} Indeed, if $\pi= (F_{1}, \ldots, F_{|\pi|})$, then
		\[
		\int_{\mathbb{R}^{n}} e^{-i\langle x, \xi \rangle } dP_{\pi}(\xi) = \prod_{i=1}^{|\pi|} \int_{\mathbb{R}^{|F_{i}|}} e^{-i\langle x_{F_{i}}, \xi_{F_{i}} \rangle } dP_{F_{i}}(\xi_{F_{i}}) = \prod_{i=1}^{|\pi|} \int_{\mathbb{R}^{n}} e^{-i\langle (x_{F_{i}}, \vec{0}), \xi \rangle } dP(\xi).
		\]
		Also, by uniqueness of the Fourier transform of finite measures a measure $\eta \in \mathfrak{M}(\mathbb{R}^{n})$ is the zero measure if and only if $\hat{\eta}$ is the zero function.
	\end{proof}
	
	A  function $f:\mathbb{R}^{n} \to \mathbb{C}$ is called positive definite (PD) if for every finite collection of distinct points $x_{1}, \ldots, x_{m} \in  \mathbb{R}^{n} $  and scalars $c_{1}, \ldots, c_{m} \in  \mathbb{C} $ we have
	\[
	\sum_{i,j=1}^{m}c_{i}\overline{c_{j}}f(x_{i} - x_{j}) \geq 0.
	\]
	Bochner's theorem, see \cite{Bochner1933}, states that if $f$ is continuous and $f(0)=1$ then the function $f$ is PD if and only if $f=\hat{P}$ for some probability  $P \in \mathfrak{M}(\mathbb{R}^{n})$.
	
	By Lemma \ref{gaussiansum} and Bochner's theorem, for a continuous positive definite function $f: \mathbb{R}^{n} \to \mathbb{C}$ with $f(0)=1$, we define the operator
	\[
	T[f](t):= \sum_{\pi \in T} \mu(\pi, \hat{\mathbf{1}}_{n})f_{\pi}(t),
	\]
	which is then the zero function if and only if  $T[P]=0$. Note that  $P=P_{\pi}$ for some $\pi \in \Pi_{n}\setminus\{\hat{\mathbf{1}}_{n}\}$ if and only if the function $f$ is what is usually called separable with respect to $\pi$, that is, $f = f_{\pi}$.

	Our aim in this section is to provide subsets of the space of positive definite functions on $\mathbb{R}^{n}$ for which we can characterize when $T[f]$ vanishes.
	
	We recall that a function  $G:\mathbb{R}^{n} \to \mathbb{C}$ is called conditionally negative definite (CND) if $G(-x)=\overline{G(x)}$ and for every finite collection of distinct points $x_{1}, \ldots, x_{m} \in  \mathbb{R}^{n} $  and scalars $c_{1}, \ldots, c_{m} \in  \mathbb{C} $, satisfying $\sum_{i=1}^{m}c_{i}=0$, we have
	\[
	\sum_{i,j=1}^{m}c_{i}\overline{c_{j}}G(x_{i} - x_{j}) \leq 0.
	\]
	It is known that if  $G: \mathbb{R}^{n} \to \mathbb{R}$ is a continuous CND function for which $G(0)=0$, the function $f(x)= e^{-G(x)}$ is a continuous PD function with $f(0)=1$, see Theorem 2.2 on page 75 of \cite{Berg1984}.
	
	For $\pi=(F_{1},\ldots,F_{|\pi|})$, write $G_{\pi}(x):=\sum_{i=1}^{|\pi|}G(x_{F_{i}},\vec0)$. Thus, when $f=e^{-G}$, we have $f_{\pi}=e^{-G_{\pi}}$.
	
	\begin{lemma}\label{mainlevy} Let  $G: \mathbb{R}^{n} \to \mathbb{R}$ be a continuous CND function for which $G(0)=0$ and let $T \subset \Pi_{n}$.  If for every    $\pi \in  T\setminus\{\hat{\mathbf{1}}_{n}\}$
		\[
		\lim_{\min_{1\leq i \leq n }|x_{i}| \to \infty  }G(x) -G_{\pi}(x)=-\infty,
		\]
		then it holds that $T[e^{-G}]$ is not identically zero.
	\end{lemma}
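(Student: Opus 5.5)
Argue by contradiction: suppose $T[e^{-G}](x)=0$ for all $x$. Since $\mu(\hat{\mathbf1}_{n},\hat{\mathbf1}_{n})=1$ and $f_{\pi}=e^{-G_{\pi}}$, this reads
\[
e^{-G(x)}=-\sum_{\pi\in T\setminus\{\hat{\mathbf1}_{n}\}}\mu(\pi,\hat{\mathbf1}_{n})e^{-G_{\pi}(x)},\qquad x\in\mathbb R^{n}.
\]
Multiplying by $e^{G(x)}$ (finite, since $G$ is real valued) gives
\[
1=-\sum_{\pi\in T\setminus\{\hat{\mathbf1}_{n}\}}\mu(\pi,\hat{\mathbf1}_{n})\,e^{G(x)-G_{\pi}(x)}.
\]
The idea is to let $x$ tend to infinity in every coordinate and show the right-hand side tends to $0$.

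By hypothesis, for each of the finitely many $\pi\in T\setminus\{\hat{\mathbf1}_{n}\}$ we have $G(x)-G_{\pi}(x)\to-\infty$ as $\min_{i}|x_{i}|\to\infty$, so $e^{G(x)-G_{\pi}(x)}\to0$. Take a sequence $x^{(m)}=(m,\ldots,m)$, for which $\min_i|x^{(m)}_i|=m\to\infty$. The right-hand side is a finite linear combination, with fixed integer coefficients $\mu(\pi,\hat{\mathbf1}_{n})$, of terms tending to $0$, so it tends to $0$. This contradicts the constant value $1$, and hence $T[e^{-G}]$ is not identically zero.

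The only point needing care is that $T\setminus\{\hat{\mathbf1}_{n}\}$ is finite and nonempty, since it contains $\hat{\mathbf0}_{n}$; even if it were empty, $T[e^{-G}]=e^{-G}>0$ and the claim is immediate. I do not expect a real obstacle: the argument needs no information about the Möbius coefficients beyond $\mu(\hat{\mathbf1}_{n},\hat{\mathbf1}_{n})=1$. Positive definiteness and the CND property are not used here; they serve only to make $e^{-G}$ a characteristic function, so that, via Lemma \ref{gaussiansum}, the conclusion translates into $T[P]\neq0$ for the corresponding probability $P$.
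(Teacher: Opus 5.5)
Your proof is correct and follows essentially the same route as the paper's. Both assume $T[e^{-G}]\equiv 0$, multiply by $e^{G(x)}$, and let $\min_i|x_i|\to\infty$, so that the finitely many non-top terms vanish and $\mu(\hat{\mathbf1}_{n},\hat{\mathbf1}_{n})=1$ would have to be $0$, a contradiction.
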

	
	\begin{proof} Indeed, let $f(x)= e^{-G(x)}$. Then,
		\[
		T[f](t)=\sum_{\pi \in T} \mu(\pi, \hat{\mathbf{1}}_{n})f_{\pi}(t)= \sum_{\pi \in T} \mu(\pi, \hat{\mathbf{1}}_{n})e^{-G_{\pi}(t)}.
		\]
		Note that by the hypothesis on $G$, for every non-top partition $\pi\in T$, $e^{-G_{\pi}(x) + G(x)} \to 0$ when $\min_{1\leq i \leq n }|x_{i}| \to \infty$. Hence, if $T[f]$ is identically zero, by multiplying it by $e^{G(x)}$ and letting $\min_{1\leq i \leq n }|x_{i}| \to \infty$ we get that  $\mu( \hat{\mathbf{1}}_{n}, \hat{\mathbf{1}}_{n})=0 $, which is a contradiction.
	\end{proof}
	
	To obtain examples using Lemma \ref{mainlevy}, consider the continuous functions $g:[0,\infty)\to\mathbb R$ with $g(0)=0$ such that $g(\|x-y\|^{2})$ is CND on every Euclidean space. These are precisely the Bernstein functions normalized by $g(0)=0$ \cite{schoenbradial,Schilling2012}. Each has a unique representation
	\begin{equation}\label{bernsteinrepresentation}
		g(t)=at+\int_{0}^{\infty}(1-e^{-rt})\frac{1+r}{r}\,d\sigma(r),
	\end{equation}
	where $a\geq0$ and $\sigma$ is a finite nonnegative measure on $(0,\infty)$. Set $G(x)=g(\|x\|^{2})$ and $d\nu(r)=(1+r)r^{-1}d\sigma(r)$. For a proper partition $\pi$, let $q=|\pi|$ and $t_{i}=\|x_{F_{i}}\|^{2}$. The integrand
	\[
	h_{r}(x):=\sum_{i=1}^{q}(1-e^{-rt_{i}})-(1-e^{-r\sum_{i} t_{i}})
	\]
	satisfies $0\leq h_{r}(x)\leq q-1$ and tends to $q-1$ when $\min_{j}|x_{j}|\to\infty$. Consequently, dominated convergence when $\nu((0,\infty))<\infty$, and Fatou's lemma otherwise, give
	\[
	\lim_{\min_{j}|x_{j}|\to\infty}[G(x)-G_{\pi}(x)]=(1-|\pi|)\int_{0}^{\infty}\frac{1+r}{r}d\sigma(r)=(1-|\pi|)\lim_{t\to\infty}[g(t)-at].
	\]
	The last equality follows by monotone convergence in Equation \ref{bernsteinrepresentation}. This gives the following characterization.
	
	\begin{theorem}\label{radialcndinfinity}
		Let $g$ be a Bernstein function with $g(0)=0$, and let $a$ be the coefficient in Equation \ref{bernsteinrepresentation}. Suppose either $g(t)=at$ for every $t\geq0$, or $\lim_{t\to\infty}[g(t)-at]=\infty$. For every lower set $T\subseteq\Pi_{n}$, the function $T[e^{-g(\|\cdot\|^{2})}]$ is identically zero if and only if $g(t)=at$.
	\end{theorem}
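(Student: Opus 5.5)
We prove the two implications separately. The easy direction is sufficiency. Suppose $g(t)=at$. Then $G(x)=a\|x\|^{2}=a\sum_{j=1}^{n}x_{j}^{2}$ is additive over coordinates, so $G_{\pi}=G$ for every partition $\pi$. Equivalently, $f=e^{-G}$ is the characteristic function of a product (centered Gaussian, or Dirac when $a=0$) measure $P=P_{\hat{\mathbf0}_{n}}$. Hence $f_{\pi}=f$ for all $\pi\in T$, and
\[
T[f]=f\sum_{\pi\in T}\mu(\pi,\hat{\mathbf1}_{n}),
\]
which vanishes by Equation \ref{soma01}, since $\hat{\mathbf0}_{n}\neq\hat{\mathbf1}_{n}$ when $n\geq2$. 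Alternatively, this follows from Lemma \ref{iducedzero} with $\sigma=\hat{\mathbf0}_{n}\in T$ together with Lemma \ref{gaussiansum}. (For $n=1$ the lower set is trivial, and I would note this degenerate case separately.)

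For necessity we argue by contraposition. Assume $g$ is not linear, so by hypothesis $\lim_{t\to\infty}[g(t)-at]=\infty$. We want to apply Lemma \ref{mainlevy} with $G(x)=g(\|x\|^{2})$, which is a continuous CND function with $G(0)=0$ by the cited Bernstein characterization. We therefore need, for every $\pi\in T\setminus\{\hat{\mathbf1}_{n}\}$,
\[
G(x)-G_{\pi}(x)\to-\infty \quad\text{as } \min_{j}|x_{j}|\to\infty.
\]
The computation preceding the theorem gives this limit as $(1-|\pi|)\lim_{t\to\infty}[g(t)-at]$. Since $\pi$ is proper, $|\pi|\geq2$, so the factor $1-|\pi|$ is negative and the limit is $-\infty$. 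Lemma \ref{mainlevy} then says $T[e^{-G}]$ is not identically zero, which is the contrapositive we want.

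The main obstacle is justifying the limit computation when $\int\frac{1+r}{r}d\sigma=\infty$. In that case dominated convergence fails and we only get a Fatou-type inequality. Fatou's lemma applies because $h_{r}\geq0$, and it gives
\[
\liminf\int h_{r}\,d\nu\geq\int\liminf h_{r}\,d\nu=\int (q-1)\,d\nu=\infty.
\]
The linear term also needs care. Since $\sum_{i}t_{i}=\|x\|^{2}$, the $at$ parts cancel exactly in $G-G_{\pi}$. This leaves
\[
G(x)-G_{\pi}(x)=-\int_{0}^{\infty}h_{r}(x)\,d\nu(r)\to-\infty,
\]
which is exactly what Lemma \ref{mainlevy} requires. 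I also need to check the pointwise claim $h_{r}(x)\to q-1$ for each fixed $r>0$. Each $t_{i}\geq\min_{j}|x_{j}|^{2}$, so every exponential term tends to zero, and this suffices.
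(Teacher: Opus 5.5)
Your proposal is correct and follows the paper's proof: sufficiency comes from $G_{\pi}=G$ together with Equation \ref{soma01}, and necessity comes from Lemma \ref{mainlevy} combined with the limit $(1-|\pi|)\lim_{t\to\infty}[g(t)-at]=-\infty$ for proper $\pi$, where your Fatou argument only spells out the computation the paper gives just before the theorem. Your caveat about $n=1$ is apt, since there $T=\{\hat{\mathbf1}_{1}\}$ and $T[f]=f\neq0$, so the statement tacitly requires $n\geq2$.
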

	
	\begin{proof} If $g(t)=at$, then
		\[
		G_{\pi}(x) = \sum_{i=1}^{|\pi|}g(\sum_{j \in F_{i}}x_{j}^{2}) = \sum_{i=1}^{|\pi|}a(\sum_{j \in F_{i}}x_{j}^{2})= a\|x\|^{2}= g(\|x\|^{2}).
		\]
		Hence $T[e^{-g(\|\cdot \|^{2})}](x)= e^{-g(\|x\|^{2})}\sum_{\pi \in T} \mu(\pi, \hat{\mathbf{1}}_{n})=0$  by Equation \ref{soma01}.  Conversely, if $\lim_{t \to \infty }(g(t)-at)=\infty$, Lemma \ref{mainlevy} and the preceding computation prove our claim.
	\end{proof}
	
	Examples of Bernstein functions that satisfy the condition of Theorem  \ref{radialcndinfinity} include $t^{\alpha}$, for $0<\alpha<1$ and $\log(t+1)$.
	
	For another example, the Gaussian probability measure $N(\omega,\Sigma)$ on $\mathbb R^{n}$ has Fourier transform $e^{-G_{\omega,\Sigma}(x)}$, where $G_{\omega,\Sigma}(x)=i\langle x,\omega\rangle+\langle\Sigma x,x\rangle/2$. For a partition $\pi$, define $\Sigma_{\pi}$ by keeping the entries of $\Sigma$ within each block of $\pi$ and setting the entries between different blocks to zero. Then $N(\omega,\Sigma)_{\pi}=N(\omega,\Sigma_{\pi})$.
	
	The identifiability theorem of \cite{10.1214/aoms/1177698520} motivates the following result.
	\begin{theorem}
		For every lower set $T\subseteq\Pi_{n}$, $T[N(\omega,\Sigma)]$ is the zero measure if and only if $\Sigma=\Sigma_{\pi}$ for some $\pi\in T\setminus\{\hat{\mathbf1}_{n}\}$.
	\end{theorem}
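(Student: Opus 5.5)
The reverse implication is immediate: if $\Sigma=\Sigma_{\sigma}$ for some $\sigma\in T\setminus\{\hat{\mathbf1}_{n}\}$, then $N(\omega,\Sigma)=N(\omega,\Sigma_{\sigma})=N(\omega,\Sigma)_{\sigma}$. Lemma \ref{iducedzero} then gives $T[N(\omega,\Sigma)]=0$. For the forward implication I will pass to characteristic functions via Lemma \ref{gaussiansum}. Since $\mu(\hat{\mathbf0}\ldots)$ etc. are real and $\langle x,\omega\rangle$ is additive over blocks, $G_{\pi}(x)=i\langle x,\omega\rangle+\langle\Sigma_{\pi}x,x\rangle/2$. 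The factor $e^{-i\langle x,\omega\rangle}$ is common to every term, so vanishing of $T[N(\omega,\Sigma)]$ is equivalent to
\[
\sum_{\pi\in T}\mu(\pi,\hat{\mathbf1}_{n})e^{-\langle\Sigma_{\pi}x,x\rangle/2}=0,\qquad x\in\mathbb R^{n}.
\]

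The key step is to group the partitions according to the quadratic form they produce. Distinct partitions may give the same matrix $\Sigma_{\pi}$. Let $Q_{1},\ldots,Q_{r}$ be the distinct quadratic forms $q_{\pi}(x)=\langle\Sigma_{\pi}x,x\rangle$, $\pi\in T$, and set $c_{j}=\sum_{\pi:\,q_{\pi}=Q_{j}}\mu(\pi,\hat{\mathbf1}_{n})$. The functions $e^{-Q_{j}/2}$ for distinct quadratic forms are linearly independent. To prove this, restrict to a generic line $x=sv$, where the values $Q_{j}(v)$ are pairwise distinct (this is possible since $Q_{j}-Q_{l}$ is a nonzero polynomial). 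The functions $e^{-s^{2}Q_{j}(v)/2}$ in $s$ are then linearly independent, by the standard dominant-exponent argument as $s\to\infty$. Hence every $c_{j}=0$. In particular, the class containing $\hat{\mathbf1}_{n}$, i.e.\ the class with $Q=\langle\Sigma x,x\rangle$, has zero total coefficient:
\[
\sum_{\pi\in T,\ \Sigma_{\pi}=\Sigma}\mu(\pi,\hat{\mathbf1}_{n})=0.
\]

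It remains to show that this sum can vanish only if it contains some $\pi\neq\hat{\mathbf1}_{n}$. The main obstacle is to make this rigorous, since cancellation could a priori occur within the class. Consider the set $A=\{\pi\in\Pi_{n}:\Sigma_{\pi}=\Sigma\}$. Since $\Sigma_{\pi}=\Sigma$ means that every nonzero entry $\Sigma_{ij}$ has $i,j$ in a common block of $\pi$, the set $A$ is an up-set $\{\pi\geq\rho\}$. Here $\rho$ is the partition into connected components of the graph on $\{1,\ldots,n\}$ with an edge $ij$ whenever $\Sigma_{ij}\neq0$. If $\rho=\hat{\mathbf1}_{n}$, the class is $\{\hat{\mathbf1}_{n}\}$ with total coefficient $1\neq0$, which is a contradiction. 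So $\rho\neq\hat{\mathbf1}_{n}$, and it remains to show $\rho\in T$. If $\rho\notin T$, then no proper $\pi\geq\rho$ is in $T$, because $T$ is a lower set. The class is then again just $\{\hat{\mathbf1}_{n}\}$, giving the same contradiction. Hence $\rho\in T\setminus\{\hat{\mathbf1}_{n}\}$ and $\Sigma=\Sigma_{\rho}$, as required. Taking the class sum over $T\cap\{\pi\geq\rho\}$ also matches Equation \ref{somax1}, which confirms consistency.
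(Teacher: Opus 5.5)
Your proof is correct and follows essentially the same route as the paper: group the terms of $T[N(\omega,\Sigma)]$ by covariance matrix and use linear independence to conclude that the class of $\Sigma$ must contain some proper $\pi\in T$, since otherwise its total coefficient is $\mu(\hat{\mathbf1}_{n},\hat{\mathbf1}_{n})=1$. The differences are that you prove the linear independence yourself, via characteristic functions restricted to a generic line, which also covers singular $\Sigma$ (the cited identifiability result is for nondegenerate Gaussians, so it does not literally apply there), and that your component partition $\rho$ is a correct but unnecessary refinement: the ``cancellation within the class'' is not an obstacle, because a class equal to $\{\hat{\mathbf1}_{n}\}$ sums to $1$.
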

	\begin{proof}
		If $\Sigma=\Sigma_{\pi}$ for such a partition, Lemma \ref{iducedzero} gives the result. Conversely, suppose $\Sigma\neq\Sigma_{\pi}$ for every proper $\pi\in T$. Group the terms in $T[N(\omega,\Sigma)]$ according to their distinct covariance matrices. The coefficient of $N(\omega,\Sigma)$ remains $\mu(\hat{\mathbf{1}}_{n}, \hat{\mathbf{1}}_{n})=1$, which is an absurd by the identifiability of the Gaussians.\end{proof}
	
    \bibliographystyle{siam} 
	\bibliography{References}       

@article{Bakirov2006,
  author = {Bakirov, Nail K. and Rizzo, Maria L. and Sz{\'e}kely, G{\'a}bor J.},
  year = {2006},
  title = {A multivariate nonparametric test of independence},
  journal = {Journal of Multivariate Analysis},
  volume = {97},
  pages = {1742--1756},
  doi = {10.1016/j.jmva.2005.10.005},
}

@book{Berg1984,
  author = {Berg, Christian and Christensen, Jens and Ressel, Paul},
  year = {1984},
  title = {Harmonic Analysis on Semigroups: Theory of Positive Definite and Related Functions},
  series = {Graduate Texts in Mathematics},
  volume = {100},
  publisher = {Springer},
}

@article{Bochner1933,
  author = {Bochner, S.},
  year = {1933},
  title = {Monotone {Funktionen}, {Stieltjessche Integrale} und harmonische {Analyse}},
  journal = {Mathematische Annalen},
  volume = {108},
  pages = {378--410},
}

@article{Boettcher2018,
  author = {B{\"o}ttcher, Bj{\"o}rn and Keller-Ressel, Martin and Schilling, Ren{\'e} L.},
  year = {2018},
  title = {Detecting independence of random vectors: Generalized distance covariance and {Gaussian} covariance},
  journal = {Modern Stochastics: Theory and Applications},
  volume = {5},
  pages = {353--383},
  doi = {10.15559/18-VMSTA116},
}

@article{Boettcher2019,
  author = {B{\"o}ttcher, Bj{\"o}rn and Keller-Ressel, Martin and Schilling, Ren{\'e} L.},
  year = {2019},
  title = {Distance multivariance: New dependence measures for random vectors},
  journal = {Annals of Statistics},
  volume = {47},
  pages = {2757--2789},
  doi = {10.1214/18-AOS1764},
}

@article{Chakraborty2019,
  author = {Chakraborty, Shubhadeep and Zhang, Xianyang},
  year = {2019},
  title = {Distance Metrics for Measuring Joint Dependence with Application to Causal Inference},
  journal = {Journal of the American Statistical Association},
  volume = {114},
  pages = {1638--1650},
  doi = {10.1080/01621459.2018.1513364},
}

@inproceedings{Gretton2005,
  author = {Gretton, Arthur and Bousquet, Olivier and Smola, Alex and Sch{\"o}lkopf, Bernhard},
  year = {2005},
  title = {Measuring statistical dependence with {Hilbert--Schmidt} norms},
  booktitle = {Algorithmic Learning Theory},
  pages = {63--77},
  publisher = {Springer},
}

@inproceedings{Gretton2008,
  author = {Gretton, Arthur and Fukumizu, Kenji and Teo, Choon H and Song, Le and Sch{\"o}lkopf, Bernhard and Smola, Alex J},
  year = {2008},
  title = {A kernel statistical test of independence},
  booktitle = {Advances in Neural Information Processing Systems},
  pages = {585--592},
  volume = {20},
  publisher = {Curran Associates, Inc.},
}

@article{Guella2025a,
  author = {Guella, Jean Carlo},
  year = {2025},
  title = {{Hilbert} space embeddings of independence tests of several variables with radial basis functions},
  journal = {Analysis and Applications},
  pages = {1--55},
  doi = {10.1142/S0219530526500089},
}

@article{Janson2021,
  author = {Janson, Svante},
  year = {2021},
  title = {On distance covariance in metric and {Hilbert} spaces},
  journal = {ALEA. Latin American Journal of Probability and Mathematical Statistics},
  volume = {18},
  pages = {1353--1393},
  doi = {10.30757/alea.v18-50},
}

@book{lancaster1969chi,
  author = {Lancaster, Henry Oliver},
  year = {1969},
  title = {The Chi-Squared Distribution},
  publisher = {John Wiley \& Sons},
}

@inproceedings{pmlr-v258-liu25f,
  author = {Liu, Zhaolu and Barahona, Mauricio and Peach, Robert},
  year = {2025},
  title = {Information-Theoretic Measures on Lattices for Higher-Order Interactions},
  booktitle = {Proceedings of The 28th International Conference on Artificial Intelligence and Statistics},
  series = {Proceedings of Machine Learning Research},
  volume = {258},
  pages = {2206--2214},
  publisher = {PMLR},
  editor = {Li, Yingzhen and Mandt, Stephan and Agrawal, Shipra and Khan, Emtiyaz},
}

@inproceedings{NEURIPS2023_74f11936,
  author = {Liu, Zhaolu and Peach, Robert and Mediano, Pedro A. M and Barahona, Mauricio},
  year = {2023},
  title = {Interaction Measures, Partition Lattices and Kernel Tests for High-Order Interactions},
  booktitle = {Advances in Neural Information Processing Systems},
  volume = {36},
  pages = {36991--37012},
  publisher = {Curran Associates, Inc.},
  editor = {Oh, A. and Naumann, T. and Globerson, A. and Saenko, K. and Hardt, M. and Levine, S.},
}

@book{Schilling2012,
  author = {Schilling, Ren{\'e} L. and Song, Renming and Vondra{\v c}ek, Zoran},
  year = {2012},
  title = {{Bernstein} Functions: Theory and Applications},
  volume = {37},
  publisher = {Walter de Gruyter},
  series = {De Gruyter Studies in Mathematics},
  edition = {2},
  doi = {10.1515/9783110269338},
}

@article{schoenbradial,
  author = {Schoenberg, Isaac J.},
  year = {1938},
  title = {Metric spaces and completely monotone functions},
  journal = {Annals of Mathematics},
  volume = {39},
  pages = {811--841},
  doi = {10.2307/1968466},
}

@inproceedings{NIPS2013_076a0c97,
  author = {Sejdinovic, Dino and Gretton, Arthur and Bergsma, Wicher},
  year = {2013},
  title = {A Kernel Test for Three-Variable Interactions},
  booktitle = {Advances in Neural Information Processing Systems},
  volume = {26},
  pages = {1124--1132},
  publisher = {Curran Associates, Inc.},
  editor = {Burges, C. J. and Bottou, L. and Welling, M. and Ghahramani, Z. and Weinberger, K. Q.},
}

@book{Stanley_2011,
  author = {Stanley, Richard P.},
  year = {2011},
  title = {Enumerative Combinatorics, Volume 1},
  series = {Cambridge Studies in Advanced Mathematics},
  publisher = {Cambridge University Press},
  edition = {2},
  doi = {10.1017/CBO9781139058520},
}

@article{streitberg1990lancaster,
  author = {Streitberg, Bernd},
  year = {1990},
  title = {{Lancaster} interactions revisited},
  journal = {The Annals of Statistics},
  pages = {1878--1885},
  volume = {18},
  number = {4},
  doi = {10.1214/aos/1176347885},
}

@article{Szekely2007,
  author = {Sz{\'e}kely, G{\'a}bor J. and Rizzo, Maria L. and Bakirov, Nail K.},
  year = {2007},
  title = {{Measuring and testing dependence by correlation of distances}},
  journal = {The Annals of Statistics},
  volume = {35},
  pages = {2769--2794},
  doi = {10.1214/009053607000000505},
}

@article{Szekely2009,
  author = {{Sz{\'e}kely}, G{\'a}bor J. and {Rizzo}, Maria L.},
  year = {2009},
  title = {{Brownian} distance covariance},
  journal = {The Annals of Applied Statistics},
  volume = {3},
  pages = {1236--1265},
  doi = {10.1214/09-AOAS312},
}

@article{10.1214/aoms/1177698520,
  author = {Yakowitz, Sidney J. and Spragins, John D.},
  year = {1968},
  title = {{On the Identifiability of Finite Mixtures}},
  journal = {The Annals of Mathematical Statistics},
  volume = {39},
  pages = {209--214},
  doi = {10.1214/aoms/1177698520},
  number = {1},
}

@article{Pfister2018,
  author = {Pfister, Niklas and B{\"u}hlmann, Peter and Sch{\"o}lkopf, Bernhard and Peters, Jonas},
  title = {Kernel-based tests for joint independence},
  journal = {Journal of the Royal Statistical Society: Series B (Statistical Methodology)},
  year = {2018},
  volume = {80},
  number = {1},
  pages = {5--31},
  doi = {10.1111/rssb.12235},
}
	
	\end{document}